\colorlet{purpleB70}{blue!70!red}
\colorlet{orangeR65}{red!65!yellow}
\definecolor{red2}{HTML}{d41173}
\definecolor{neongreen}{HTML}{1bf702}
\definecolor{radicalred}{HTML}{FF355E}
\definecolor{denim}{HTML}{1560BD}
\definecolor{darkcyan}{rgb}{0.0, 0.55, 0.55}
\definecolor{cilek}{HTML}{FF43A4}
\definecolor{mor}{HTML}{9F00C5}
\definecolor{phlox}{rgb}{0.87, 0.0, 1.0}
\definecolor{fluorescentpink}{HTML}{FF1493}
\definecolor{napiergreen}{rgb}{0.16, 0.5, 0.0}
\definecolor{kellygreen}{rgb}{0.3, 0.73, 0.09}
\definecolor{parisgreen}{HTML}{ 50C878 }
\definecolor{palatinateblue}{rgb}{0.15, 0.23, 0.89}
\definecolor{ceruleanblue}{rgb}{0.16, 0.32, 0.75}
\definecolor{brandeisblue}{rgb}{0.0, 0.44, 1.0}
\definecolor{KLMblue}{HTML}{0FC0FC}
\definecolor{cinnamon}{rgb}{0.82, 0.41, 0.12}
\definecolor{darkorange}{rgb}{1.0, 0.55, 0.0}
\definecolor{darktangerine}{rgb}{1.0, 0.66, 0.07}
\definecolor{deepcarrotorange}{rgb}{0.91, 0.41, 0.17}
\definecolor{internationalorange}{HTML}{FF4F00}
\definecolor{persimmon}{HTML}{EC5800}
\definecolor{pumpkin}{HTML}{FF7518}
\definecolor{darkred}{rgb}{1,0,0} 
\definecolor{darkgreen}{rgb}{0,0.7,0}
\definecolor{darkblue}{rgb}{0,0,1}
\def\reflb#1#2{\begingroup
    #2%
    \def\@currentlabel{#2}%
    \phantomsection\label{#1}\endgroup
}
\numberwithin{equation}{section}
\newtheorem{Theorem}{Theorem}
\numberwithin{Theorem}{section}
\newtheorem   {Lemma}[Theorem]{Lemma}
\newtheorem   {Proposition}[Theorem]{Proposition}
\newtheorem   {Corollary}[Theorem]{Corollary}
\theoremstyle {definition}
\theoremstyle {remark}
\newtheorem   {Remark}[Theorem]{Remark}
\newtheorem   {Example}[Theorem]{Example}
\def    \eps    {\epsilon}
\newcommand{\CH}{{\mathcal H}}
\newcommand{\CA}{{\mathcal A}}
\newcommand{\CC}{{\mathcal C}}
\newcommand{\CU}{{\mathcal U}}
\newcommand{\CV}{{\mathcal V}}
\newcommand{\CS}{{\mathcal S}}
\newcommand{\supp}{\operatorname{supp}}
\newcommand{\id}{{\mathit id}}
\newcommand{\const}{{\mathit const}}
\newcommand{\fa}{{\mathfrak a}}
\newcommand{\fb}{{\mathfrak b}}
\newcommand{\fh}{{\mathfrak h}}
\newcommand{\tOmega}{\tilde{\Omega}}
\newcommand{\tth}{\tilde{h}}
\newcommand{\tz}{\tilde{z}}
\newcommand{\tf}{\tilde{f}}
\newcommand{\tPsi}{\tilde{\Psi}}
\newcommand{\talpha}{\tilde{\alpha}}
\newcommand{\tH}{\tilde{H}}
\newcommand{\tG}{\tilde{G}}
\newcommand{\CB}{{\mathcal B}}
\newcommand{\CW}{{\mathcal W}}
\def    \F      {{\mathbb F}}
\def    \C      {{\mathbb C}}
\def    \R      {{\mathbb R}}
\def    \Z      {{\mathbb Z}}
\def    \N      {{\mathbb N}}
\def    \Q      {{\mathbb Q}}
\def    \T      {{\mathbb T}}
\def    \CP     {{\mathbb C}{\mathbb P}}
\def    \12     {{\frac{1}{2}}}
\def    \p      {\partial}
\def    \SH     {\operatorname{SH}}
\def    \SO     {\operatorname{SO}}
\def    \HF     {\operatorname{HF}}
\def    \vol     {\operatorname{vol}}
\newcommand    \htop  {\operatorname{h_{\scriptscriptstyle{top}}}}
\newcommand   \slope {\operatorname{\mathit{slope}}}
\newcommand   \rmax {r_{\max}}
\newcommand   \WW {\widehat{W}}
\newcommand \Quad {\R^n_{{\geq 0}}}
\newcommand \Tr  {\operatorname{Tr}}
\newcommand \dSBM {\operatorname{d_{\scriptscriptstyle{SBM}}}}
\begin{document}


\setlength{\smallskipamount}{6pt}
\setlength{\medskipamount}{10pt}
\setlength{\bigskipamount}{16pt}





\title [Barcode Growth for Toric-Integrable Systems]{Barcode Growth
  for Toric-Integrable Hamiltonian Systems}

\author[Erol Barut]{Erol Barut}
\author[Viktor Ginzburg]{Viktor L. Ginzburg}

\address{EB and VG: Department of Mathematics, UC Santa Cruz, Santa
  Cruz, CA 95064, USA} \email{ebarut@ucsc.edu} \email{ginzburg@ucsc.edu}

\subjclass[2020]{53D40, 37B40, 37J12, 37J55}

\keywords{Periodic orbits, Reeb flows, Floer homology, persistence modules, toric domains,
  symplectic toric manifolds}

\date{\today} 

\thanks{This work is partially supported by NSF grant DMS-2304206 and
  Simons Foundation grant MP-TSM-00002529}

\begin{abstract}
  We continue investigating the connection between the dynamics of a
  Hamiltonian system and the barcode growth of the associated Floer or
  symplectic homology persistence module, focusing now on completely
  integrable systems. We show that for convex/concave or real analytic
  toric domains and convex/concave or real analytic completely
  integrable Hamiltonians on closed toric manifolds the barcode has
  polynomial growth with degree (i.e., slow barcode entropy) not
  exceeding half of the dimension. This slow polynomial growth
  contrasts with exponential growth for many systems with sufficiently
  non-trivial dynamics. We also touch upon the barcode growth function
  as an invariant of the interior of the domain and use it to
  distinguish some open domains.
\end{abstract}

\maketitle



\tableofcontents

\section{Introduction}
\label{sec:intro}
In this paper we continue investigating the connection between the
dynamics of a Hamiltonian system and the barcode growth of the
associated Floer or symplectic homology persistence module. We show
that the barcode has at most polynomial growth when the system is
completely integrable in a very strong sense. Namely, for
convex/concave or real analytic toric domains and convex/concave or
real analytic completely integrable Hamiltonians on closed symplectic
toric manifolds the barcode has polynomial growth with degree (aka
slow barcode entropy) not exceeding half of the dimension. This
phenomenon contrasts with that the exponential growth rate (aka
barcode entropy) is connected to the topological entropy of the
system; see
\cite{CGG:Entropy,CGG:Growth,CGGM:Reeb,FeLS,Fe1,Fe2,GGM,Me}. In
dimensions, two or three the barcode entropy is equal to the
topological entropy. Thus the growth of the barcode reflects both
trivial (e.g., completely integrable with toric singularities) and
non-trivial dynamics of the system. This is the main result of the
paper.

Applications of persistent homology methods to symplectic dynamics was
pioneered in \cite{PS}; see also, e.g., \cite{PRSZ,UZ} for further
references. However, this aspect of the connection between the
symplectic/Floer homology barcode and dynamics beyond periodic is even
a more recently discovered phenomenon and it is not clear at the
moment how far this connection goes.  The filtered fixed point Floer
homology and symplectic homology cannot detect all dynamics
features. Indeed, Hamiltonian and Reeb pseudo-rotations have the same
structure of periodic orbits as ordinary ``rotations'' and hence the
same filtered homology, but can have very different (e.g., ergodic)
dynamics; see \cite{AK,Ci,CGG:Mult,CS,GG:PR,GG:PRvsR,JS,LRS,Ka:Izv}
for relevant results and constructions. Yet such examples are
extremely non-generic and it is hypothetically possible that
generically all or most of the dynamics is detectable, at least in
lower dimensions, by Floer theory even when it is interpreted in such
a narrow sense.

A related question going back to \cite{CFHW} which we touch upon here
is to what extent the Reeb dynamics on the boundary of a Liouville
domain is determined by the interior of the domain. For instance, it
is a standard fact, proved in various versions several times, that the
filtered symplectic homology is an invariant of the interior up to
exact symplectomorphisms; see, e.g., \cite{CFHW,Gu} and Proposition
\ref{prop:Inv1}. Thus, as a consequence of the results in
\cite{CGGM:Reeb}, in dimension four the topological entropy of the
Reeb flow on the boundary is an invariant of the interior up to exact
symplectomorphisms of the interior. To put the same idea more
broadly, symplectic dynamics invariants can be used to distinguish
open domains.

Furthermore, motivated by the results and constructions in
\cite{LRSV,Hu,Us-SBM}, we extend the definition of the barcode growth
function, essentially by continuity, to closed star-shaped domains in
$\R^{2n}$. (Note that one can also define the filtered symplectic
homology in this case, in an invariant way, but it is not clear in
what sense this homology would be a persistence module and how to
define its barcode; see Remark \ref{rmk:SH-C0}.) We do not know if the
growth function is an invariant of the interior, but it allows us to
define barcode entropy and slow barcode entropy which are
invariants. The key tool in the proof of invariance is the notion of
symplectic Banach--Mazur distance; see \cite{PRSZ,SZ-SBM,Us-SBM}. We
extend our upper bounds on the barcode growth function to
convex/concave toric domains in $\R^{2n}$ with non-smooth boundary. In
particular, we show that the slow entropy is bounded from above by
$n$. The Reeb flow on the boundary of a $C^\infty$-generic smooth
star-shaped domain in $\R^4$ has positive topological entropy; see
\cite{CDHR} and also \cite{CKMS}. Hence, as a byproduct of our main
results, we see that the interior of such a domain is not
symplectomorphic to the interior of a convex/concave, possibly
non-smooth, domain. The same is true in all dimensions when the Reeb
flow has a hyperbolic invariant set with positive topological entropy;
see \cite{CGGM:Reeb}. These theorems refine and generalize some of the
results in \cite{Hu} where a different dynamics invariant is
introduced to distinguish open domains.

The paper is organized as follows. In Section \ref{sec:results} we
briefly review relevant definitions, and state and discuss our main
results in further detail. Then the upper bound on the barcode growth
for convex/concave or real analytic completely integrable Hamiltonians
on closed toric symplectic manifolds (Theorem \ref{thm:toric-sympl})
is proved in Section \ref{sec:pf-toric-sympl}. The proof is quite
similar to the argument for toric domains but requires less machinery
and is technically simpler. In Section \ref{sec:prelim} we discuss
filtered symplectic homology and its properties from the perspective
of persistence modules. This material is quite standard, except the
case of non-smooth domains, although its treatment varies between
different sources. We closely follow \cite{CGGM:Reeb}.  We also touch
upon the question of invariance in Section \ref{sec:Inv} and this part
is mainly new. Finally, in Section \ref{sec:main-prfs} we prove the
upper bounds (Theorems \ref{thm:toric-Reeb} and
\ref{thm:toric-nonsmooth}) for the barcode growth for convex/concave,
not necessarily smooth, and real analytic toric domains. In the smooth
convex/concave case the argument shares some common ingredients with
the proofs in \cite{GH}.

\medskip\noindent{\bf Acknowledgements.} The authors are grateful to
Erman \c Cineli, Ba\c sak G\"urel, Jean Gutt, Leonid Polterovich,
Michael Usher and Jun Zhang for useful discussions and remarks.

\section{Main results}
\label{sec:results}

\subsection{Toric domains}
\label{sec:toric}
The definition of a convex/concave toric domain, while conceptually
well-established, varies subtly between different sources and is
sometimes difficult to track down specifically; see Remark
\ref{rmk:def-convex-concave}. In this section, we spell out the
definition suitable for our purposes and used throughout the paper.

Consider the standard action of the torus $\T^n$ on $\C^n=\R^{2n}$ by
rotations of the complex coordinate axes and let
$$
\mu\colon \C^n\to \Quad, 
\quad (z_1,\ldots,z_n)\mapsto
\pi\big(|z_1|^2,\ldots,|z_n|^2\big)
$$
be its moment map, where $\R^n_{\geq 0}\subset\R^n$ is the positive
quadrant.  We denote the intersection of the open positive quadrant
$\R^n_{>0}$ with the unit sphere $S^{n-1}\subset \R^n$ by $\Delta_+$
and by $\bar{\Delta}_+=\R^n_{\geq 0}\cap S^{n-1}$ its closure. Let
$f\colon \bar{\Delta}_+\to (0,\infty)$ be a continuous function. Note
that $f$ extends to a positive continuous function on a neighborhood
of $\bar{\Delta}_+$ in $S^{n-1}$. Let $(r,\theta)$, where
$r\in [0,\infty)$ and $\theta\in S^{n-1}$, be the standard polar
coordinates on $\R^n$. Set
\begin{equation}
  \label{eq:Omega}
\Omega_f:=\{(r,\theta)\in\R^n\mid r\leq f(\theta), \,\theta\in
\bar{\Delta}_+\}\subset \R^n_{\geq 0}
\end{equation}
and
$$
\p\Omega_f:=\{(r,\theta)\mid r= f(\theta), \,\theta\in
\bar{\Delta}_+\}.
$$
Clearly, $\Omega_f$ is a star-shaped domain in the positive quadrant. By
definition, a \emph{toric} domain in $\R^{2n}$ is the
$\T^{2n}$-invariant closed star-shaped domain
$$
W_f=\mu^{-1}(\Omega_f)\subset \R^{2n}.
$$
Sometimes, we will use the notation $\Omega$ and $W$ or $W_\Omega$
when the function $f$ is clear from the context.

We say that $W_f$ is \emph{smooth} when $f$ is smooth, i.e., $f$
extends to a smooth function on a neighborhood of $\bar{\Delta}_+$ in
$S^{n-1}$. Then
$$
\p W_f=\mu^{-1}(\p \Omega_f)
$$
is a smooth hypersurface (with corners) in $\R^{2n}$ and the lines
through the origin are never tangent to $\p W_f$.  Likewise, $W_f$ is
\emph{real analytic} if $f$ admits a real analytic extension to a
small neighborhood of $\bar{\Delta}_+$ in $S^{n-1}$. Then $\p W_f$ is
a real analytic hypersurface in $\R^{2n}$.

The Reeb flow on $\p W_f$, to the extent it is defined (e.g., when
$W_f$ is smooth), is \emph{completely integrable} in a very strong
sense with first integrals given by the coordinates $x_i$ on $\R^n$
or, to be more precise, by the pull-backs $x_i\circ\mu$. (Complete
integrability is understood here as that the functions $x_i\circ\pi$
are in involution, invariant under the flow and independent almost
everywhere on $\p W_f$; see \cite[Sec.\ 49]{Ar}.) Moreover, by
construction, these integrals generate a Hamiltonian
$\T^n$-action. Hence, an addition, the set where these functions fail
to be independent has a rather simple structure. To be more precise,
in our case the integrals have toric singularities.

Somewhat misleadingly, we say that $W_f$ is \emph{convex} if for some
extension of $f$ (still denoted by $f$) to a small neighborhood $U$ of
$\bar{\Delta}_+$ in $S^{n-1}$ the set
\begin{equation}
  \label{eq:convex-ext}
\{(r,\theta)\mid r\leq f(\theta), \,\theta\in
U\}\subset \R^n
\end{equation}
is convex. Note that then the closed domains $\Omega_f$ and $W_f$ are
automatically convex, but our convexity requirement is somewhat
stronger than just that $\Omega_f$ and $W_f$ are convex; see Example
\ref{ex:toric-not-smooth}. We say that $W_f$ is
\emph{concave} if for some extension of $f$ to a small neighborhood
$U$ of $\bar{\Delta}_+$ in $S^{n-1}$ the set
\begin{equation}
  \label{eq:concave-ext}
\{(r,\theta)\mid r\geq f(\theta), \,\theta\in
U\}\subset \R^n
\end{equation}
is convex. (Note that the inequalities in \eqref{eq:convex-ext} and
\eqref{eq:concave-ext} go in the opposite directions. In this case, we
also say that the sets $\Omega_f$ and \eqref{eq:convex-ext} are
concave.)  Then the set $\R^n\setminus \Omega_f$ is automatically
convex, but again our requirement is more restrictive.  The domain
$W_f$ is said to be \emph{smooth convex/concave} if in these
definitions the extension of $f$ can be taken smooth.

\begin{Remark}
  \label{rmk:def-convex-concave}
  It is worth keeping in mind that, as we have pointed out, there are
  in circulation several slightly different definitions of
  convex/concave domains.  For instance, our definition of convex
  toric domains is more general than the ones from, say, \cite{GH} and
  \cite{Hu:GT16} for $n=2$ and slightly less general than the
  definition in \cite{CG} for $n=2$. (When $n=2$, it would become
  equivalent to the latter if we simply required $\Omega_f$ to be
  convex and dropped the condition that $f$ extends to a neighborhood
  $U$ of $\bar{\Delta}_+$ so that the extended domain,
  \eqref{eq:convex-ext}, is still convex.) Our definition of concave
  domains is slightly more restrictive than the ones in
  \cite{CG,GH,Hu:GT16} due to again the extension condition.
\end{Remark}

\begin{Example}
  \label{ex:toric-not-smooth}
  Let $D\subset \R^2_{\geq 0}$ be a closed disk tightly rolled inside
  the positive quadrant, i.e., such that the boundary circle $\p D$ is
  tangent to both coordinate axes. The tangency points devide the
  circle $\p D$ into two arcs: the smaller arc $\Gamma_-$ which is
  closer to the origin and the larger arc $\Gamma_+$ which is farther
  away. The arc $\Gamma_+$ bounds a convex subset $\Omega_+$ in
  $\R^2_{\geq 0}$ and $W_+=\mu^{-1}(\Omega_+)$ is a toric
  domain. However, according to our definitions $W_+$ is not smooth.
  Nor is it convex even though it is a convex subset of $\R^{2n}$. In
  a similar vein, the complement to the region $\Omega_-$ bounded by
  $\Gamma_-$ in the positive quadrant is convex. However, the toric
  domain $W_-=\mu^{-1}(\Omega_-)$ is neither smooth nor concave
  according to our definition.
\end{Example}

\subsection{Barcode growth for toric-integrable Reeb flows}
\label{sec:results-Reeb}
Let $(W,\alpha)$ be a Liouville domain.  Fix a ground field $\F$.
Usually, we suppress $\F$ and $\alpha$ in the notation. The
(non-equivariant) filtered symplectic homology spaces $\SH^s(W)$ over
$\F$ form a persistence module $\SH(W)$ parametrized by the action
$s\in\R$. Referring the reader to Section \ref{sec:prelim} for
details, here we only point out that while the definition of
$\SH^s(W)$ is quite standard when $s$ is not in the action spectrum
$\CS(\alpha)$; the case of $s\in\CS(\alpha)$ requires some
attention. The grading of the symplectic homology is inessential for
our purposes and we treat $\SH^s(W)$ as an ungraded vector space over
$\F$.

Denote by $\fb_\eps(W,s)$ the number of bars of length greater than
$\eps>0$ in the barcode of this persistence module beginning in the
range $[0,\, s)$; see Section \ref{sec:persistence-def}. This
invariant of persistence modules of geometric nature was first
considered in \cite{CSEHM} and then used repeatedly as a reflection of
the geometrical and dynamics features of the underlying problem. (See,
e.g., \cite{BHPW, BP3S2} and references therein in addition to the
works on barcode entropy cited in the introduction.)  We will write
$\fb_\eps(s)$ when $(W,\alpha)$ is clear from the context and refer to
$\fb_\eps(s)$ as the \emph{barcode (growth) function}. This is an
increasing function in $s$ and $1/\eps$, locally constant as a
function of $s$ in the complement to $\CS(\alpha)\cup \{0\}$.

Here we are interested in the growth rate of $\fb_\eps(s)$ when the
Reeb flow of $\alpha$ is toric-integrable. Namely, in this section we
will assume that $W=W_f$ is a smooth toric domain. Then, as we have
already mentioned, the Reeb flow on $\p W$ is defined and completely
integrable in a very strong sense.  Under suitable technical
additional conditions on $\p \Omega$, this translates to strong
restrictions on $\SH(W)$ and the growth of $\fb_\eps(s)$. Namely, we
have the following.

\begin{Theorem}
  \label{thm:toric-Reeb}
Assume that $W_\Omega$ is real analytic, or smooth and convex or concave.
Then
\begin{equation}
  \label{eq:toric-Reeb}
\fb_\eps(s)\leq C_n(\Omega) s^n + C_0(\Omega)
\end{equation}
for all $s$, where the constants $C_n(\Omega)$ and $C_0(\Omega)$
are independent of $\eps>0$ and, of course, $s$.
\end{Theorem}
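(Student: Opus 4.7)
The plan is to bound $\fb_\eps(s)$ by a count of Morse--Bott families of $1$-periodic orbits of a toric-symmetric Hamiltonian, and to show that this count grows polynomially of degree $n$ in $s$ because of the toric structure. First I would compute $\SH^{<s}(W_\Omega)$ via a cofinal family of autonomous, $\T^n$-invariant Hamiltonians $H = h\circ \mu$ pulled back from radial profiles $h\colon \R^n_{\geq 0}\to \R$ that vanish on a slightly shrunk copy of $\Omega$ and have slope of order $s$ near $\p\Omega$. Because $H$ depends only on $\mu$, every $1$-periodic orbit of $X_H$ sits on a moment-map fiber $\mu^{-1}(\theta)$ with $\nabla h(\theta) = k$ for some $k \in \Z^n_{\geq 0}$, and the orbit sweeps out the entire toric fiber. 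Hence the orbits are organized into Morse--Bott critical submanifolds, each diffeomorphic to a torus of dimension at most $n$ and of action essentially $\langle k,\theta\rangle - h(\theta) \approx h_\Omega(k)$, where $h_\Omega$ is the support function of $\Omega$ (in the concave case the analogous role is played by the gauge function of the convex complement).

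Counting these families uses the convexity or concavity of $\Omega$: for each primitive $k \in \Z^n_{\geq 0}$ the Gauss map of $\p\Omega$ selects a unique $\theta(k) \in \p\Omega$ whose outward normal is parallel to $k$, so the action of the associated family equals $h_\Omega(k) = \langle k,\theta(k)\rangle$. Since $h_\Omega$ is positively $1$-homogeneous and bounded away from zero in directions inside $\bar{\Delta}_+$, the lattice-point estimate $\#\{k \in \Z^n_{\geq 0} : h_\Omega(k) \leq s\} \leq C(\Omega)\,s^n$ holds, and summing over multiple covers (which amounts to allowing all, not merely primitive, $k$) preserves the polynomial order. In the real analytic case I would instead stratify $\p\Omega$ into finitely many real analytic pieces using Lojasiewicz-type finiteness: on each stratum the Gauss map is either injective, giving a lattice-style bound, or has constant image, contributing a single higher-dimensional family per resonance, and the finiteness of strata keeps the total orbit count of degree at most $n$.

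Finally, to pass from the family count to a bar count, I would add a small time-dependent perturbation $\delta K$ with $\delta < \eps/10$: by standard Morse--Bott Floer theory, each torus family of dimension $d \leq n$ breaks into at most $2^d \leq 2^n$ nondegenerate $1$-periodic orbits whose actions lie in a window of width $O(\delta) < \eps$. Pairings internal to a single family therefore produce only bars of length below $\eps$, which are discarded by $\fb_\eps$, so each family contributes at most $2^n$ bars of length $\geq \eps$. Combining this with the count above yields $\fb_\eps(s)\leq 2^n C(\Omega)\,s^n + C_0(\Omega)$, the additive constant absorbing bars coming from the origin and low-action boundary features independent of $s$. The main technical obstacle I anticipate is making the Morse--Bott perturbation argument rigorous on the lower-dimensional strata of $\p\Omega$ where the toric fibers degenerate and in the non-smooth convex/concave case where the Reeb flow on $\p W_\Omega$ is not a priori defined; I would handle this by first replacing $W_\Omega$ with a smooth approximation using the invariance framework of Section~\ref{sec:Inv} and then cutting off the Hamiltonian near neighbourhoods of the singular tori, controlling the additional bars this introduces by a constant depending only on $\Omega$.
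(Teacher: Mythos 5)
Your overall strategy---bounding $\fb_\eps(s)$ by the number of generators of a symplectic homology complex for a $\T^n$-invariant Hamiltonian, and counting Morse--Bott torus families through a Gauss-map/lattice-point argument of degree $n$---is the same as the paper's. The serious problem is that your count only sees the orbits lying over the interior of the moment image: the condition $\nabla h(\theta)=k\in\Z^n_{\geq 0}$ and the bijection $k\mapsto\theta(k)$ describe the $n$-dimensional fibers, whereas the fibers over the open faces of $\p\R^n_{\geq 0}$ are lower-dimensional tori whose closed orbits are governed by the projection of the normal direction to the face, not by an integer gradient. These orbits cannot be pushed into the constant term: for an ellipsoid \emph{all} closed Reeb orbits lie over the boundary faces (indeed over the edges/vertices), and the number of such orbit families with action below $s$ grows like $s^{d}$ for a $d$-dimensional face. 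So your proposed fix of ``cutting off the Hamiltonian near the singular tori'' and absorbing the discrepancy into $C_0(\Omega)$ fails. The correct repair---and what the paper does---is to run the identical Gauss-map/lattice count separately for each open face $\Delta$ of $\bar{\Delta}_+$, using the Gauss map of $\p\Omega\cap V_\Delta$ inside $V_\Delta$, which yields $O(s^{\dim\Delta})$ Morse--Bott families per face, each splitting into at most $2^{\dim\Delta}$ generators. Relatedly, the uniqueness of $\theta(k)$ and the Morse--Bott nondegeneracy you invoke require \emph{strict} convexity, which the hypothesis does not provide; one must first pass to a strictly convex $C^\infty$-small perturbation of $\Omega$ and check that the constants can be chosen uniformly in the perturbation, since a flat piece of $\p\Omega$ with rational normal gives a degenerate continuum of tori already in the convex case. (Your intermediate claim that pairings internal to one family only create bars shorter than $\eps$ is also unjustified---bars may pair generators from different families---but it is unnecessary: the number of bars beginning below $s$ is bounded by the number of generators of the slope-$s$ complex.)

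The real analytic case has a further gap. The asserted dichotomy---that on each stratum the Gauss map is either injective or has constant image---is false in general: a real analytic Gauss map can be finite-to-one but non-injective on open sets, and on positive-dimensional critical strata its image is typically positive-dimensional rather than a point. More importantly, nothing in your sketch excludes a rational direction being a critical value of the Gauss map, in which case the corresponding orbit set is not Morse--Bott and the ``$2^d$ nondegenerate orbits per family'' step has nothing to apply to, nor is the number of families a priori finite in the relevant action window. The paper addresses both points: it perturbs $\Omega$ by rotations arbitrarily close to the identity, constructed inductively over the faces so that for every face no rational point of $S^{d-1}_\Delta$ is a critical value of the (perturbed) Gauss map, and it then uses the compactness/local-degree bound for real analytic maps to obtain a uniform bound $N$, determined by $\Omega$ alone, on the number of preimages of any regular value; the constants become $N2^nC'$ and $N2^nC''$. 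Some perturbation of this kind, or a correct finiteness statement replacing your dichotomy, is needed for your argument to close. Finally, the non-smooth convex/concave situation you raise at the end is not part of this theorem; it is Theorem \ref{thm:toric-nonsmooth} and is treated there by smooth approximation together with the invariance discussion.
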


The theorem is proved in Section \ref{sec:pf-toric-Reeb} by, roughly
speaking, bounding from above the number of generators in the
underlying Floer complex. Furthermore, in many situations the constants
$C_n(\Omega)$ and $C_0(\Omega)$ can be determined quite explicitly.

This constraint on the behavior of $\fb_\eps(s)$ also provides an
upper bound on the growth of another invariant, measuring the growth
of the filtered homology or to some extent the concentration of
bars. Namely, set
$$
\fh(s)=\sup_{s'<s}\dim \SH^{s'}(W)\leq \infty.
$$
Clearly,
$$
\fh(s)\leq \sup_{\eps>0} \fb_\eps(s).
$$
Hence, since the constants $C_n(\Omega)$ and $C_0(\Omega)$ in
\eqref{eq:toric-Reeb} are independent of $\eps$, we arrive at the
following.
\begin{Corollary}
  \label{cor:toric-Reeb}
In the setting of Theorem \ref{thm:toric-Reeb}, 
$$
\fh(s)\leq C_n(\Omega) s^n + C_0(\Omega) 
$$
for all $s$, where the constant $C_n(\Omega)$ and $C_0(\Omega)$ are
independent of $s$.
\end{Corollary}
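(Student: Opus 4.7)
The plan is to deduce the corollary directly from Theorem \ref{thm:toric-Reeb} via the chain $\fh(s) \leq \sup_{\eps>0}\fb_\eps(s) \leq C_n(\Omega) s^n + C_0(\Omega)$ indicated in the text. The key point, which makes the argument work at all, is that the constants $C_n(\Omega)$ and $C_0(\Omega)$ furnished by Theorem \ref{thm:toric-Reeb} do not depend on $\eps$, so they survive passage to the supremum on the left.

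For the first inequality I would invoke the structure theorem for persistence modules of finite type: for each $s' \notin \CS(\alpha)\cup\{0\}$ with $s' < s$, the dimension $\dim \SH^{s'}(W)$ equals the number of bars $[a,b)$ in the barcode of $\SH(W)$ covering $s'$, that is, satisfying $a \leq s' < b$. Any such bar has positive length $b-a$, and if its birth satisfies $a \in [0, s)$ then it is counted by $\fb_\eps(s)$ as soon as $\eps < b-a$; taking the supremum over $\eps > 0$ therefore captures all such bars. The remaining contribution to $\dim \SH^{s'}(W)$ comes from bars born at $a < 0$, which correspond to the persistent (i.e., topological) part of $\SH(W)$; their number is a finite invariant of $W$ and can be absorbed into $C_0(\Omega)$ at the cost of enlarging the constant.

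The second inequality is then Theorem \ref{thm:toric-Reeb} applied for each fixed $\eps > 0$, combined with the tautological observation that the supremum of an $\eps$-independent expression is the expression itself. Chaining the two inequalities yields the stated bound. There is no substantive obstacle here: the corollary is a direct bookkeeping consequence of the main theorem, and the only mildly delicate point is the handling of bars with birth time in $(-\infty, 0)$, which is immediate from the standard shape of the symplectic homology persistence module of a Liouville domain and enters only as an additive topological constant.
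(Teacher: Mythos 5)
Your proposal is correct and is essentially the paper's own argument: the paper deduces the corollary exactly via the chain $\fh(s)\leq \sup_{\eps>0}\fb_\eps(s)\leq C_n(\Omega)s^n+C_0(\Omega)$, the whole point being that the constants in Theorem \ref{thm:toric-Reeb} are independent of $\eps$ and so survive the supremum. The only extraneous step is your treatment of bars born at negative filtration values: with the paper's normalization $\SH^s(W)=0$ for $s<0$ (condition \ref{PM4}), no such bars occur, so no enlargement of $C_0(\Omega)$ is needed.
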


It is useful to contrast Theorem \ref{eq:toric-Reeb} with the results
on the growth of $\fb_\eps(s)$ for some Reeb flows $\varphi^t_\alpha$
with ``interesting'' dynamics. Assume that $M=\p W$ is smooth and set
$\log^+:=\max\{0,\log\}$ where the logarithm is taken base two. Then,
the \emph{$\eps$-barcode entropy} of $W$ and the \emph{barcode entropy} of $W$ are
defined as
\begin{equation}
  \label{eq:hbar}
  \hbar_\eps(W):=\limsup_{s\to\infty}\frac{\log^+\fb_\eps(s)}{s} \textrm{ and } \hbar(W):=\lim_{\eps\to
    0+}\hbar_\eps(W).
\end{equation}
We have
\begin{equation}
  \label{eq:hbar<htop}
  \hbar(W)\leq \htop(\varphi_\alpha),
\end{equation}
where $\htop$ stands for the topological entropy; see \cite{FeLS} and
also \cite{CGGM:Reeb}. In particular, $\hbar(W)<\infty$. Furthermore,
as is proved in \cite[Thm.\ B]{CGGM:Reeb}, $\fb_\eps(s)$ grows
exponentially fast whenever the flow has a closed hyperbolic invariant
set $K$ with positive topological entropy $\htop(K)$. To be more
precise,
\begin{equation}
  \label{eq:hbar>htop}
\hbar_\eps(W)\geq
\htop(\varphi_\alpha|_K)
\end{equation}
for all sufficiently small $\eps>0$. Furthermore, when $\dim M=3$, we have
\begin{equation}
  \label{eq:hbar=htop}
  \hbar(W)
=\htop(\varphi_\alpha) 
\end{equation}
by \cite[Cor.\ C]{CGGM:Reeb}. Identity \eqref{eq:hbar=htop} is a
consequence of \eqref{eq:hbar>htop}, \eqref{eq:hbar<htop} and the
results from \cite{LY,LS} showing that in dimension three all of
topological entropy comes from hyperbolic invariant sets and thus
extending the work of Katok, \cite{Ka}, for diffeomorphisms of
surfaces to flows. In particular, $\fb_\eps(s)$ grows exponentially
fast for all small $\eps>0$ when $\htop(\varphi_\alpha)>0$ and
$\dim M=3$.

The polynomial growth rate
\begin{equation}
  \label{eq:hbar-slow}
\hbar^{\scriptstyle{Slow}}(W):=\lim_{\eps\to 0+}\limsup_{s\to
  \infty}\frac{\log^+\fb_\eps(s)}{\log s}
\end{equation}
should be interpreted as the \emph{slow barcode entropy} of
$\varphi_\alpha$; cf., e.g., \cite{FrLS,KT}. Thus, in the setting of
Theorem \ref{eq:toric-Reeb}, $\hbar^{\scriptstyle{Slow}}(W)\leq n$ while
$\hbar^{\scriptstyle{Slow}}(W)=\infty$ when \cite[Thm.\ B,
Cor.\ C]{CGGM:Reeb} apply. It is not hard to see that in the latter case we
have $\fh(s)$ growing at least exponentially, with a lower bound
$\sup_\eps\fb_\eps(s)/s$ which may grow super-exponentially.

As is shown in \cite{CDHR} (see also \cite{CKMS})), a
$C^\infty$-generic Reeb flow in dimension three has positive
topological entropy. This is an extension of similar results from
\cite{KLCN,LCS} for Hamiltonian diffeomorphisms of surfaces to Reeb
flows. Therefore, in dimension three, $\hbar>0$ and $\fb_\eps(s)$ for
small $\eps>0$ grows exponentially fast $C^\infty$-generically by
\cite[Cor.\ C]{CGGM:Reeb}.  We will return to Theorem
\ref{thm:toric-Reeb} and relevant questions and conjectures in
Section~\ref{sec:results-discuss}.

\subsection{Barcode growth for non-smooth convex/concave domains}
\label{sec:results-nonsmooth}
Throughout this section we will assume that $W$ is a bounded closed
star-shaped domain in $\R^{2n}$ with continuous but not necessarily
smooth boundary. In other words, by definition, $W$ is given by the
condition $r\leq f(\theta)$ where $f\colon S^{2n-1}\to (0,\infty)$ is
continuous but not necessarily smooth.

One could extend the definition of the filtered symplectic homology of
$W$ to this setting by approximating $W$ by star-shaped domains with
smooth boundary but it is unclear if this homology would form a
persistence module in the sense of the definition adopted here (
Section \ref{sec:persistence}) and how to define its barcode; see
Remark \ref{rmk:SH-C0}. Instead, it is sufficient for our purposes to
have the function $\fb_\eps(s)$ defined essentially by continuity; see
Section \ref{sec:C0} and, in particular, \eqref{eq:fb-C0}. Once the
growth function $\fb_\eps(s)$ is defined, the definitions of barcode
entropy and slow barcode entropy carry over word-for-word.

Assume next that $W$ is a convex/concave toric domain as defined in
Section \ref{sec:toric}. Abusing notation, we will write $f$ for both
$f\colon \bar{\Delta}_+\to (0,\infty)$ and the pull-back
$f\circ\mu$. The function $f$ need not be smooth. However, we do
require $f$ to extend to a neighborhood $U$ of $\bar{\Delta}_+$ such
that the domain \eqref{eq:convex-ext} is convex/concave. (Note that
this extension of $f$ is automatically continuous on perhaps a smaller
neighborhood of $\bar{\Delta}_+$ due to the concavity/convexity
condition; see, e.g., \cite{Ro}.) Then we have the following analogue
of Theorem \ref{thm:toric-Reeb}, proved in Section
\ref{sec:pf-toric-nonsmooth}:

\begin{Theorem}
  \label{thm:toric-nonsmooth}
Assume that $W$ is convex or concave.
Then for every $\eps>0$ there exist constants $C_n(\Omega)$ and
$C_0(\Omega)$ independent of $s$ and $\eps>0$ such that
\begin{equation}
  \label{eq:toric-nonsmooth}
\fb_\eps(s)\leq C_n(\Omega) s^n + C_0(\Omega).
\end{equation}
\end{Theorem}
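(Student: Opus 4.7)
The plan is to reduce the non-smooth statement to the smooth case of Theorem \ref{thm:toric-Reeb} via approximation by smooth convex (resp.\ concave) toric domains, combined with the continuity definition of $\fb_\eps$ from Section \ref{sec:C0}. Since $W=W_f$ is convex (resp.\ concave) in the sense of Section \ref{sec:toric}, the defining function $f$ extends continuously to a neighborhood $U$ of $\bar{\Delta}_+$ in $S^{n-1}$ so that the set \eqref{eq:convex-ext} (resp.\ \eqref{eq:concave-ext}) is convex. The goal is to produce a sequence of smooth convex (resp.\ concave) extensions $f_k\to f$ in $C^0$ on $\bar{\Delta}_+$, deduce the bound for the associated smooth toric domains $W_k:=W_{f_k}$ from Theorem \ref{thm:toric-Reeb}, and then pass to the limit.

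For the approximation itself, I would work in the linear coordinates $x=r\theta\in\R^n$ and view $\Omega_f$ (or its complement) as a convex body. In the convex case, standard mollification of the radial function via convolution in the ambient $\R^n$ followed by convex envelope, combined with a slight outward rescaling $(1+1/k)f$ to keep the extension domain $U$, produces smooth convex toric domains $W_k$ with $f_k\to f$ uniformly on $\bar{\Delta}_+$; in the concave case the analogous construction is done for the complement. Theorem \ref{thm:toric-Reeb} then applies to each $W_k$, yielding
\begin{equation*}
\fb^{W_k}_\eps(s)\leq C_n(\Omega_k)\,s^n+C_0(\Omega_k).
\end{equation*}
The key observation is that, by inspection of the proof in Section \ref{sec:pf-toric-Reeb}, the constants $C_n(\Omega_k)$ and $C_0(\Omega_k)$ depend only on a few robust geometric quantities of $\Omega_k$ (diameter, distance from the origin, and $\sup f_k$), which remain uniformly bounded along the approximating sequence because $f_k\to f$ uniformly.

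The final step is to pass from $W_k$ to $W$. By the continuity definition recalled around \eqref{eq:fb-C0}, $\fb_\eps(s)$ for a non-smooth star-shaped domain is obtained as an appropriate limit of the barcode growth functions of its smooth approximants, with the symplectic Banach--Mazur framework of Section \ref{sec:Inv} ensuring the requisite stability up to an arbitrarily small action shift. Combined with the uniform bound above, this yields $\fb_\eps^W(s)\leq C_n(\Omega)s^n+C_0(\Omega)$ with constants depending only on the limiting $\Omega$.

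The main obstacle I anticipate is twofold. First, showing that the constants $C_n(\Omega_k)$ and $C_0(\Omega_k)$ produced in the smooth proof are genuinely stable under $C^0$-approximation of the defining function; this forces one to re-examine the bound in Section \ref{sec:pf-toric-Reeb} to confirm it avoids any dependence on higher derivatives of $f_k$ (e.g.\ curvature), and to argue by $C^0$-continuity of the underlying moment/action integrals that a uniform constant works. Second, in the concave case one must verify that the approximants $f_k$ can simultaneously be made smooth \emph{and} satisfy the extension-to-$U$ concavity requirement of Section \ref{sec:toric}; a careful choice of mollifier supported well inside $U$, together with a uniform inward rescaling, should handle this but deserves attention since the convex/concave condition in our sense is slightly stronger than convexity/concavity of $\Omega_f$ itself (cf.\ Example \ref{ex:toric-not-smooth} and Remark \ref{rmk:def-convex-concave}).
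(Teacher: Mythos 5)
Your overall architecture is the same as the paper's: approximate the non-smooth convex/concave $W_f$ by smooth convex/concave toric domains, invoke the smooth bound, and pass to the limit through the definition \eqref{eq:fb-C0}--\eqref{eq:fb-C01}. The gap is in the step you flag but then wave away: the claim that the constants $C_n(\Omega_k)$, $C_0(\Omega_k)$ from Theorem \ref{thm:toric-Reeb} depend only on $C^0$-robust data (diameter, distance from the origin, $\sup f_k$) is not correct. In the smooth proof these constants are of the form $C_n(\Omega)\sim C\,m^{-n}$, where $m$ is the infimum over all faces $\Delta$ of the Reeb periods $T(\theta)=\tf(\theta)/\sqrt{1+\|P_\Delta\nabla\tf(\theta)\|^2/\tf(\theta)^2}$ (see \eqref{eq:T}, \eqref{eq:m-f}, \eqref{eq:m} and Remark \ref{rmk:m}); the count of short closed orbits scales like $(s/m)^n$. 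So the constants are controlled by a \emph{first-order} quantity, an upper bound on $\|\nabla f_k\|$, and uniform convergence $f_k\to f$ on $\bar{\Delta}_+$ gives no such bound by itself. Your appeal to ``$C^0$-continuity of the underlying moment/action integrals'' does not close this: the action of the orbits on a toric boundary genuinely involves the gradient of the defining function.

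This is precisely where the paper's technical work lies, and your approximation scheme does not reproduce it. The paper does not mollify the radial function $f$ on the sphere (your ``convolution of the radial function followed by convex envelope'' is both vague and liable to destroy either smoothness or the toric graph structure); it mollifies the $1$-homogeneous convex function $F(x)=r/f(\theta)$ on a truncated cone, where convexity of the extension \eqref{eq:convex-ext} makes $F$ uniformly Lipschitz with constant $L$ determined by $f$ (Rockafellar). The key point is Lemma \ref{lemma:dir}: the radial derivative of $F_\eta=F*\delta_\eta$ along the level set $\Sigma_\eta=\{F_\eta=1\}$ is bounded below by a constant $\xi>0$ depending only on $f$, so $\Sigma_\eta$ is a smooth graph $r=f_\eta(\theta)$ with $\|\nabla f_\eta\|\leq L/\xi$ uniformly in $\eta$, whence $m(f_\eta)$ is bounded away from $0$ independently of $\eta$ and Remark \ref{rmk:m} gives constants depending only on $\Omega_f$. (Strict convexity is then restored by adding a tiny positive definite quadratic form to $F_\eta$ without degrading the bound on $m$, and the concave case is handled by the same scheme applied to $-F$ on a truncated complement.) Without an argument supplying a uniform gradient bound for your approximants $f_k$, the uniform-constant claim, and hence the passage to the limit, is unsubstantiated.
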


As a consequence of Theorem \ref{thm:toric-nonsmooth},
$\hbar^{\scriptstyle{Slow}}(W)\leq n$ and $\hbar(W)=0$ when $W$ is
toric convex or concave.

\subsection{Barcode growth for toric symplectic manifolds}
\label{sec:results-sympl}
Throughout this section, $(W^{2n},\omega)$ is a closed symplectic
manifold which for the sake of simplicity we assume to be weakly
monotone. Then for a Hamiltonian diffeomorphism
$\varphi=\varphi_H\colon W\to W$ generated by a Hamiltonian
$H\colon W\times S^1\to \R$ its filtered Floer homology $\HF^a(H)$ is
defined. As in the previous section the homology is taken over any
ground field $\F$, fixed in advance. If we attempted to drop the
condition that $W$ is weakly monotone, we would set $\F=\Q$. In
general, these homology spaces do not quite form a persistence module,
but the barcode of $\HF^s(H)$ is still defined (see \cite{UZ}), and by
slightly abusing notation we denote by $\fb_\eps(\varphi)$ the
number of bars in this barcode of length greater than $\eps>0$ as in
\cite{CGG:Entropy}. We are interested in the behavior of
$\fb_\eps\big(\varphi^k\big)$ as $k\to\infty$. When $\varphi$ is clear
from the context, we will write $\fb_\eps(k)$ for
$\fb_\eps\big(\varphi^k\big)$.

Next, assume in addition that $W^{2n}$ is toric, i.e., $W$ admits a
faithful Hamiltonian $\T^n$-action; see, e.g., \cite[Chap.\
XI]{CdS}. Let
$$
\mu\colon W\to \R^n
$$
be the moment map and $\Omega:=\mu(W)$ the moment polytope, in the
symplectic context, usually referred to as the Delzant polytope. (This
notation is based on a rather superficial analogy with the setting of
Section \ref{sec:results-Reeb}. In fact, $\bar{\Delta}_+$ is a much
better analogue of the moment polytope as is also clear from the proof
of Theorem \ref{thm:toric-sympl}.) The polytope $\Omega$
completely determines $(W, \omega)$ together with the action (see
\cite{De}), and as in Section \ref{sec:results-Reeb} we will use the
notation $W:=W_\Omega$ when $W$ is a closed toric manifold.

Let now $h\colon \Omega\to \R$ be a smooth function. Setting
$H=h\circ \mu$ we obtain an (autonomous) Hamiltonian on
$W_\Omega$. As before, the Hamiltonian flow generated by $H$ is completely
integrable in a very strong sense with $n$ first integrals
$x_i\circ \mu$ generating a Hamiltonian $\T^n$-action commuting with
the flow. Our next result is an analogue of Theorem
\ref{thm:toric-Reeb} for closed symplectic manifolds.

\begin{Theorem}
  \label{thm:toric-sympl}
Assume that $h$ is convex or concave or real
analytic. Then
\begin{equation}
  \label{eq:toric-sympl}
\fb_\eps\big(\varphi^k_H\big)\leq C_n(h) k^n + C_0(h)
\end{equation}
for all $k\in \N$, where the constants $C_n(h)$ and $C_0(h)$ are independent of
$\eps>0$ and, of course, $k$.
\end{Theorem}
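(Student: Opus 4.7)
\smallskip
\noindent\textbf{Proof proposal.}
The plan is to reduce the barcode count to counting Morse--Bott families of $1$-periodic orbits of $kH$ and then bound this count by $O(k^n)$ using the integrable structure. Since $H=h\circ\mu$ is $\T^n$-invariant, its Hamiltonian flow preserves each fiber of $\mu$, and on the open dense locus where $\mu$ is a submersion (preimage of the interior of $\Omega$) the fibers are tori $\T^n=\mu^{-1}(x)$ on which the flow of $H$ is the translation by $\nabla h(x)\in\R^n\cong\operatorname{Lie}(\T^n)$. Consequently, $1$-periodic orbits of $\varphi^k_H=\varphi_{kH}$ on the regular locus form the Morse--Bott tori $\mu^{-1}(x)$ for exactly those $x$ with $k\nabla h(x)\in\Z^n$ (the weight lattice of the $\T^n$-action). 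The singular strata of $\mu$ (where the isotropy is non-trivial) are compact symplectic submanifolds of dimension at most $2n-2$, so, by the same argument applied to them, they will contribute at most $O(k^{n-1})$ Morse--Bott families.

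Next I would count the $x\in\Omega^\circ$ with $k\nabla h(x)\in\Z^n$. When $h$ is convex or concave, $\nabla h$ is a monotone map, so after an arbitrarily small $C^2$-perturbation of $h$ (which affects none of the estimates) it becomes strictly monotone and hence injective on $\Omega$. Since $\Omega$ is compact and $\nabla h$ is continuous, the image $\nabla h(\Omega)\subset\R^n$ is bounded, and the number of solutions is at most
$$
\#\bigl(k\nabla h(\Omega)\cap\Z^n\bigr)\leq C\,\vol\bigl(\nabla h(\Omega)\bigr)\,k^n+O(k^{n-1}).
$$
When $h$ is real analytic, $\nabla h\colon\Omega\to\R^n$ is a real analytic map of bounded image, and I would invoke subanalytic geometry (Łojasiewicz/Hardt triangulation, or a Khovanskii-type fewnomial bound on the number of components of preimages under a fixed real analytic map) to conclude that the number of connected components of $\{x\in\Omega:k\nabla h(x)\in\Z^n\}$ is bounded by a constant times the number of lattice points in $k\nabla h(\Omega)$, hence also $O(k^n)$.

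Each Morse--Bott family of $1$-periodic orbits is (after possibly stratifying by the torus isotropy) a compact manifold with total Betti number bounded by that of $\T^n$, i.e.\ at most $2^n$; by a standard Morse--Bott-to-Morse perturbation, arbitrarily small in $C^2$ (and hence causing a negligible change in the action filtration), each such family produces at most $2^n$ non-degenerate $1$-periodic orbits. Therefore, after perturbation, the Floer chain complex $\CF(\varphi^k_H)$ has rank at most $2^n\cdot O(k^n)+O(k^{n-1})$. Since the number of bars in the barcode of a filtered complex is at most the rank of that complex, we obtain
$$
\fb_\eps\bigl(\varphi^k_H\bigr)\leq C_n(h)\,k^n+C_0(h),
$$
as claimed, for any $\eps>0$ (the perturbation is chosen smaller than $\eps$ so that the barcode of the perturbed complex $\eps/2$-interleaves the barcode of $\varphi^k_H$).

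The main obstacle is the real analytic case: controlling the number of components of $\{x:k\nabla h(x)\in\Z^n\}$ as $k\to\infty$ requires a uniform geometric-complexity bound for the fibers of a fixed real analytic map, and this is where tame geometry (subanalytic stratifications or bounds in the style of Khovanskii) does the real work. A secondary technical point is handling the orbits on the boundary strata of $\Omega$ and checking that the Morse--Bott perturbation can be done compatibly with the action filtration so that the $O(k^n)$ generator bound is inherited by the length-$\eps$ bar count for all $\eps>0$ simultaneously, with constants independent of $\eps$.
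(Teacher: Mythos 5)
Your convex/concave argument is essentially the paper's proof: stratify by the faces of the moment polytope, strictify $h$ by a small perturbation so that $\nabla(h|_\Delta)$ is injective and submersive on each open face, count lattice values $k\nabla h(x)\in\Z^n$ by a volume bound, and split each Morse--Bott torus into at most $2^n$ non-degenerate orbits, so that the rank of the perturbed Floer complex (and hence the number of bars, uniformly in $\eps$) is $O(k^n)$. The only gloss there is that the strictification and the lattice count must be carried out on every face of $\Omega$, not just the interior, but this is exactly what the paper does and your "same argument on the singular strata" remark covers it.

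The real analytic case, however, has a genuine gap, which you partly flag yourself. Bounding the number of connected components of $\{x\in\Omega : k\nabla h(x)\in\Z^n\}$ (which one can indeed do uniformly in $k$ by tame geometry, e.g.\ Hardt/Gabrielov-type fiber bounds for the fixed map $\nabla h$) is not enough: when a lattice value $v$ is a \emph{critical} value of $\nabla(h|_\Delta)$, the corresponding component need not be a torus, need not be a manifold, and the family of periodic orbits over it is not Morse--Bott non-degenerate. Your step ``each family is a compact manifold with total Betti number at most $2^n$, hence yields at most $2^n$ non-degenerate orbits after a small perturbation'' therefore fails precisely in the case that makes the analytic hypothesis necessary; without normal non-degeneracy a small perturbation gives no a priori bound on the number of $1$-periodic orbits it creates. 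The paper's remedy is different and is the actual content of the analytic case: replace $h$ by $h_\lambda(x)=h(x)+\left<\lambda,x\right>$, so that $\nabla(h_\lambda|_\Delta)=\nabla(h|_\Delta)+\lambda_\Delta$, and show that for a full measure, second Baire category set of small $\lambda$ no rational vector is a critical value of any $\nabla(h_\lambda|_\Delta)$; then every rational torus is Morse--Bott non-degenerate. The count is completed not by a component bound but by the fact that for a fixed real analytic map between manifolds of equal dimension (applied to $(\lambda,w)\mapsto(\lambda,\nabla(h_\lambda|_\Delta)(w))$ over a compact set) the number of preimages of a regular value is bounded by a constant $N$ determined by $h$ (local degree of the complexification), so each of the $O(k^n)$ lattice values contributes at most $N$ tori, i.e.\ at most $N2^n$ generators. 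If you want to keep your route, you would have to supply an argument bounding the contribution of degenerate components to the filtered Floer complex, which is substantially harder than the regular-value perturbation the paper uses.
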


Note that here $h$ is said to be real analytic whenever it extends to
a real analytic function on a neighborhood of $\Omega\subset\R^n$.
Theorem \ref{thm:toric-sympl} is proved in Section
\ref{sec:pf-toric-sympl}.  It is again illuminating to contrast the
upper bound \eqref{eq:toric-sympl} with the behavior of
$\fb_\eps\big(\varphi^k\big)$ for Hamiltonian diffeomorphisms with
``interesting'' dynamics.

By \cite[Thm.\ B]{CGG:Entropy}, $\fb_\eps\big(\varphi^k\big)$ grows
exponentially whenever $\varphi$ has a closed hyperbolic invariant set
$K$ with positive topological entropy $\htop(K)$. More precisely,

$$
\hbar_\eps(\varphi):=\limsup_{k\to\infty}\frac{\log^+\fb_\eps\big(\varphi^k\big)}{k}\geq \htop(K)
$$
for all sufficiently small $\eps>0$. Furthermore, when $\dim W=2$, we have
$$
\hbar(\varphi):=\lim_{\eps\to
  0+}\hbar_\eps(\varphi)
=\htop(\varphi) 
$$
by \cite[Thm.\ C]{CGGM:Reeb}. Ultimately, this is as a consequence of
\cite[Thm.\ B]{CGG:Entropy} and the results from \cite{Ka} mentioned
above.  Moreover, $C^\infty$-generically $\htop(\varphi)>0$ in
dimension 2 (see \cite{KLCN,LCS}), and hence
$\fb_\eps\big(\varphi^k\big)$ grows exponentially
$C^\infty$-generically for small $\eps>0$ when $\dim W=2$.

The \emph{slow barcode entropy} of
$\varphi$ is by definition the polynomial growth rate
$$
\hbar^{\scriptstyle{Slow}}(\varphi):=\lim_{\eps\to
  0+}\frac{\log^+\fb_\eps\big(\varphi^k\big)} {\log k}.
$$
Hence, in the setting of Theorem \ref{eq:toric-sympl},
$\hbar^{\scriptstyle{Slow}}(\varphi)\leq n$ while
$\hbar^{\scriptstyle{Slow}}(\varphi)=\infty$ when \cite[Thm.'s B and
C]{CGG:Entropy} apply, e.g., $C^\infty$-generically in dimension two.
We will continue this discussion in Section
\ref{sec:results-discuss}.

\begin{Remark}
  In the spirit of Section \ref{sec:results-nonsmooth}, it should be
  possible to extend the definition of the barcode or at least the
  barcode growth function to a suitable closure of the space of
  Hamiltonian diffeomorphisms and then prove an analogue of Theorem
  \ref{sec:results-sympl}. For aspherical manifolds and surfaces this
  has been done in \cite{BHS,LRSV,KS} with the $C^0$-closure.
  However, toric manifolds are never aspherical and a stronger notion
  of convergence would likely be needed; cf.\ \cite{OM} for instance.
  Moreover, it is not clear to us what the geometrical or dynamics
  significance of the upper bound, \eqref{eq:toric-sympl}, would be in
  that case. 
\end{Remark}  

\subsection{Invariance of the barcode growth}
\label{sec:results-inv}
It is a standard fact that the filtered symplectic homology of a
Liouville domain is an invariant of exact symplectomorphisms of the
interior; see Section \ref{sec:Inv} for a discussion and, in
particular, Proposition \ref{prop:Inv1} and, e.g., \cite{CFHW,Gu,Hu}
for relevant results. In other words, $\SH^s(W)=\SH^s(W')$ for any two
Liouville domains, e.g., smooth star-shaped domains in $\R^{2n}$, such
that the interiors $\mathring{W}$ and $\mathring{W}'$ are (exact)
symplectomorphic. As a consequence, $\fb_\eps(W,s)=\fb_\eps(W',s)$.

We do not know if the growth function is also an invariant of an
interior for non-smooth star-shaped domains in $\R^{2n}$. However, we
show that
$\hbar^{\scriptstyle{Slow}}(W)=\hbar^{\scriptstyle{Slow}}(W')$ and
$\hbar(W)=\hbar(W')$ for any two not necessarily smooth star-shaped
domains $W$ and $W'$ in $\R^{2n}$ such that their interiors
$\mathring{W}$ and $\mathring{W}'$ are symplectomorphic; see Corollary
\ref{cor:inv-nonsmooth}. (Since the domains are contractible all
symplectomorphisms are automatically exact.) Moreover, if one of the
domains is smooth, the domains have the same barcode growth function
by Theorem \ref{thm:inv-nonsmooth}: $\fb_\eps(W,s)=\fb_\eps(W',s)$.

Combining these results with the discussion in Section
\ref{sec:results-Reeb}, we arrive at the following corollary refining
and generalizing a result from \cite{Hu} obtained by a different
method.

\begin{Corollary}
  \label{cor:non-toric}
  Assume that $W^{2n}$ is a star-shaped domain with smooth boundary
  such that the Reeb flow $\varphi^t$ on $\p W$ has a hyperbolic
  invariant set with positive topological entropy, e.g., $\dim \p W=3$
  and $\htop(\varphi)>0$. Then the interior $\mathring{W}$ is not
  symplectomorphic to $\mathring{W}_\Omega$ when $W_\Omega$ is a
  convex or concave toric domain (not necessarily smooth) or
  $\p\Omega$ is real analytic. In particular, the interior of a
  $C^\infty$-generic star-shaped domain in $\R^4$ is not
  symplectomorphic to $\mathring{W}_\Omega$.
\end{Corollary}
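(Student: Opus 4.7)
The plan is to argue by contradiction, combining the invariance of the barcode entropy as a symplectic invariant of the interior with the sharp dichotomy between subexponential growth on the toric side and exponential growth forced by hyperbolic dynamics. Suppose for contradiction that $\mathring{W}$ is symplectomorphic to $\mathring{W}_\Omega$. Since both domains are contractible, every symplectomorphism between their interiors is automatically exact, and Corollary \ref{cor:inv-nonsmooth} yields $\hbar(W)=\hbar(W_\Omega)$. The goal is to derive a contradiction by showing that the left-hand side is positive while the right-hand side vanishes.

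First I would bound $\hbar(W_\Omega)=0$. If $W_\Omega$ is convex or concave (possibly non-smooth), Theorem \ref{thm:toric-nonsmooth} gives, for each fixed $\eps>0$, a bound $\fb_\eps(s)\leq C_n(\Omega) s^n+C_0(\Omega)$ with constants independent of $s$; if $\p \Omega$ is real analytic, Theorem \ref{thm:toric-Reeb} gives the same polynomial bound. In either case $\log^+\fb_\eps(s)/s\to 0$ as $s\to\infty$, so $\hbar_\eps(W_\Omega)=0$ for every $\eps>0$, and hence $\hbar(W_\Omega)=0$.

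Next I would establish $\hbar(W)>0$. By hypothesis the Reeb flow $\varphi^t$ on $\p W$ admits a closed hyperbolic invariant set $K$ with $\htop(\varphi|_K)>0$. The lower bound \eqref{eq:hbar>htop} from \cite[Thm.\ B]{CGGM:Reeb} then gives $\hbar_\eps(W)\geq \htop(\varphi|_K)>0$ for all sufficiently small $\eps>0$, and passing to the limit as $\eps\to 0+$ yields $\hbar(W)>0$. This contradicts $\hbar(W)=\hbar(W_\Omega)=0$ and proves the first assertion.

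For the final assertion, a $C^\infty$-generic star-shaped domain in $\R^4$ has Reeb flow with $\htop(\varphi)>0$ by \cite{CDHR}. In dimension three, positive topological entropy forces the existence of a hyperbolic invariant set of positive topological entropy by the results of \cite{LY,LS} (the flow analogue of Katok's theorem \cite{Ka}), so the general hypothesis is satisfied and the previous argument applies. The main subtlety, rather than a genuine obstacle, is that Corollary \ref{cor:inv-nonsmooth} must apply in the regime where $W_\Omega$ is only continuous on the boundary; this is precisely why $\fb_\eps(s)$ was extended by continuity in Section \ref{sec:C0} and why invariance was formulated in terms of symplectic Banach--Mazur distance in Section \ref{sec:Inv}. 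With those tools in place the proof reduces to a purely quantitative comparison of growth rates.
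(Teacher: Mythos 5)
Your argument is correct and follows essentially the same route the paper intends: a symplectomorphism of the interiors would force $\hbar(W)=\hbar(W_\Omega)$ by Corollary \ref{cor:inv-nonsmooth} (or Theorem \ref{thm:inv-nonsmooth}, since $W$ is smooth), which is incompatible with $\hbar(W_\Omega)=0$ from Theorems \ref{thm:toric-Reeb}/\ref{thm:toric-nonsmooth} and $\hbar_\eps(W)\geq\htop(\varphi|_K)>0$ from \eqref{eq:hbar>htop}, with the generic $\R^4$ case handled via \cite{CDHR} and \cite{LY,LS} exactly as in the paper's discussion. No gaps.
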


\begin{Remark}
  Note that a similar statement for closed star-shaped domains with
  smooth boundary (in a more general and stronger form) readily
  follows from the non-trivial but standard fact that generic
  autonomous Hamiltonian systems are not completely integrable; see
  \cite{MM:book,MM:paper} and also \cite{BFRV} and references
  therein. Furthermore, we expect the corollary to hold, as stated,
  for all smooth Liouville domains $W$, not necessarily star-shaped in
  $\R^{2n}$. The proof, however, would require some (possibly, non-trivial)
  extension of the notion of symplectic Banach--Mazur distance.
\end{Remark}

When distinguishing Liouville domains with smooth boundary, invariance
of symplectic homology (Proposition \ref{prop:Inv1}) provides a host
of numerical invariants such as barcode entropy or slow barcode
entropy. By \cite[Cor.\ C]{CGGM:Reeb}, in dimension four this
translates into invariance of topological entropy:

\begin{Corollary}
  \label{cor:htop}
Let $W$ and $W'$ be 4-dimensional Liouville domains with smooth
boundary such that the interiors $\mathring{W}$ and $\mathring{W}'$
are exact symplectomorphic. The Reeb flows on $\p W$ and $\p W'$
have the same topological entropy.
\end{Corollary}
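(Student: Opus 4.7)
The plan is to combine two ingredients that have already been recorded in the paper: the invariance of filtered symplectic homology under exact symplectomorphisms of the interior, which is Proposition \ref{prop:Inv1}, and the identification of barcode entropy with topological entropy in dimension three, which is identity \eqref{eq:hbar=htop} taken from \cite[Cor.\ C]{CGGM:Reeb}. No additional dynamical or Floer-theoretic input is needed.

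First, I would apply Proposition \ref{prop:Inv1} to the pair $(W,W')$. Since the interiors $\mathring{W}$ and $\mathring{W}'$ are exact symplectomorphic, this proposition yields an isomorphism of the filtered symplectic homologies $\SH(W)$ and $\SH(W')$ \emph{as persistence modules} over the fixed ground field $\F$. Isomorphic persistence modules have identical barcodes, so in particular
$$
\fb_\eps(W,s) \;=\; \fb_\eps(W',s) \quad \text{for every } \eps>0 \text{ and every } s\in\R.
$$
Taking first $\limsup_{s\to\infty}$ and then the limit as $\eps\to 0^+$ in the definition \eqref{eq:hbar}, I would conclude $\hbar(W)=\hbar(W')$.

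Next, since $\dim W=\dim W'=4$, both boundaries $\p W$ and $\p W'$ are three-dimensional contact manifolds. Applying identity \eqref{eq:hbar=htop} to each domain separately gives
$$
\hbar(W)=\htop(\varphi_W) \quad \text{and} \quad \hbar(W')=\htop(\varphi_{W'}),
$$
where $\varphi_W$ and $\varphi_{W'}$ denote the Reeb flows on the respective boundaries. Combining these two equalities with $\hbar(W)=\hbar(W')$ yields $\htop(\varphi_W)=\htop(\varphi_{W'})$, which is the desired conclusion.

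The only step that requires real attention is the very first one: Proposition \ref{prop:Inv1} must be invoked at the level of \emph{persistence modules}, i.e., as a statement that is compatible with the continuation maps $\SH^{s}(W)\to\SH^{s'}(W)$ for $s\le s'$, so that barcodes (and hence the counting functions $\fb_\eps(\,\cdot\,,s)$) are preserved, rather than as a pointwise isomorphism of the individual vector spaces $\SH^s$ at each action level. This persistence-module formulation is precisely what Section \ref{sec:Inv} is designed to establish; once it is in hand, the corollary is a one-line consequence of \eqref{eq:hbar=htop}.
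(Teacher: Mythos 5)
Your proposal is correct and follows exactly the route the paper intends: Proposition \ref{prop:Inv1} gives $\SH(W)\cong\SH(W')$ as persistence modules (hence equal barcodes and equal $\fb_\eps$, hence $\hbar(W)=\hbar(W')$), and then \eqref{eq:hbar=htop} from \cite[Cor.\ C]{CGGM:Reeb}, applicable since the boundaries are three-dimensional and smooth, converts this into equality of topological entropies. Your emphasis that the invariance must hold at the persistence-module level, not merely spaces-by-spaces, is precisely the point Section \ref{sec:Inv} is set up to address.
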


\subsection{Discussion:  conjectures and open questions}
\label{sec:results-discuss}
There are several possible overlapping directions of generalizing
Theorems \ref{thm:toric-Reeb} and \ref{thm:toric-sympl}. The first one
concerns relaxing the assumptions on the function $h$ or the boundary
$\p \Omega$. We see no reason why the upper bounds
\eqref{eq:toric-Reeb} and \eqref{eq:toric-sympl} would hold literally
as stated with $C_n$ and $C_0$ independent of $\eps$ when $h$ is just
a smooth function or $\Omega$ is just a toric star-shaped domain with
smooth boundary. However, we do expect $\fb_\eps$ to grow at most
subexponentially in this case or perhaps even slower.

In essence, the main reason for a slow growth of $\fb_\eps$ in Theorems
\ref{thm:toric-Reeb} and \ref{thm:toric-sympl} is that the underlying
Hamiltonian flow is completely integrable, i.e., there are $n$
functions in involution (first integrals) preserved by the flow and
independent on a sufficiently large, at least open and dense, set; see
\cite[Sec.\ 49]{Ar}. Hence, another direction is relaxing the
condition that the first integrals generate a $\T^n$-action, i.e.,
that $W$ is toric, and turning to more general completely integrable
systems. (Note also that once this condition is dropped the
convexity/concavity requirements do not appear to be any longer
meaningful.) We conjecture that $\fb_\eps$ grows at most
subexponentially or perhaps even slower than that when the flow and
the first integrals are real analytic or the integrals meet suitable
non-degeneracy conditions; see Remark \ref{rmk:vol} and, e.g.,
\cite{BO,El,MZ,Zu} and references therein.

Some restrictions on the first integrals are necessary. Indeed, an
example of a closed Riemannian 3-manifold $Q$ with a completely
$C^\infty$-integrable geodesic flow and an exponentially growing
$\pi_1(Q)$ was constructed in \cite{BT}. Then the geodesic flow
necessarily has positive topological entropy; \cite{Di}. Moreover,
then the set of conjugacy classes in $\pi_1(Q)$ also grows
exponentially, and hence so does the number of infinite bars
$\fb_\infty(s)$. Hence, $\fb_\eps(s)\geq \fb_\infty(s)$ grows
exponentially for every $\eps>0$.  In other words,
$\hbar_\eps\geq \hbar_\infty>0$. While at the moment we do not have a
reference or a construction, we believe that in a similar vein a
completely integrable Hamiltonian flow or diffeomorphism on a closed
symplectic manifold can have positive barcode entropy and therefore,
by \cite[Thm.\ A]{CGG:Entropy}, positive topological entropy. It would
also be interesting to have an example of a completely integrable Reeb
flow in dimension three with positive topological entropy. The same
question stands for Reeb flows on $S^{2n-1\geq 3}$ for barcode entropy
or at least topological entropy.

The results relating barcode growth and topological entropy have been
actually established in a more general setting than discussed in
Sections \ref{sec:results-Reeb} and \ref{sec:results-sympl}. Namely,
for a Hamiltonian diffeomorphism $\varphi$ one can relate the barcode
growth of the filtered Lagrangian Floer homology
$\HF\big(\varphi^k(L), L'\big)$ for a pair of Lagrangian submanifolds
$L$ and $L'$ (aka the relative barcode entropy) to the topological
entropy of $\varphi$; see \cite{CGG:Entropy,Me}. An analogue of this
relation between relative barcode entropy and topological entropy for
geodesic flows is investigated in \cite{GGM} and in \cite{Fe1,Fe2} for
Liouville domains by using wrapped Floer homology. In particular, in
all these cases the relative barcode entropy is bounded from above by
the topological entropy. Thus in the settings of Theorems
\ref{thm:toric-Reeb} and \ref{thm:toric-sympl} one can expect
subexponential growth of the barcode function for any pair of
Lagrangian or Legendrian submanifolds. (While we are not aware of any
published results to this account, we firmly believe that the
topological entropy is zero in the setting of these theorems.)
However, are there robust polynomial growth upper bounds as in these
theorems? For strategically chosen submanifolds the question should be
accessible by reasoning similar to the proofs of those
theorems. However, for a general pair of Lagrangian or Legendrian
submanifolds the situation could be much more involved.

Finally, note that while Theorems \ref{thm:toric-Reeb} and
\ref{thm:toric-sympl} provide an upper bound on the growth rate of
$\fb_\eps$, it is not clear what the actual behavior of $\fb_\eps$ is
in, say, the convex/concave case. Depending on $W$, the barcode
function $\fb_\eps$ can grow slower than these upper bounds. For
instance, it is not hard to see that when $\p W$ is an ellipsoid in
the setting of Section \ref{sec:results-Reeb}, i.e., $\Omega$ is cut
out from $\R^{n}_{\geq 0}$ by a hyperplane, $\fb_\eps(s)$ grows
linearly regardless of the dimension. In particular,
$\hbar^{\scriptstyle{Slow}}(W)=1$. When $W$ is a closed toric manifold
and $h$ is linear, $\fb_\eps(\varphi^k)$ is bounded and thus
$\hbar^{\scriptstyle{Slow}}(\varphi)=0$. (However, in both of these
examples the system is not strictly convex.)  For convex/concave toric
domains, the situation might be simpler than for symplectic toric
manifolds, but the answer is still unknown. For strictly
convex/concave domains we expect the upper bound from Theorem
\ref{thm:toric-sympl} to be sharp, i.e., $\fb_\eps(s)$ to grow as a
polynomial of degree $n$. A related question is that of the behavior
of the shortest bar $\beta_{\min}$: For instance, does $\beta_{\min}$
remain bounded away from 0 as $s\to\infty$ or $k\to\infty$ when
$\p\Omega$ or $h$ is strictly convex? Some numerical evidence, albeit
indirect, supporting the conjecture that this may indeed be the case
has been recently obtained by Pazit Haim-Kislev; \cite{HK}. However,
even in the setting of Theorem \ref{thm:toric-sympl} for $n=1$, i.e.,
when $h$ is a convex/concave function of the height on
$\CP^1=S^2\subset\R^3$, both questions are quite non-trivial because
of the affect of recapping.

\begin{Remark}[Volume growth]
  \label{rmk:vol}
  There is a different and slightly more general approach to the
  proofs of Theorems \ref{thm:toric-Reeb} and
  \ref{thm:toric-sympl}. Namely, the growth of $\fb_\eps(s)$ or
  $\fb_\eps(\varphi^k)$ is bounded from above by the growth of the
  volume of the graph of $\varphi^s_\alpha$ or $\varphi^k_H$; see
  \cite{CGG:Entropy, CGG:Growth, FeLS}. For completely integrable
  systems with toric singularities, it should not be hard to show that
  the volume grows polynomially with an upper bound as in the right
  hand sides of \eqref{eq:toric-Reeb} and \eqref{eq:toric-sympl}. The
  advantage of the approach we have chosen here is that it gives a
  ``hands-on'' upper bound on the number of generators and also more
  readily extends to the non-smooth setting of Section
  \ref{sec:results-nonsmooth}.
\end{Remark}  

\begin{Remark}[Equivariant barcode growth]
  \label{rmk:equiv}
  The upper bounds from Theorems \ref{thm:toric-Reeb},
  \ref{thm:toric-nonsmooth} and \ref{thm:toric-sympl} have analogues
  for the barcode growth of the $S^1$-equivariant symplectic or Floer
  homology over $\Q$. The proofs of such upper bounds are slightly
  more direct than and essentially contained in the proofs of these
  theorems due to the description of the underlying chain complexes
  in, say, \cite[Sec.\ 2.5]{GG:LS}; see also \cite{GH} for relevant
  calculations and constructions. Our choice of non-equivariant Floer
  and symplectic homology is dictated by the fact that the machinery
  of barcode entropy, used here for comparison purposes, is up to date
  systematically developed only in the non-equivariant setting. For
  instance, while \eqref{eq:hbar>htop} continues to hold for
  equivariant homology due to \cite[Thm.\ 3.1]{BP3S2} as is pointed
  out in \cite[Rmk.\ 2.4]{GGM}, it is not known if the opposite
  inequality, \eqref{eq:hbar<htop}, holds.
\end{Remark}

\section{Proof of Theorem \ref{thm:toric-sympl}}
\label{sec:pf-toric-sympl}
It is illuminating to start with the proof of Theorem
\ref{thm:toric-sympl}, for the argument requires little background and
sets the stage for the proofs of its contact analogues which are more
technical. We refer the reader to, e.g., \cite{CdS} for a concise and
self-contained introduction to symplectic toric manifolds.  We will
prove a result stronger then Theorem \ref{thm:toric-sympl}, giving an
upper bound on the number of generators of the Floer complex (after a
small perturbation) over the Novikov field, and hence on
$\fb_\eps\big(\varphi^k_H\big)$.

\begin{Theorem}
  \label{thm:toric-sympl2}
  Assume that $h$ is convex or concave or real analytic. Then, for a
  suitable arbitrarily $C^\infty$-small non-degenerate perturbation
  $\psi$ of $\varphi_H^k$, the number of fixed points of $\psi$ is
  bounded from above by $ C_n(h) k^n + C_0(h)$, where the constants
  $C_n(h)$ and $C_0(h)$ are independent of the perturbation and $k$.
\end{Theorem}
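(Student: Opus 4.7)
The plan is to exploit the toric symmetry of $H=h\circ\mu$ to recast the count of fixed points of $\varphi_H^k$ as a lattice-point problem in $\R^n$, and then to construct an explicit $C^\infty$-small perturbation which resolves the Morse--Bott degeneracy along each orbit by a uniformly bounded factor. Concretely, since $H$ is $\T^n$-invariant the flow preserves the moment fibers $\mu^{-1}(x)$; on such a fiber it acts by the one-parameter subgroup $t\mapsto\exp(t\nabla h(x))$ through the stabilizer quotient. Hence the fixed-point set of $\varphi_H^k$ is a disjoint union of $\T^n$-orbits, one over each $x\in\Omega$ satisfying a resonance condition: for $x$ in the interior of $\Omega$ the condition is $\nabla h(x)\in\tfrac{1}{k}\Z^n$, and on a codimension-$r$ face it is $k\nabla h(x)\in\Z^n+\mathfrak{t}_r$ modulo the stabilizer Lie algebra $\mathfrak{t}_r$.

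First I would replace $h$ by a nearby $\tilde h$, in the same class, so that $\nabla\tilde h$ is well controlled. In the convex (resp.\ concave) case, adding a tiny strictly convex (resp.\ concave) quadratic makes $\nabla\tilde h$ a diffeomorphism from $\Omega$ onto a bounded convex set $B\subset\R^n$; the interior resonance condition then has at most $|B\cap\tfrac{1}{k}\Z^n|\le C_n(h)\,k^n+O(k^{n-1})$ solutions by the standard lattice-point estimate, with leading constant $\mathrm{vol}(B)$. In the real-analytic case a generic small analytic perturbation makes $\nabla\tilde h$ proper and finite-to-one on a neighborhood of $\Omega$ with a uniform multiplicity bound $M$ on $|\nabla\tilde h^{-1}(v)|$, obtained from standard finiteness theorems for real-analytic maps (\L{}ojasiewicz stratification of the critical locus, or a Morse-type generic deformation within the analytic category); the $O(k^n)$ bound then follows with $M$ absorbed into the constant.

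Next I would break the $\T^n$-Morse--Bott degeneracy by a second small perturbation $\eta f$, where $f\colon W\to\R$ restricts to a fixed Morse function with the minimum $2^{n-r}$ critical points on each $\T^{n-r}$-orbit (a sum of cosines in the angular coordinates, smoothly extended across the boundary strata). For $\eta>0$ sufficiently small the map $\psi=\varphi_{\tilde H+\eta f}^k$ is non-degenerate, and near each Morse--Bott orbit it has at most $2^{n-r}$ fixed points. Summing the interior contribution $2^n\cdot(C_n(h)k^n+O(k^{n-1}))$ with the $O(k^{n-r})$ contributions from the finitely many codimension-$r$ faces of $\Omega$ yields the advertised bound $C_n(h)k^n+C_0(h)$, into which all sub-leading powers of $k$ and all boundary contributions are absorbed.

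The main obstacle I expect is the real-analytic case without convexity, where $\nabla h$ may have complicated ramification (for example positive-dimensional critical fibers along analytic hypersurfaces where the Hessian degenerates). Bounding $|\nabla\tilde h^{-1}(v)|$ uniformly in $v$ is therefore not automatic, and requires either a careful Morse-type deformation within the real-analytic class or a Yomdin-style semianalytic volume bound. A secondary technical point is the global construction of the Morse perturbation $f$ across the boundary strata of $W$: this is a combinatorial issue controlled by the face structure of $\Omega$ and must be done so that a single small perturbation works simultaneously for all $O(k^n)$ Morse--Bott orbits, with $\eta$ small enough that no spurious fixed points are created between distinct orbits.
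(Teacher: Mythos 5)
Your convex/concave argument is essentially the paper's: after a small strictly convex (resp.\ strictly concave) perturbation $\tilde h$, the fixed tori of $\varphi_{\tilde H}^k$ over each open face $\Delta$ of the Delzant polytope correspond to the points of the image of $\nabla(\tilde h|_\Delta)$ lying in $\frac{1}{k}\Z^{d}$, $d=\dim\Delta$, for the canonical lattice of the face; the lattice count gives $O(k^{d})$ such tori with leading constant controlled by a fixed neighborhood of the closure of $\nabla(h|_\Delta)(\Delta)$, hence independent of the perturbation; strict convexity makes every resonant torus Morse--Bott nondegenerate; and a final $C^\infty$-small perturbation splits each torus into $2^{d}$ nondegenerate fixed points. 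This is exactly the paper's face-by-face argument, and your two ``secondary'' worries are harmless: for fixed $k$ there are only finitely many resonant tori, so one small perturbation handles them all simultaneously.

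The genuine gap is the one you flag yourself: the real analytic case. There you need two things of $\tilde h$: (i) the relevant rational vectors are regular values of each face gradient map, so that the resonant fibers are isolated Morse--Bott tori, and (ii) a bound $N$ on the number of preimages of a regular value that is independent of the perturbation, and hence of $k$ -- the theorem demands $C_n(h),C_0(h)$ depending on $h$ alone, and since you may have to re-choose $\tilde h_k$ for each $k$, a multiplicity bound $M=M(\tilde h_k)$ could blow up with $k$. Your appeal to ``generic small analytic perturbation'' plus ``standard finiteness theorems'' gives such a bound only for each fixed perturbation, and generic Morse-type deformation inside the real analytic category is not an off-the-shelf tool. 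The paper closes this with a concrete device: perturb only by linear forms, $h_\lambda=h+\langle\lambda,\cdot\rangle$ with $\lambda$ in a fixed small ball $B$, so that $G_{\Delta,\lambda}=G_\Delta+\lambda_\Delta$. Then (i) holds for a full measure, second Baire category set of $\lambda$ because critical values merely translate (for each fixed $k$ one even gets an open dense set, which is all you need), while (ii) follows by applying the finiteness of the number of preimages of a regular value to a \emph{single} real analytic map, $(\lambda,w)\mapsto\big(\lambda,G_{\Delta,\lambda}(w)\big)$, on the compact set $B\times\bar\Delta$, so the resulting bound $N$ is determined by $h$ alone. If you replace your unspecified analytic perturbation by this linear family (or otherwise prove a perturbation-independent multiplicity bound), your argument goes through as written.
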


We have included the constant $C_0(h)$ in the upper bound only to
emphasize the similarity with the Reeb case. Since $k\geq 1$, this
constant can be absorbed into $C_n(h)$.

\begin{Remark}
  It is not hard to see from the proof of Theorem
  \ref{thm:toric-sympl2} that when $h$ is strictly convex/concave the
  upper bound provided by the theorem is sharp in the following
  sense. Namely, the Hamiltonian $kH$ is Morse--Bott non-degenerate
  and the components of its fixed-point set are tori of dimensions
  between $0$ and $n$, and the number of such tori grows as a
  polynomial of degree $n$. (In fact, this is true already for
  $n$-dimensional tori.) Moreover, for this type of a lower bound it
  is enough to just have a region when the Hessian of $h$ is
  non-degenerate.
\end{Remark}

\begin{proof}
  We will focus first on the convex case. The proof for a concave
  function $h$ is identical.  Let $\tth$ be a strictly convex
  $C^\infty$-small perturbation of $h$. To be more precise, we require
  the restriction of $\tth$ to any open face $\Delta$, of any
  dimension $d$, of the moment polytope $\Omega$ to be strictly
  convex. Set $G_\Delta=\nabla (h|_\Delta)$ and
  $\tG_\Delta=\nabla (\tth|_\Delta)$. We can treat $G_\Delta$ and
  $\tG_\Delta$ as maps $\Delta\to V_\Delta$, where $V_\Delta$ is the
  $d$-dimensional linear space parallel to the affine space containing
  $\Delta$. This is (the dual of) the Lie algebra of a torus, a
  certain quotient of $\T^n$, and hence $V_\Delta$ carries a canonical
  lattice $\Z^d$ and a canonical measure. We will fix a basis in
  $\Z^d$ and thus, in particular, identify $V_\Delta$ with $\R^d$.

  Then $\tG_\Delta(\Delta)\subset V_\Delta$ is an open subset of $V_\Delta$
  and the map $\tG_\Delta$ is one-to-one due to strict
  convexity. Moreover, $\tG_\Delta(\Delta)$ is contained in a fixed in
  advance open neighborhood $U_\Delta$ of the closure of
  $G_\Delta(\Delta)$. Note that we can make $\vol_d(U_\Delta)$ 
  arbitrarily close to $\vol_d\big(G_\Delta(\Delta)\big)$, where $\vol_d$ stands for the
$d$-dimensional volume.

The inverse image $L_w:=\mu^{-1}(w)$, where $w\in\Delta$, is a torus
of dimension $d$ invariant under the flow of $\tH=\tth\circ \mu$ and
an orbit of the $\T^n$-action. This torus is comprised entirely of
periodic orbits $\varphi_{\tH}$ if and only if $\tG_\Delta(w)$ is a
rational vector in $V_\Delta$. We will call such tori
rational. (Otherwise, no point in $L_w$ is periodic.) Denote the
components of $\tG_\Delta(w)$ by $p_i/q_i$, where, of course, $p_i$
and $q_i$ are relatively prime.  The minimal period $q$ of any
$x\in L_w$ is the least common multiple of the denominators
$q_1,\ldots, q_d$. (Note that $q$ is independent of the choice of the
basis in $\Z^d$.)

For $q\leq k$ the number of rational points in
$\tG_\Delta(\Delta)\subset U_\Delta$ is bounded from above by the
number of such points in $U_\Delta$. The later number grows as
$$
\vol_d(U_\Delta) k^d+ O(k^{d-1}),
$$
and this function is completely determined by $U_\Delta$ and thus is
independent of $\tth$ as long as $\tth$ is close to $h$.  Combining
these upper bounds for all $\Delta$ we conclude that the total number
of rational tori $L_w$ with $q\leq k$ is bounded from above by
$ C'k^n + C''$, where the constants $C'$ and $C''$ can be taken
independent of $\tth$.

Furthermore, it is easy to see that all such tori $L_w$ are
Morse--Bott non-degenerate. This is a consequence of the fact that $w$
is a regular point of $\tG_\Delta$ due to again strict
convexity. Applying a $C^\infty$-small perturbation to
$\varphi_{\tH}^k$, we can split each $L_w$ into $2^d$
non-degenerate fixed points. Setting $C_n(h)=2^nC'$ and
$C_0(h)=2^nC''$ we obtain a perturbation $\psi$ of $\varphi_{\tH}^k$,
and hence of $\varphi_{H}^k$, with the number of fixed points bounded
from above by $ C_n(h) k^n + C_0(h)$ as required. In the concave case
the argument is similar.

Next, assume that $h$ is real analytic, i.e., by definition, $h$ extends
to a real analytic function on a neighborhood of $\Omega$. Note that
in the above argument the convexity of $\tth$ was used only to ensure
that $\tG|_\Delta$ is one-to-one, and hence $L_w$ is a single torus,
and that this map does not have rational critical values, and hence $L_w$ is
Morse--Bott non-degenerate. Keeping the notation from the convex case,
we see that it is enough to find an arbitrarily small perturbation
$\tth$ of $h$ such that
\begin{itemize}

\item no rational vector $v\in V_\Delta$ is a critical value of $\tG_\Delta$;  
  
\item for any $\Delta$ and any regular value $v\in V_\Delta$ the
  number of inverse images $\tG^{-1}_\Delta(v)$ is bounded from
  above by some constant $N$ completely determined by $h$.

  \end{itemize}
Once these requirements are met we can set $C_n(h)=N2^nC'$ and
$C_0(h)=N2^nC''$ where $C'$ and $C''$ are as above.

Let $B$ be a sufficiently small closed ball in $\R^n$ centered at the
origin. Set $h_\lambda(x):=h(x)+\left<\lambda,x\right>$ and
$G_{\Delta,\lambda}:=\nabla (h_\lambda|_\Delta)$, where
$\lambda\in B$. In particular, $h_0=h$, and $h_\lambda$ and
$G_{\Delta,\lambda}$ are small perturbations of $h$ and,
respectively, $G_\Delta$. Denote by $\lambda_\Delta$ the orthogonal
projection of $\lambda$ to $V_\Delta$. Then, clearly,
$$
G_{\Delta,\lambda}=G_\Delta+\lambda_\Delta.
$$
For a fixed $k$, for an open and dense and full measure set of
$\lambda$'s the set of critical points of $G_{\Delta,\lambda}$
contains no rational points with $q\leq k$ for all $\Delta$; for the
sets of all faces $\Delta$ and such rational points are finite.  As a
consequence, for a full measure second Baire category set of
$\lambda\in B$, the set of critical values of all $G_{\Delta,\lambda}$
contains no rational points. Therefore, for such parameters $\lambda$,
the rational tori $L_w:=\mu^{-1}(w)$ are Morse--Bott non-degenerate
for the flow of $\tH=\tth\circ \mu$ with $\tth=h_\lambda$. Hence, the
first requirement is satisfied for all such $\tth$.

To finish the proof it remains to establish the second
requirement. Recall that for a real analytic map between manifolds of
the same dimension and a fixed compact set, the number of the inverse
images in that set of a regular value is bounded from above by a
constant depending only on the map and the set. (This is a consequence
of a similar statement for complex analytic maps, where the upper
bound is the ``local degree''.) Note that $v$ is a regular value of
$\tG_\Delta=G_{\Delta,\lambda}$ if and only if $(\lambda,v)$ is a
regular value of the map
$(\lambda,w)\mapsto \big(\lambda, G_{\Delta,\lambda}(w)\big)$. 
Let us apply this principle to each such map.  Thus the number of the
inverse images $\tG^{-1}_\Delta(v)$ for a regular value $v$ and any
$\Delta$ is bounded from above by some constant $N$, independent of
$v$. By construction, $N$ is completely determined by $h$, which
establishes the second requirement and completes the proof of the
theorem.
\end{proof}

\section{Symplectic homology as a persistence module}
\label{sec:prelim}

\subsection{Persistence modules}
\label{sec:persistence}
Persistence modules play a central role in the statements of our main
results. In this section, closely following \cite{CGGM:Reeb}, we
define the class of persistence modules suitable for our goals and
briefly touch upon their properties. We refer the reader to
\cite{PRSZ} for a general introduction to persistence modules and
their applications to geometry and analysis, although the class of
modules they consider is somewhat more narrow than the one we deal
with here, and also to \cite{BV, CdSGO16} for some of the more general
results and further references.

\subsubsection{Basic definitions}
\label{sec:persistence-def}
Fix a field $\F$ which we will suppress in the notation. Recall that a
\emph{persistence module} $(\CV,\pi)$ is a family of vector spaces $\CV_s$
over $\F$ parametrized by $s\in \R$ together with a functorial family
$\pi$ of structure maps. These are linear maps
$\pi_{st}\colon \CV_s\to \CV_t$, where $s\leq t$ and functoriality is
understood as that $\pi_{sr}=\pi_{tr}\pi_{st}$ whenever
$s\leq t\leq r$ and $\pi_{ss}=\id$. In what follows we often suppress
$\pi$ in the notation and simply refer to $(\CV,\pi)$ as $\CV$.  In such a
general form the concept is not particularly useful and usually one
imposes additional conditions on the spaces $\CV_t$ and the structure
maps $\pi_{st}$. These conditions vary depending on the context. Below
we spell out the framework most suitable for our purposes.

Namely, we require that the following four conditions to be met:

\begin{itemize}

\item[\reflb{PM1}{{(PM1)}}] There exists a bounded from below, nowhere
  dense subset $\CS\subset \R$, called the \emph{spectrum} of $\CV$,
  such that the persistence module $\CV$ is \emph{locally constant}
  outside $\CS$, i.e., $\pi_{st}$ is an isomorphism when $s\leq t$
  are in the same connected component of $\R\setminus \CS$.

\item[\reflb{PM2}{\rm{(PM2)}}] The persistence module $\CV$ is
  \emph{q-tame}: $\pi_{st}$ has finite rank for all $s<t$.
 
\item[\reflb{PM3}{\rm{(PM3)}}] \emph{Left-semicontinuity}: For all
  $t\in\R$,
  \begin{equation}
    \label{eq:semi-cont}
    \CV_t=\varinjlim_{s<t} \CV_s.
  \end{equation}

\item[\reflb{PM4}{\rm{(PM4)}}] \emph{Lower bound}: $\CV_s=0$ when $s<s_0$
  for some $s_0\in\R$. (Throughout the paper we will assume that
  $s_0=0$.)

\end{itemize}

A few comments on this definition are in order. First, note that as a
consequence of \ref{PM1} and \ref{PM2}, $\CV_s$ is finite-dimensional
and \ref{PM3} is automatically satisfied when $s\not\in
\CS$. Furthermore, Condition \ref{PM1} does not uniquely determine the
spectrum $\CS$: it is simply a requirement that such a set
exists. While one could take the minimal set with the required
properties as the spectrum of $\CV$, which would make it unique and
determined by $\CV$, this choice is not necessarily the most natural
and we prefer to think of the choice of $\CS$ as a part of the
persistence module structure. For instance, in the setting of Example
\ref{ex:sublevels}, it is convenient to take the set of critical
values of $f$ as $\CS$, but the minimal (aka homological)
spectrum can be stricter smaller. The same phenomenon can happen for
Floer or symplectic homology persistence modules.

By Condition \ref{PM3}, a persistence module is completely determined
by its restriction to a dense subset $\Gamma\subset \R$. In other
words, once $\CV_s$ and $\pi_{st}$ are defined for $s$ and $t$ in
$\Gamma$, the persistence module can be extended to $\R$ by
\ref{PM3}. (However, in general the extension might fail to meet some
of the requirements \ref{PM1}--\ref{PM4}.) For instance, Floer or
symplectic homology persistence modules are naturally defined only
for $s\not\in\CS$, i.e., for $\Gamma=\R\setminus\CS$, and then the
definition is extended to all $s\in\R$ via \ref{PM3}. 

By \ref{PM4}, we can always assume that $s_0\leq \inf \CS$, i.e.,
$\CS$ is bounded from below. We emphasize, however, that $\CS$ is not
assumed to be bounded from above, and it is actually not in many
examples we are interested in. In what follows it will sometimes be
convenient to include $s=\infty$ by setting
$$
\CV_\infty=\varinjlim_{s\to\infty} \CV_s.
$$
Finally, in all examples we encounter here $\CS$ has zero measure and,
in fact, zero Hausdorff dimension. This fact is never directly used in the
paper, but we do use the requirement that $\CS$ is nowhere dense.

A basic example motivating requirements \ref{PM1}--\ref{PM4} is that
of the sublevel homology of a smooth function.

\begin{Example}[Homology of sublevels]
\label{ex:sublevels}
Let $M$ be a smooth manifold and $f\colon M\to \R$ be a proper smooth
function bounded from below. Set $\CV_s:=H_*\big(\{f<s\};\F\big)$ with
the structure maps induced by the inclusions. No other requirements are
imposed on $f$, e.g., $f$ need not be Morse. However, it is not hard
to see that conditions \ref{PM1}--\ref{PM4} are met with $\CS$ being
the set of critical values of $f$. We note that one can have
$\dim \CV_s=\infty$ for $s\in\CS$ already when $M=S^1$, unless $f$
meets some additional conditions on $f$, e.g., that $f$ is real
analytic or the critical points of $f$ are isolated. Moreover, it is
easy to see that $\CV$ would be q-tame even when $f$ were just
continuous.
\end{Example}

Recall furthermore that an \emph{interval persistence module}
$\F_{(a,\,b]}$, where $-\infty<a<b\leq \infty$, is defined by setting
$$
{\F_{(a,\,b]}}_s:=\begin{cases}
  \F & \text{ when } s\in (a,\,b],\\
  0 & \text{ when } s \not\in (a,\,b],
\end{cases}
$$
and $\pi_{st}=\id$ if $a<s\leq t\leq b$ and $\pi_{st}=0$
otherwise. Interval modules are examples of simple persistence
modules, i.e., persistence modules that cannot be decomposed as a
(non-trivial) direct sum of other persistence modules.

A key fact which we will use in the paper is the normal form or
structure theorem asserting that every persistence module meeting the
above conditions can be decomposed as a direct sum of a countable
collection (i.e., a countable multiset) of interval persistence
modules. Moreover, this decomposition is unique up to re-ordering of
the sum. (In fact, conditions \ref{PM1}--\ref{PM4} are far from
optimal and can be considerably relaxed.) We refer the reader
\cite[Thm.\ 3.8]{BV} and \cite{CdSGO16} for proofs of this fact for
the class of persistence modules more general than considered in this
paper and further references, and to, e.g., \cite{CZCG, CB, ZC} for
previous or related results. (Here we only note that our class of
persistence modules directly fits in the framework of \cite[Thm.\
3.8]{BV}. Furthermore, while the interval decomposition fails for
q-tame modules in general, the barcode is still defined and the
isometry theorem holds in this case; see \cite{CdSGO16,Le}.)

This multiset $\CB(\CV)$ of intervals entering this decomposition is
referred to as the \emph{barcode} of $\CV$ and the intervals occurring
in $\CB(\CV)$ as \emph{bars}. The set of end-points of the bars is the
spectrum of the barcode. Then the normal form theorem gives a
one-to-one correspondence between isomorphism classes of persistence
modules and barcodes, i.e., multisets of intervals satisfying some
natural additional conditions corresponding to \ref{PM1}--\ref{PM4}.

One of these conditions is of particular interest to us.  For
$\eps>0$, denote by $\fb_\eps(\CV,s)$ or just $\fb_\eps(s)$ the number
of bars $(a,\,b]$ in $\CB(\CV)$ with $a<s$ of length $b-a> \eps$,
counted with multiplicity. This is the key numerical invariant of
persistence modules used in this paper. Interestingly, within this
framework the definition of $\fb_\eps(s)$ can be made formally
independent of the structure theorem: the multiplicity of an interval
as a bar in the barcode, and hence $\fb_\eps(s)$, can be defined by
purely linear algebra means without making use of the theorem; see
\cite[Sec.\ 2.1 and 3]{GGM}. If the multiplicity is zero, the interval
is not a bar in the barcode.

Clearly, $\fb_\eps(s)$ is locally constant in $s$ on the complement to
$\CS$ and in $\eps>0$ on the complement of
$\CS-\CS=\{s-s'\mid s,\, s'\in \CS\}$. The latter set is closed, but
can in general have positive measure. For instance, it can be an
interval; cf.\ \cite[Sec.\ 3.2.3]{CGG:Entropy} and \cite[p.\ 87]{GO}. 
\begin{Lemma}
  \label{lemma:b-eps}
  For every $\eps>0$ and $s\in\R$, we have $\fb_\eps(s)<\infty$.
\end{Lemma}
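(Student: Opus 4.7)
The plan is to reduce the finiteness question to q-tameness (condition \ref{PM2}) by bounding $\fb_\eps(\CV,s)$ above by a finite sum of ranks of structure maps $\pi_{st}$. The key combinatorial observation, immediate from the interval decomposition of $\CV$, is that a single bar $(a,b]$ contributes exactly $1$ to $\rk \pi_{u,v}$ precisely when $a < u \leq v \leq b$. Hence a bar of length greater than $\eps$ must contribute to $\rk \pi_{u,v}$ for any sufficiently short pair $(u,v)$ it covers, and finitely many such pairs suffice to cover all the bars counted by $\fb_\eps(\CV,s)$.

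Concretely, by \ref{PM4} every bar satisfies $a \geq 0$. I would partition $[0,s]$ by finitely many points $0 = u_0 < u_1 < \cdots < u_N$ with $u_N \geq s$ and $u_{i+1} - u_i \leq \eps/2$. For any bar $(a,b]$ with $a < s$ and $b - a > \eps$, the starting point $a$ lies in some subinterval $[u_i, u_{i+1})$, and then
$$
b > a + \eps \geq u_i + \eps \geq u_{i+1} + \eps/2,
$$
so $a < u_{i+1}$ and $u_{i+1} + \eps/2 \leq b$. Therefore each such bar contributes at least $1$ to $\rk \pi_{u_{i+1},\, u_{i+1} + \eps/2}$ for its associated index $i$. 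Summing over the finitely many indices $i = 0, \ldots, N-1$ yields
$$
\fb_\eps(\CV, s) \;\leq\; \sum_{i=0}^{N-1} \rk \pi_{u_{i+1},\, u_{i+1} + \eps/2},
$$
which is finite term-by-term by \ref{PM2}, completing the proof.

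There is no substantive obstacle here; the only subtle point is the bookkeeping that translates ``the bar $(a,b]$ covers $[u,v]$'' into ``the bar contributes $1$ to $\rk \pi_{u,v}$''. This equality is immediate from the normal form theorem, and as noted in Section \ref{sec:persistence-def} it can also be established by purely linear-algebra means without invoking the structure theorem, so the lemma does not logically rely on the interval decomposition. Note also that \ref{PM1} and \ref{PM3} play no role in the argument: the conclusion holds for any q-tame persistence module bounded below.
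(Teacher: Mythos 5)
Your proof is correct. The bookkeeping works: by \ref{PM4} every bar starts at $a\geq 0$, so it starts in one of the finitely many subintervals $[u_i,\,u_{i+1})$ of your $\eps/2$-fine partition of $[0,s]$, your inequality chain shows it then contains the test segment $[u_{i+1},\,u_{i+1}+\eps/2]$, and the number of bars containing a fixed segment $[u,v]$ with $u<v$ equals $\rk \pi_{uv}$ (by the interval decomposition, or by the linear-algebra definition of multiplicity), which is finite by \ref{PM2}. Where you differ from the paper: the paper argues by contradiction, assuming infinitely many bars of length greater than $\eps$ starting below $s$, splitting into the cases where the bars lie in $(-\infty,s]$ or contain $s$ in their interior, and extracting by compactness (a convergent subsequence of right end-points) a single segment $[t',\,t]$ contained in infinitely many bars, contradicting the finiteness of $\rk\pi_{t't}$. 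Your argument is the direct, quantitative counterpart: it avoids the case analysis and the subsequence extraction and yields the explicit bound $\fb_\eps(\CV,s)\leq \sum_{i=0}^{N-1}\rk\pi_{u_{i+1},\,u_{i+1}+\eps/2}$, which the paper's qualitative argument does not produce. Both proofs use precisely \ref{PM2} and \ref{PM4}; your closing remark that boundedness from below is genuinely needed is the accurate formulation (e.g., $\bigoplus_{i\geq 1}\F_{(-i-1,\,-i]}$ is q-tame but has $\fb_{1/2}(0)=\infty$), whereas the paper's parenthetical attribution to \ref{PM2} alone is slightly loose. One trivial loose end: your partition of $[0,s]$ presupposes $s>0$; for $s\leq 0$ the count is zero since all bars start at or after $0$.
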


The lemma is quite standard (see, e.g., \cite[Cor.\ 3.5]{CdSGO16}) and
is in fact a consequence of just \ref{PM2}, and we include a proof
below only for the sake of completeness.

By the lemma, $\fb_\eps(s)$ is right semi-continuous in $\eps$ and
left semi-continuous in $s$, i.e.,
\begin{equation}
  \label{eq:bf-semicont}
  \fb_{\eps'}(s')=\fb_\eps(s)
\end{equation}
whenever $\eps'\geq \eps$ is close to $\eps$ and $s'\leq s$ is close
to $s$. Here we are using the condition that the inequalities in the
definition of $\fb_\eps(s)$ are strict.

\begin{proof}[Proof of Lemma \ref{lemma:b-eps}]
  Assume the contrary: for some $s$ and $\eps>0$, there exists an
  infinite sequence of intervals $I_i=(a_i,\, b_i]$ with $b_i<s$ and
  $b_i-a_i>\eps$. We will show that then there exists an interval
  $[t',\, t]$ contained in an infinite number of intervals
  $I_i$. Hence the rank of the map $\pi_{t't}$ is infinite which is
  impossible due \ref{PM2}.

  To this end, note that there are two, not necessarily mutually exclusive
  possibilities: an infinite subsequence of intervals is in
  $(-\infty, s]$ or an infinite subsequence of intervals contains $s$
  as an interior point. Passing to those subsequences we will treat
  these cases separately.

  In the former case, the intervals are actually contained in
  $(s_0,\, s]$ by \ref{PM4}. Passing to a subsequence again we can
  assume that $b_i\to b$ for some $b\in (s_0+\eps,\, s]$. Then for
  some $t'<t\leq b$, close to $b$, an infinite number of intervals
  contain $[t',\, t]$.  In the latter case, similarly there exists a
  pair of points $t'<t$ arbitrarily close to $s$ such that an infinite
  number of intervals contain $[t',\, t]$.
\end{proof}

It is worth keeping in mind that the number of all bars beginning
below $s$ can be infinite. We set $\fb_\eps(\infty)$ to be the total
number of bars of length greater than $\eps>0$. In general, this
number can also be infinite.

\subsubsection{Truncations and convergence}
Let us a call a persistence module $\CV$ \emph{bounded from above}
when $\CV_s=0$ for all sufficiently large $s$. Likewise, we say that a
barcode is bounded from above when all bars are finite and the
spectrum is bounded from above. The collections of bounded from above
persistence modules and barcodes carry natural finite distances, called
the \emph{interleaving distance} and, respectively, \emph{bottleneck
  distance} and the structure theorem gives an isometry; see
\cite{CdSGO16,Le,PRSZ} and references there. Furthermore, in this case
$\fb_\eps(\infty)<\infty$ for all $\eps>0$.  When the boundedness
requirement is relaxed and replaced by the condition that the spectrum
is bounded from above, the distances can be infinite but the structure
theorem still gives an isometry (the isometry theorem). The distance
between two persistence modules or barcodes is finite if and only if
they have the same number of infinite bars.

In the setting of Section \ref{sec:persistence-def}, the isometry
theorem still holds, but the interleaving distance between
persistence modules or equivalently the bottleneck distance
between their barcodes can be infinite even when the barcodes have the
same number of infinite bars.

The \emph{truncation} of $\CV$ at $s\in \R$ is the persistence module
$\Tr_s(\CV)$ defined by setting $\Tr_s(\CV)_t=\CV_t$ for $t\leq s$ and
$\Tr_s(\CV)_t=0$ for $t>s$ and modifying the structure maps in a
similar fashion. It is easy to see that
$$
\fb_\eps\big(\Tr_s(\CV),\infty\big)\leq
\fb_\eps(\CV,s)\leq\fb_\eps\big(\Tr_{s+\eps}(\CV),\infty\big)
\leq \fb_\eps(\CV,s+\eps).
$$
Here the second inequality is strict only when $\CV$ has bars of
length greater than $\eps$ beginning exactly at $s$. As a consequence,
when we are interested in the growth rate of $\fb_\eps(\CV,s)$ we can
replace it by $\fb_\eps\big(\Tr_s(\CV),\infty\big)$. Clearly,
truncation commutes with morphisms of persistence modules and there is
a natural map $\CV\to \Tr_s(\CV)$ (but not in the opposite direction).

Denote by $d$ the interleaving distance, which we allow to take an
infinite value. For every $s\in\R$, the function
$$
d_s(\CV,\CW):=d\big(\Tr_s(\CV), \Tr_s(\CV)\big)
$$
is a pseudo-distance. Clearly, $d_s\leq d_t$ when $s\leq t$. This
increasing family of functions gives rise to a metrizable topology on
the space of isomorphism classes of persistence modules. (It is
sufficient to consider the family $d_{s_k}$ for any set $\{s_k\}$
unbounded from above, e.g., a sequence $s_k\to\infty$, to get the same
topology.) In other words, for a persistence module $\CW$ let $\CU_{\eta,s}$ be
the set of all $\CV$ such that the distance between $\Tr_s(\CV)$ and
$\Tr_s(\CW)$ is less than $\eta$.  Then the collection $\CU_{\eta,s}$,
for all $s\in \R$ and $\eta>0$, is a local base at $\CW$ for this
topology.

Convergence in this topology is described as follows.  Consider a
family of persistence modules $\CV(i)$ parametrized by a partially
ordered set $I$ containing a cofinal genuinely ordered subset $A$. We
say that the family $\CV(i)$ \emph{converges} to $\CW$ if for every
$s$, the family of truncations $\Tr_s(\CV(i))$ converges to
$\Tr_s(\CW)$ with respect to the interleaving distance, i.e., for
every $s$ and every $\eta>0$ there exists $i_s(\eta)\in I$ such that
the distance between $\Tr_s(\CV(i))$ and $\Tr_s(\CW)$ is less than
$\eta$ whenever $i\in A$ and $i>i_s(\eta)$.  Note that this
convergence is not necessarily uniform, i.e., $i_s(\eta)$ can depend
on $s$, and that the notion of convergence is independent of the
choice of cofinal ordered subsystem $A$. (However, such a system must
exist for convergence to make sense.) The limit $\CW$ of a system
$\CV(i)$ is necessarily unique up to isomorphism.

\begin{Remark}
  \label{rmk:conv-spaces}
  In general, convergence of persistence modules $\CV(i)$ to $\CW$ does
  not imply that the underlying vector spaces $\CV(i)^s$ converge for
  every $s$ in any sense. For instance, fix a persistence module $\CW$
  and a monotone decreasing sequence $\eps_i\to 0+$. Then the
  sequences of shifted persistence modules
  $\CV^{+\eps_i}_s:=\CW_{s+\eps_i}$ and
  $\CV^{-\eps_i}_s:=\CW_{s-\eps_i}$ converge to $\CV$. However, while
  there are natural maps $\CV^{-\eps_i}\to \CW$ and the space $\CW_s$
  is the direct limit of the spaces $\CV^{-\eps_i}_s$, for
  $\CV^{+\eps_i}$ the maps go in the opposite direction and, moreover,
  the space $\CW_s$ need not be the inverse limit of
  $\CV^{+\eps_i}_s$. This happens already when $\CW=\F_{(a,\,b]}$ and
  $s=a$. Moreover, without a finiteness-type assumption this can also
  happen when $s\not\in \CS(\CW)$. Let, for instance,
  $\CV(i)=\F_{(a_i,\,b_i]}$ where the intervals $(a_i,\,b_i]$ are
  nested and shrink to the only common point $s$ of their interiors
  $(a_i,\,b_i)$. Then the limit of $\CV(i)$ is the zero persistence
  module. In particular, $\CW_s=0$ while $\CV(i)_s=\F$ and $s$ is
  outside the spectra of all the modules involved.
\end{Remark}  

Fix $s$ and $\eps>0$, and also $\delta>0$. Then, for a persistence
module $\CW$, we have
\begin{equation}
  \label{eq:dist-fb2}
\fb_{\eps}(\CV,s)\geq\fb_\eps(\CW,s)
\end{equation}
for any persistence module $\CV$ when $\Tr_{s+\delta}(\CV)$
is sufficiently close to $\Tr_{s+\delta}(\CW)$, depending on $s$,
$\eps$, $\delta$ and $\CW$. In other other words, $\fb_\eps(\CW,s)$ is
lower semicontinuous in $\CW$. This is a consequence of the condition
that the inequalities in the definition of $\fb_\eps(s)$ are strict
and the fact that $\fb_\eps(s)$ is finite for all $s$.

\subsection{Symplectic homology}
\label{sec:SH}
The notion of symplectic homology goes back to \cite{CFH,Vi}. In this
section we briefly review the definition and properties of filtered
symplectic homology from a persistent homology perspective, following
closely \cite{CGGM:Reeb}. Most of the material here is quite
standard. The only new point is the definition of the barcode growth
function $\fb_\eps(s)$ for star-shaped domains with not-necessarily
smooth boundary; see Section \ref{sec:C0}.  We also touch upon the
symplectic Banach--Mazur distance in Section \ref{sec:SBM}, originally
introduced by Ostrover and L. Polterovich (without the unknottedness
requirement) and then refined and studied in \cite{PRSZ, SZ-SBM,
  Us-SBM}. In Section \ref{sec:Inv} we discuss invariance of the
filtered symplectic homology. Some of these results are also new.

\subsubsection{Conventions and notations}
\label{sec:setting}
Let us begin by spelling out our conventions and notation on the
symplectic dynamics side, which are essentially identical to the ones
used in \cite{CGG:Mult, CGGM, CGGM:Reeb, GG:LS}. 

Let, as in Section \ref{sec:results}, $\alpha$ be the contact form on
the boundary $M=\p W$ of a Liouville domain $W^{2n\geq 4}$. We will
also use the same notation $\alpha$ for a primitive of the symplectic
form $\omega$ on $W$. The grading of Floer or symplectic homology is
inessential for our purposes and we make no assumptions on $c_1(TW)$.
As usual, denote by $\WW$ the symplectic completion of $W$, i.e.,
$$
\WW=W\cup_M M\times [1,\,\infty)
$$
with the symplectic form $\omega=d\alpha$ extended from $W$ to
$M\times [1,\infty)$ as
$$
\omega := d(r\alpha),
$$
where $r$ is the coordinate on $[1,\,\infty)$. Sometimes it is
convenient to have the function $r$ also defined on a collar of
$M=\p W$ in $W$. Thus we can think of $\WW$ as the union of $W$ and
$M\times [1-\eta,\,\infty)$ for small $\eta>0$ with
$M\times [1-\eta,\, 1]$ lying in $W$ and the symplectic form given by
the same formula.

Unless specifically stated otherwise, most of the Hamiltonians
$H\colon \WW\to \R$ considered in this paper are constant on $W$ and
depend only on $r$ outside $W$, i.e., $H=h(r)$ on
$M\times [1,\,\infty)$, where the $C^\infty$-smooth function
$h\colon [1,\,\infty)\to \R$ is required to meet the following three
conditions:
\begin{itemize}
\item $h$ is strictly monotone increasing;
\item $h$ is convex, i.e., $h''\geq 0$, and $h''>0$ on $(1,\, \rmax)$
  for some $\rmax>1$ depending on~$h$;
\item $h(r)$ is linear, i.e., $h(r)=ar-c$, when $r\geq \rmax$.
\end{itemize}
In other words, the function $h$ changes from a constant on $W$ to
convex in $r$ on $M\times [1,\, \rmax]$, and strictly convex on the
interior, to linear in $r$ on $M\times [\rmax,\, \infty)$.

We will refer to $a$ as the \emph{slope} of $H$ (or $h$) and write
$a=\slope(H)$. The slope is often, but not always, assumed to be
outside the action spectrum of $\alpha$, i.e.,
$a\not\in\CS(\alpha)$. We call $H$ \emph{admissible} if
$H|_W=\const<0$ and \emph{semi-admissible} when $H|_W\equiv 0$. (This
terminology differs from the standard usage, and we emphasize that
\emph{admissible Hamiltonians are not semi-admissible}.) When $H$
satisfies only the last of the three conditions, we call it
\emph{linear at infinity}.

The difference between admissible and semi-admissible Hamiltonians is
just an additive constant: $H- H|_W$ is semi-admissible when $H$ is
admissible. Hence the two Hamiltonians have the same filtered Floer
homology up to an action shift. For our purposes, semi-admissible
Hamiltonians are notably more suitable due to the $H|_W\equiv 0$
normalization.

The Hamiltonian vector field $X_H$ is determined by the condition
$$
\omega(X_H,\, \cdot)=-dH,
$$
and, on $M\times [1,\,\infty)$, 
$$
X_H=h'(r) R_\alpha,
$$
where $R_\alpha$ is the Reeb vector field. We denote the Hamiltonian
flow of $H$ by $\varphi_H^t$, the Reeb flow of $\alpha$ by
$\varphi_\alpha^t$, where $t\in \R$, and the Hamiltonian
diffeomorphism generated by $H$ by $\varphi_H:=\varphi_H^1$.

Every $T$-periodic orbit $z$ of the Reeb flow with $T<a=\slope(H)$
gives rise to a 1-periodic orbit $\tz=(z,r_*)$ of the flow of $H$ with
$r_*$ determined by the condition
\begin{equation}
  \label{eq:level}
h'(r_*)=T.
\end{equation}
Clearly, $\tz$ lies in the shell $1<r<\rmax$, and we have a one-to-one
correspondence between 1-periodic orbits of $H$ and the periodic
orbits of $\varphi_\alpha^t$ with period $T<a$ whenever
$a\not\in \CS(\alpha)$.

The action functional $\CA_H$ is defined by
$$
\CA_H(\gamma)=\int_\gamma\hat{\alpha}-\int_{S^1} H(\gamma(t))\, dt,
$$
where $\gamma\colon S^1=\R/\Z\to \WW$ is a smooth loop in $\WW$ and
$\hat{\alpha}$ is the Liouville primitive $\alpha$ of $\omega$ on $W$
and $\hat{\alpha}=r\alpha$ on $M\times [1-\eta,\,\infty)$ for a
sufficiently small $\eta>0$. More explicitly, when
$\gamma\colon S^1\to M\times [1,\,\infty)$, we have
$$
\CA_H(\gamma)= \int_{S^1} r(\gamma(t))\alpha\big(\gamma'(t)\big)\, dt
- \int_{S^1} h\big(r(\gamma(t))\big)\, dt.
$$
When $H'\leq H$ are two Hamiltonians linear at infinity we have
naturally defined continuation maps $\HF^s(H')\to \HF^s(H)$.

\subsubsection{Basic definitions and facts}
\label{sec:SH-def}
Denote by $\HF^s(H)$ the filtered Floer homology of a Hamiltonian
$H$ on $\WW$ which we only require to be linear at infinity for this
homology to be defined and have usual properties.  The
\emph{symplectic homology} $\SH^s(\alpha)$, where $s>0$, is
defined as
\begin{equation}
  \label{eq:SH}
\SH^s(W):=\varinjlim_H \HF^s(H),
\end{equation}
where traditionally the limit is taken over all Hamiltonians linear at
infinity and such that $H|_W<0$. 

In \eqref{eq:SH} we could have required that $H|_W\leq 0$ rather than
that $H|_W<0$, or equivalently allowed $H$ to be semi-admissible or
admissible. This would result in the same groups $\SH^s(W)$.  Indeed,
since admissible (but not semi-admissible) Hamiltonians form a
co-final family, we can limit $H$ to this class. Furthermore, let $H$
be semi-admissible.  Pick two sequences of positive numbers:
$a_i\to\infty$ and $a_i\to 0$. Then the sequence $H_i=a_i H-\eps_i$ is
co-final in the class of admissible Hamiltonians.

The filtered symplectic homology spaces $\SH^s(W)$ form a persistence
module, denoted here by $\SH(W)$ or $\SH(\alpha)$, in the sense of
Section \ref{sec:persistence} with $\CS=\{0\}\cup\CS(\alpha)$; see
\cite{CGGM:Reeb}. (Note that here we do not use the logarithmic scale
$\ln s$ as in, e.g., \cite{PRSZ} or \cite{Hu}.)  Hence we have the
barcode growth function $\fb_\eps(s)$ associated with a Liouville
domain $W$. When we need to emphasize the dependence of $\fb_\eps(s)$
on $(W,\alpha)$ we will write $\fb_\eps(W,s)$.

The definition of symplectic homology via a direct limit,
\eqref{eq:SH}, over admissible or even semi-admissible Hamiltonians is
quite inconvenient for our purposes. It turns out that the vector
space $\SH^s(W)$ can be directly identified with the Floer homology
of a suitable semi-admissible Hamiltonian.

\begin{Proposition}[Cor.\ 3.7, \cite{CGGM:Reeb}]
 \label{prop:sympl-Fl} 
 For any semi-admissible Hamiltonian $H$ with 
 $\slope(H)=s$,
 $$
\SH^s(\alpha)\cong \HF(H).
$$
Moreover, whenever $H'\leq H$ are semi-admissible with $s'=\slope(H')$
and $s=\slope(H)$, the diagram
$$
\begin{tikzcd}[row sep=large]
&\SH^{s'}(\alpha)
\arrow[r,"\cong"]
\arrow[d]
&
\HF(H')
\arrow[d]
\\
&
\SH^{s}(\alpha)
\arrow[r,"\cong"]
&
\HF(H)
\end{tikzcd}
$$
commutes, where the left vertical arrow is the structure or
``inclusion'' map and the right vertical arrow is the continuation
map.
\end{Proposition}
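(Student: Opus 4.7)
My plan is to compute the direct limit $\SH^s(\alpha)=\varinjlim_K\HF^s(K)$ by exhibiting a convenient cofinal family of admissible Hamiltonians $K_n$ built around $H$, and identifying each $\HF^s(K_n)$ with $\HF(H)$ through a generators/differentials match. Given semi-admissible $H=h(r)$ with slope $s\notin\CS(\alpha)$, I take $K_n$ to be admissible with $K_n|_W=-\delta_n$ for $\delta_n\to 0+$, agreeing with $H-\delta_n$ on the region $\{r\leq\rmax\}$, and extended past $\rmax$ by a strictly convex profile whose slope at infinity is $A_n\to\infty$. Such a sequence is cofinal in the direct system defining $\SH^s(\alpha)$.

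To match generators, I would analyse the 1-periodic orbits of $K_n$ with action $<s$ via a Legendre transform computation. The orbits split into (i) constants in $W$, contributing the Morse stratum after a small Morse--Bott perturbation, and (ii) Reeb-type orbits $\tz=(z,r_*)$ with $T=k_n'(r_*)$ and action $r_*T-k_n(r_*)$. Using strict convexity and the fact that $k_n$ coincides with $h$ on $\{r\leq\rmax\}$, the orbits with action strictly less than $s$ are precisely those with $r_*<\rmax$, i.e., the orbits inherited from $H$ (with actions shifted by $\delta_n$); any Reeb-type orbit of $K_n$ at $r_*>\rmax$ has action at least $c=s\rmax-h(\rmax)\geq s$, so it falls outside the filtration window. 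Hence $\CF^s(K_n)$ has the same generators as $\CF(H-\delta_n)$. For differentials, the standard Floer-theoretic maximum principle, together with the cylindrical structure of $K_n$ at infinity and an almost complex structure of SFT type there, confines any Floer cylinder asymptotic to orbits of action $<s$ to $\{r\leq\rmax\}$. On this region $K_n$ and $H-\delta_n$ coincide, so the filtered differential of $\CF^s(K_n)$ is exactly the differential of $\CF(H-\delta_n)$. Since the additive shift $\delta_n$ is invisible at the level of Floer homology, passing to the colimit yields $\SH^s(\alpha)=\varinjlim_n\HF^s(K_n)=\HF(H)$.

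Naturality for $H'\leq H$ semi-admissible with slopes $s'\leq s$ follows by running the same construction in parallel with $K'_n\leq K_n$ and a monotone homotopy between them that is linear at infinity: the induced morphism of cofinal direct systems produces a commuting square in which the left column is the structure map of $\SH$ and the right column is the continuation $\HF(H')\to\HF(H)$. The main technical obstacle is the maximum-principle step confining action-$<s$ Floer cylinders (and continuation cylinders) to $\{r\leq\rmax\}$; this requires the homotopy to be monotone in $r$ past $\rmax$ and a compatible family of SFT-type almost complex structures on the cylindrical end, so that no continuation cylinder can escape to large $r$ without violating the action bound. Once this confinement is in place, the remainder of the argument is a formal cofinality computation for direct limits of filtered Floer homologies.
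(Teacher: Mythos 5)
The paper does not prove this proposition itself --- it quotes it from \cite{CGGM:Reeb} (Cor.\ 3.7 there, cf.\ Thm.\ 3.5) --- so your argument can only be judged on its own terms, and as written it has two related gaps.

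First, your family $K_n$ is not cofinal in the directed system defining $\SH^s(\alpha)$. Since each $K_n$ agrees with $H-\delta_n$ on $\{r\leq \rmax\}$, the $K_n$ are uniformly bounded above on that compact region, whereas the system contains Hamiltonians that are arbitrarily large there (for example the family $aH-\eps$ with $a\to\infty$, which the paper points out is cofinal). Hence no $K_n$ dominates such Hamiltonians, and $\varinjlim_n \HF^s(K_n)$ only maps to $\SH^s(\alpha)$; it does not compute it without a further (missing) argument. Second, and more seriously, the generator identification is false: it is not true that every $1$-periodic orbit of $K_n$ with $r_*<\rmax$ has action $<s$. The action of the orbit at level $r_*$ is the Legendre transform $r_*h'(r_*)-h(r_*)$, which increases to $c=s\,\rmax-h(\rmax)\geq s$ as $r_*\to\rmax$; for a semi-admissible $H$ whose profile stays nearly flat up to a large $\rmax$ and then turns up, an orbit of small Reeb period sitting at large $r_*$ has action far above $s$ (e.g.\ on the ball with $s=3.5\pi$, $\rmax=10$ and $h'\approx 0$ until $r=8$, the simple orbit appears at $r_*>8$ with action $>6\pi$). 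Consequently $\CF^s(K_n)$ is a \emph{proper} subcomplex of the perturbed $\CF(H-\delta_n)$, and what your limit actually computes is $\varinjlim_{t<s}\HF^t(H)$, not $\HF(H)$. These two objects differ in general --- this is precisely the point of the Remark following the proposition, which warns that $\SH^t(\alpha)\neq \HF^t(H)$ and that only a reparametrized identification $\SH^t(\alpha)=\HF^{\fa(t)}(H)$ holds.

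The missing ingredient is the deformation of $H$ \emph{within the class of semi-admissible Hamiltonians of fixed slope} $s\notin\CS(\alpha)$: continuation maps in that class are isomorphisms on the total Floer homology, so one may replace $H$ by a ``sharp'' profile with $\rmax$ close to $1$, for which all orbits do have action $<s$ (the spectrum below $s$ is bounded away from $s$); only then does an action-window/confinement argument of the kind you describe identify the filtered limit with $\HF(H)$, and a genuine cofinality or stabilization argument (against Hamiltonians that dominate everything on compact sets) identifies it with $\SH^s(\alpha)$. Your maximum-principle and naturality discussion is standard and fine, but both halves of your computation need the fixed-slope isomorphism step, which is absent from the proposal.
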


Fixing $s>0$, consider now a $C^\infty$-small non-degenerate perturbation $\talpha$
of $\alpha$ and a 1-periodic in time perturbation $\tH$ of a
semi-admissible Hamiltonian $H$ for $\talpha$ such that $\tH$ is still
linear at infinity with slope $s=\slope(H)$. Then, with a generic
almost complex structure $J$ fixed, we have the Floer complex of $\tH$
defined, which we will refer to as a \emph{symplectic homology
  complex} with action less than or equal to $s$. By Proposition
\ref{prop:sympl-Fl}, the homology of this complex is $\SH^s(W)$.
We will call the perturbations and $J$ \emph{auxiliary structures}.

\begin{Remark}
  We do not claim that $\SH^t(\alpha)=\HF^t(H)$ for all
  $t\leq s$ in the setting of Proposition
  \ref{prop:sympl-Fl}. However, we can ensure that
  $\SH^t(\alpha)=\HF^{\fa(t)}(H)$ for a monotone increasing function
  $\fa$ which can be made $C^0$-close to the identity by a suitable
  choice of $H$, approximating the continuous Hamiltonian which is
  identically zero on $W$ and the linear function $s(r-1)$ on
  $M\times [1,\infty)$; see \cite[Thm.\ 3.5]{CGGM:Reeb}. Furthermore,
  it is worth keeping in mind that the reparametrized persistence
  module $\HF^{\fa(t)}(H)$ is not the truncation
  $\Tr_s(\SH(\alpha))$. The reason is that short bars passing through
  $s$ get cut at $s$ in the truncation but give rise to infinite bars
  in the Floer homology.
\end{Remark}  

\subsubsection{Symplectic Banach--Mazur distance}
\label{sec:SBM}
The symplectic Banach--Mazur distance is a symplectic analogue of the
Banach--Mazur distance in convex geometry. It can also be thought of
as an analogue of the Hofer distance for Reeb flows. Several variants
of the distance have been introduced recently and the notion is quite
nuanced; see \cite{PRSZ, SZ-SBM,Us-SBM} and references therein. Here
we closely follow \cite{PRSZ}. The most essential departure from that
treatment is that here as in \cite{SZ-SBM,Us-SBM} we do not require
Liouville maps to be defined globally and be compactly
supported. These conditions are not essential with our definition of
the symplectic homology. In that sense our approach is more similar to
that in \cite{Us-SBM} where the distance is defined for open domains.

For the sake of simplicity, we limit our attention to bounded
star-shaped domains in $\R^{2n}$. However, the construction and basic
results extend word-for-word to other Liouville manifolds of finite
type with the Liouville flow for time $\ln \lambda$ utilized to
define the rescaling $\lambda U$ of $U$; see \cite{PRSZ}.

We start with several auxiliary definitions and notation. For a
continuous function $f\colon S^{2n-1}\to \R$, we set $W_f$ to be the
star-shaped domain $\{r\leq f(\theta)\}\subset \R^{2n}$. We will adopt
this as the definition of a closed bounded star-shaped domain.  This
is consistent with the notation from Section \ref{sec:toric} with $f$
used in place of the composition $f\circ \mu$.

Let $K$ be a compact set and $U$ an open subset of $\R^{2n}$. An exact
symplectic embedding from $K$ to $U$ is a smooth exact
symplectomorphism from a small neighborhood of $K$ into $U$. In our
setting all domains are contractible, and hence the exactness requirement
is satisfied automatically. For $\lambda\in (0,\infty)$, we will also
use the notation $\lambda$ for the rescaling map $z\mapsto \lambda z$
(the Liouville flow) and for a subset $U\subset \R^{2n}$ we set
$\lambda(U)=\{\lambda z\mid z\in U\}$.

Let $U$ and $W$ be two closed compact star-shaped domains with not
necessarily smooth boundary. Denote by $\mathring{U}$ and
$\mathring{W}$ their interiors. We say that $\lambda>1$ is
\emph{$(U,W)$-admissible} if there exist (automatically exact)
symplectic embeddings $\varphi\colon \lambda^{-1} U\to \mathring{W}$
and $\psi\colon W\to \lambda \mathring{U}$ such that the composition
$\psi\varphi\colon \lambda^{-1} U\to \lambda\mathring{U}$ is isotopic
to the natural inclusion in the class of (exact) symplectic embeddings
of $\lambda^{-1} U$ into $\lambda\mathring{U}$. The latter isotopy
condition is referred to as \emph{unknottedness}, cf.\ \cite{GU,Us-SBM}.) We set
$$
\delta(U,W):=\inf\big\{\ln \lambda\mid \lambda \text{ is 
    $(W,U)$-admissible}\big\}.
  $$
  The unknottedness requirement is essential but makes $\delta(U,W)$
  asymmetric; cf.\ \cite{GU,Us-SBM}. By definition, the
  \emph{symplectic Banach--Mazur} distance between $U$ and $W$ is
$$
\dSBM(U,W):=\max\{\delta(U,W), \delta(W,U)\}.
  $$

  For instance, $\dSBM(U,\lambda U)=|\ln \lambda|$. One can show that
  $\dSBM$ is indeed a pseudo-distance on the set of closed compact
  star-shaped domains and, in particular, the triangle inequality
  holds. However, $\dSBM$ is only a pseudo-distance: it is easy to see
  that $\dSBM(W,W')=0$ whenever $W$ and $W'$ are (exact)
  symplectomorphic. (When $W$ and $W'$ are not smooth, this
  requirement should be read as that their small open neighborhoods
  are exact symplectomorphic.) We will refine this observation in
  Lemma \ref{lemma:dist}.  Furthermore, $\dSBM$ is an invariant of
  compactly supported symplectomorphisms of $\R^{2n}$.

  The feature of the symplectic Banach--Mazur distance essential for
  us is that on logarithmic scale it bounds the interleaving distance
  between symplectic homology persistence modules for star-shaped
  domains with smooth boundary, and hence the
  bottleneck distance between their barcodes; see \cite[Thm.\
  9.4.7]{PRSZ} and also \cite{GU,SZ-SBM,Us-SBM} for closely related
  results. Thus, with our conventions
  \begin{equation}
    \label{eq:int-SBM}
    d_s\big(\SH(W),\SH(U)\big)
    :=d\big(\Tr_s(\SH(W)),\Tr_s(\SH(U))\big)\leq  s\big(e^{\dSBM(W,U)}-1\big),
  \end{equation}
  when $W$ and $U$ are smooth. As a consequence of \eqref{eq:dist-fb2},
    \begin{equation}
    \label{eq:dist-fb3}
    \fb_\eps(U,s)\geq \fb_\eps(W,s)
\end{equation}
when $U$ is sufficiently $\dSBM$-close to $W$, depending on $W$,
$\eps$ and $s$.  In particular, for smooth
domains, 
   \begin{equation}
    \label{eq:dist-fb4}
    \fb_\eps(U,s) = \fb_\eps(W,s) \textrm{ when }\dSBM(U,W)=0.
  \end{equation}

Furthermore, fix $\eps>0$, $0<\delta<\eps$ and $s$. Then
\begin{equation}
  \label{eq:fb-mon1}
\fb_{\eps+\delta}(W,s-\delta)\leq \fb_\eps(U,s)\leq \fb_{\eps-\delta}(W,s+\delta)
\end{equation}
for any two star-shaped domains $U$ and $W$ with smooth boundary such that
$\dSBM(U,W)$ is sufficiently small and by symmetry
\begin{equation}
  \label{eq:fb-mon2}
\fb_{\eps+\delta}(U,s-\delta)\leq \fb_\eps(W,s)\leq \fb_{\eps-\delta}(U,s+\delta).
\end{equation}

As a particular case of \eqref{eq:int-SBM} and \eqref{eq:dist-fb3}, we have

  $$
  d_s\big(\SH(W_f),\SH(W_g)\big)
  :=d\big(\Tr_s(\SH(W_f)),\Tr_s(\SH(W_g))\big)\leq s(e^\eta-1),
  $$
  where $f$ and $g$ are smooth and $\max |\ln (f/g)|<\eta$, and
$$
  \fb_\eps(W_g,s)\geq \fb_\eps(W_f,s)
$$
when $\max |\ln (f/g)|$ is small, depending on $f$, $\eps$ and $s$.

\subsubsection{Non-smooth domains}
\label{sec:C0}  
Assume now that $W=W_f$, where $f$ is just continuous. (As in the
previous section, we do not require $W$ to be toric.) Set $U=W_{g_k}$,
where the functions $g_k$ are smooth and $g_k\stackrel{C^0}{\to}
f$. Then $\dSBM (W_{g_k},W_f)\to 0$. More specifically,
$\dSBM (W_{g_k},W_f) \leq \max|\ln(f/g_k)|$. In particular, any
star-shaped domain can be approximated by star-shaped domains with
smooth boundary and the sequence $W_{g_k}$ is Cauchy with respect to
$\dSBM$. Moreover, when needed, we can require that $g_k<f$, and hence
$W_{g_k}\subset \mathring{W}_f$.

Then the filtered symplectic
homology $\SH^s(W_f)$ can be defined by continuity, but we see no
reason why these spaces should form a persistence module in the sense
of Section \ref{sec:persistence-def}. Instead, we do this for
$\fb_\eps$. Namely, set
\begin{equation}
  \label{eq:fb-C0}
\fb_\eps(W,s):=\liminf_{U} \fb_\eps(U,s),
\end{equation}
where the lower limit as taken over all star-shaped domains with
smooth boundary as $\dSBM(U,W)\to 0$. By \eqref{eq:fb-mon1},
$\fb_\eps(W,s)<\infty$. More explicitly, \eqref{eq:fb-C0} should be
read as
$$
\fb_\eps(W,s)=\inf_{\{U_k\}}\lim_{k\to \infty} \fb_\eps(U_k,s),
$$
where $\{U_k\}$ ranges through all sequences of star-shaped domains with
smooth boundary such that $\dSBM(U_k, W)\to 0$ and
$\fb_\eps(U_k,s)$ converges. The infimum is attained and can be replaced by the
minimum. Equivalently,
$$
\fb_\eps(W,s)=\sup_{\delta>0}\inf\big\{\fb_\eps(U,s)\mid \textrm{$U$ is
  smooth and $\dSBM(W,U)<\delta$}\big\}.
$$
When $W$ is also smooth, this definition agrees with the original one
by \eqref{eq:dist-fb3} or \eqref{eq:dist-fb4}.  Note also that
$\fb_\eps(W,s)<\infty$ by, e.g., \eqref{eq:fb-mon2} even when $W$ is
not smooth.

As an immediate consequence of the definition,
\begin{equation}
  \label{eq:dSBM-fb}
  \fb_\eps(W,s)=\fb_\eps(W',s)\textrm{ when } \dSBM(W,W')=0.
\end{equation}
Clearly,
\begin{equation}
  \label{eq:fb-C01}
\fb_\eps(W_f,s)\leq \liminf_{g\to
  f}\fb_\eps(W_g,s),
\end{equation}
where $g$ is $C^\infty$-smooth and $g\stackrel{C^0}{\to} f$. We do not
know if this inequality can be strict, i.e., if it is not always
equality. The reason for choosing the lower limit in \eqref{eq:fb-C0}
is the proof of Theorem \ref{thm:toric-nonsmooth} giving an upper
bound on $\fb_\eps$ for a particular collection of smooth
approximations. With \eqref{eq:fb-C01}, we have the barcode entropy
$\hbar(W)$ of $W$ and the low barcode entropy
$\hbar^{\scriptstyle{Slow}}(W)$ defined by \eqref{eq:hbar} and
\eqref{eq:hbar-slow}. However, there is no reason to think that
$\hbar(W)$ is always finite when $W$ is not smooth.

For domains with non-smooth boundary, $\fb_\eps(W,s)$ inherits
some monotonicity and continuity properties but these are not as
strong as for smooth domains. Namely, by using \eqref{eq:fb-mon1} and
\eqref{eq:fb-mon2}, it is not hard to show that
$$
\sup_{\eps'>\eps}\fb_{\eps'}(W, s_0)\leq \fb_\eps(W, s_1)\textrm{ when
} s_0<s_1
$$
and 
$$
\sup_{s'<s}\fb_{\eps_0}(W, s')\leq \fb_{\eps_1}(W, s)\textrm{ when
} \eps_1<\eps_0.
$$ 
We do not know if $\fb_\eps(W,s)$ is monotone in $\eps$ for $s$ fixed
and in $s$ for $\eps$ fixed when $W$ is not smooth.

Likewise, we do not know if the limit exists in \eqref{eq:fb-C0} or
how far the upper limit can be from the lower limit when $W$ is not
smooth. However, from again \eqref{eq:fb-mon1} and
\eqref{eq:fb-mon2} it is easy to see that
\begin{equation}
  \label{eq:fb-infi-sup}
\limsup_{U\to W} b_{\eps'}(W, s')\leq \fb_\eps(W,s)
\textrm{ when
} \eps'>\eps\textrm{ and } s'<s.
\end{equation}
As a consequence, replacing the lower limit by the upper
limit in \eqref{eq:fb-C0} would not affect the barcode
entropy and the slow barcode entropy of $W$, but might affect
$\eps$-entropies.

\begin{Remark}
  \label{rmk:limit-barcode}
  There is some flexibility in defining $\fb_\eps(s)$ as a limit and
  the definition above is perhaps the most naive. A viable alternative
  to \eqref{eq:fb-C0} would be to first define the barcode of $W$ as
  in \cite{BHS,LRSV,KS} as the limit over smooth approximations by
  using the symplectic Banach--Mazur distance in place of the Hofer or
  spectral norm. The resulting barcode would then belong to a certain
  completion of the space of ``locally finite'' barcodes with respect
  to the sequence of truncated bottleneck distances. It would have a
  well-defined growth function $\fb_\eps(s)$ which could be different
  from \eqref{eq:fb-C0} and could possibly have better continuity or
  invariance properties. However, this would be a more involved
  construction and we believe that, by \eqref{eq:fb-infi-sup}, the
  function would grow at the same pace as \eqref{eq:fb-C0}, adjusting
  for $\eps$, and give rise to exactly the same (slow) entropy. It
  would be interesting though to see what the ``action spectrum'' of
  such a limiting barcode could look like when the function $f$
  defining $W$ is pathologically non-smooth.
\end{Remark}  

\subsection{Invariance}
\label{sec:Inv}
The filtered symplectic homology is an invariant of the exact
symplectomorphism class of the interior of the Liouville domain. The
first variant of this statement was proved in \cite{CFHW}, and since
then the fact has been revisited several times in different contexts;
see, e.g., \cite{Gu,Hu}. Here we follow the suit.

\begin{Proposition}
  \label{prop:Inv1}
  Let $(W, \alpha)$ and $(W',\alpha')$ be Liouville domains with
  smooth boundary. Assume that the interiors $\mathring{W}$ and
  $\mathring{W}'$ are exact symplectomorphic.  Then
$$
\SH(W)=\SH(W')
$$
as persistence modules.
\end{Proposition}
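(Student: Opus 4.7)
The plan is to exhibit both $\SH(W)$ and $\SH(W')$ as persistence modules canonically determined by their interiors, via an exhaustion by Liouville subdomains that is transported through $\Phi$. Let $Z$ denote the Liouville vector field on $\WW$. For each $\lambda \in (0,1)$, set $W_\lambda := \varphi_Z^{\ln\lambda}(W) \subset \mathring{W}$, the image of $W$ under the backward Liouville flow. These $W_\lambda$ are nested Liouville subdomains with contact-type boundary inside $\mathring{W}$, and their union is $\mathring{W}$. The Liouville flow induces an exact symplectomorphism between the completions of $W$ and $W_\lambda$ that rescales the action by $\lambda$, so that $\SH(W_\lambda)$ is canonically isomorphic to $\SH(W)$ after the action reparametrization $s \mapsto \lambda s$.

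Next, the exact symplectomorphism $\Phi \colon \mathring{W} \to \mathring{W}'$ sends each $W_\lambda$ to a compact submanifold $\Phi(W_\lambda) \subset \mathring{W}'$. Although the pushforward primitive $\Phi_*\alpha$ need not agree with $\alpha'$, the two differ by an exact one-form, which does not affect filtered symplectic homology; hence $\SH(W_\lambda) \cong \SH(\Phi(W_\lambda))$ as persistence modules, after the same action rescaling. Using an analogous exhaustion $W'_\nu := \varphi_{Z'}^{\ln \nu}(W')$ of $\mathring{W}'$, one can sandwich $W'_{\nu_1} \subset \Phi(W_\lambda) \subset W'_{\nu_2}$ for $\nu_1, \nu_2 \in (0,1)$ depending on $\lambda$, and invoke the Viterbo-type transfer morphisms to compare $\SH(\Phi(W_\lambda))$ with $\SH(W')$.

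Taking the limit $\lambda \to 1^-$ then produces the desired identification $\SH(W) \cong \SH(W')$: on the $W$ side, the exhaustion recovers $\SH(W)$ itself with the rescaling factor tending to $1$; on the $W'$ side, the sandwich inclusions force the transfer maps to become isomorphisms in the limit. Compatibility of structure maps across $\lambda$ is inherited from the fact that the exhaustion $\{W_\lambda\}$ is monotone in $\lambda$ and the Liouville flow is a one-parameter group.

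The principal difficulty lies in the sandwich step: one must track the persistence-module structure through the Viterbo transfer morphisms between $W'_{\nu_1}$, $\Phi(W_\lambda)$, and $W'_{\nu_2}$, and check that as $\lambda \to 1^-$ these maps assemble into an isomorphism of full persistence modules rather than merely of finite truncations. A secondary subtlety is that $\Phi$ does not generally intertwine the Liouville vector fields, so $\Phi(W_\lambda)$ is not a priori a Liouville subdomain of $\mathring{W}'$ for $\alpha'$; this is handled via the observation that filtered symplectic homology is insensitive to exact modifications of the Liouville primitive, as is implicit in the characterization of $\SH$ through semi-admissible Hamiltonians in Proposition \ref{prop:sympl-Fl}.
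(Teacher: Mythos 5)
Your route is genuinely different from the paper's, and as written it has two real gaps. For contrast: the paper avoids Viterbo transfer altogether. It fixes $C>0$ and the class $\CH_C$ of Hamiltonians on $W$ with $0\leq H\leq C$ and $H\equiv C$ near $\p W$, shows (arguing as in \cite{CGGM}) that $\SH^s(W)=\varprojlim_{H\in\CH_C}\HF^s(H)$ for $s<C$, and observes that the right-hand side is manifestly an invariant of the exact symplectomorphism type of $\mathring{W}$, since exactness preserves the action filtration. This gives the identification of persistence modules directly, with no exhaustion, no transfer morphisms and no limiting procedure; the paper explicitly notes that its argument does not use Viterbo transfer even implicitly.

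The first gap in your argument is the transfer step itself. Since $\Phi$ does not intertwine the Liouville structures, $\p\Phi(W_\lambda)$ is in general not of contact type for $\alpha'$, so $\Phi(W_\lambda)$ is not a Liouville subdomain of $(W',\alpha')$; what you need are \emph{filtered} transfer morphisms for such generalized (merely exact) codimension-zero embeddings, compatible with the persistence structure maps and functorial under composition, so that the composite $\SH(W'_{\nu_2})\to\SH(\Phi(W_\lambda))\to\SH(W'_{\nu_1})$ can be identified with the natural rescaling map. This is genuine machinery -- essentially the content of \cite{GU,SZ-SBM,Us-SBM} and \cite[Thm.\ 9.4.7]{PRSZ}, developed there for star-shaped domains in $\R^{2n}$ rather than arbitrary Liouville domains -- and it is not supplied by the observation that exact changes of the primitive do not alter actions of closed loops, nor by Proposition \ref{prop:sympl-Fl}, which only computes $\SH^s$ from a single semi-admissible Hamiltonian. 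The second gap is the limit step, which you flag but do not close: the sandwich argument yields, for every $\delta>0$, a $\delta$-interleaving of truncations, i.e.\ it shows that $\SH(W)$ and $\SH(W')$ are at interleaving distance zero. That is strictly weaker than the conclusion of the Proposition; to upgrade it to an isomorphism of persistence modules you must still argue that, within the class \ref{PM1}--\ref{PM4}, distance zero forces equal barcodes (via the structure theorem together with the finiteness $\fb_\eps(s)<\infty$ of Lemma \ref{lemma:b-eps}), or else show that your transfer maps actually converge to an isomorphism; neither is done. So the approach can plausibly be completed with the Banach--Mazur/interleaving machinery, but as it stands its two central steps are missing, whereas the paper's inverse-limit argument bypasses both.
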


One consequence of the proposition and the results in \cite{CGGM:Reeb}
is, for instance, that in dimension four the topological entropy of the
Reeb flow on $\p W$ is an invariant of the interior up to exact
symplectomorphisms; see Corollary \ref{cor:htop}.

Note that the exactness requirement that can be dropped if we restrict
the symplectic homology to periodic orbits contractible in the
domain. The proof of the proposition, which we outline below, is
somewhat different from the proofs of other similar statements in
that, for instance, it does not use Viterbo transfer even
implicitly. However, since this material is rather standard and well
known to experts we only briefly sketch the argument.

\begin{proof}[Outline of the proof]
  Let $W$ be a Liouville domain with smooth boundary $M=\p W$.  Fix a constant
  $C>0$ and consider the class $\CH_C$ of autonomous Hamiltonians $H$
  on $W$ meeting the following requirements:
  \begin{itemize}
\item $H\equiv C$ near $M$ and  (in particular, $H$ is constant near
  $M$); and

\item $0\leq H\leq C=\max H$.  
  \end{itemize}    
  As a consequence of these requirements -- in fact, the first one
  would be sufficient -- the filtered Floer homology $\HF^s(H)$ is
  defined, at least when $s< C$, and independent of the background
  choice of the almost complex structure; see \cite{CFH}. Moreover,
  for $H_1\geq H_0$ in $\CH_C$ we have a well-defined continuation
  homomorphism $\HF^s(H_0)\to \HF^s(H_1)$ with standard functoriality
  properties. Arguing as in \cite[Sec.\ 3]{CGGM}, it is not hard to
  show that
  \begin{equation}
    \label{eq:SH2}
    \SH^s(W)=\varprojlim_{H\in \CH_C} \HF^s(H),
  \end{equation}
  provided that $s<C$. The right had side of \eqref{eq:SH2}
  is invariant under exact symplectomorphisms of the interior of
  $W$. (The exactness guarantees that the action filtration is
  preserved. An additional point of \eqref{eq:SH2} not used here is
  that the right hand is independent of $C$.) The proposition follows.
  \end{proof}

  \begin{Remark}
    \label{rmk:SH-C0}
    The right hand side of \eqref{eq:SH2} is the definition of the
    filtered symplectic homology used in \cite{CFHW}. Accounting for
    the differences in conventions, it is also essentially equivalent
    to the one adopted in \cite{PRSZ}. One could also employ it to
    define the filtered symplectic homology of $W$ when $W$ is not
    smooth. Equivalently, the homology in the non-smooth case could be
    defined as the inverse limit of the symplectic homology over exact
    embeddings of smooth domains into $\mathring{W}$ as in
    \cite{Us-SBM}. The resulting homology would be an invariant of
    exact symplectomorphisms of the interior. However, as we have
    pointed out, it would not, most likely, be a persistent module in
    the sense of Section \ref{sec:persistence-def}, but rather an
    element of a certain completion of the space of persistence
    modules; cf. \cite{AI,Hu,LRSV,Us-SBM}. It is not clear what its
    spectrum could be and how to define its barcode. (This approach
    might or might not be equivalent to the one described in Remark
    \ref{rmk:limit-barcode}.)  Here we have preferred to get around
    this difficulty by using \eqref{eq:fb-C0}.
\end{Remark}    

By Proposition \ref{prop:Inv1}, $\fb_\eps(W,s)=\fb_\eps(W',s)$ when
$W$ and $W'$ are smooth and their interiors are exact
symplectomorphic.  We do not know if, as stated, this is also true for
non-smooth star-shaped domains, but we have the following invariance
result which shows that the domains have the same $\fb_\eps(s)$ when
one of the domains is smooth and that in general these functions have
the same growth rate with an adjustment for $\eps$. This is sufficient
for our purposes.

\begin{Theorem}
  \label{thm:inv-nonsmooth}
Let $W$ and $W'$ be closed, not necessarily smooth, star-shaped
domains. Assume that their interiors $\mathring{W}$ and
$\mathring{W'}$ are (exact) symplectomorphic. Then
\begin{itemize}

\item $\fb_\eps(W,s)=\fb_\eps(W',s)$ for all $\eps>0$ and all $s$ when
  one of the domains is smooth, and
  \item for any $0<\delta<\eps$ and all $s>\delta$ we have
    $$
    \fb_{\eps+\delta}(W,s-\delta)\leq \fb_{\eps}(W',s)\textrm{ and }
    \fb_{\eps+\delta}(W',s-\delta)\leq \fb_{\eps}(W,s).
    $$
  \end{itemize}
\end{Theorem}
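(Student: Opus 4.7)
The plan is to use the exact symplectomorphism $\Phi\colon\mathring W\to \mathring{W'}$ (exactness is automatic since star-shaped domains in $\R^{2n}$ are contractible) to transport smooth star-shaped approximations of $W'$ inside $\mathring{W'}$ to smooth compact Liouville subdomains of $\mathring W$, identify their filtered symplectic homologies via Proposition \ref{prop:Inv1}, and then sandwich these transported (and generally non star-shaped in $\R^{2n}$) subdomains between rescaled copies of $W$ and smooth star-shaped approximations of $W$ inside $\mathring W$. The sandwich lets us control the barcode growth through Viterbo transfer together with the Banach--Mazur interleaving bound \eqref{eq:int-SBM} and the monotonicity bounds \eqref{eq:fb-mon1}--\eqref{eq:fb-mon2}.

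Concretely, take smooth star-shaped $U'_k\subset\mathring{W'}$ with $\dSBM(U'_k,W')\to 0$ realizing the liminf in \eqref{eq:fb-C0}, so that $\fb_\eps(U'_k,s)\to\fb_\eps(W',s)$. Set $V_k:=\Phi^{-1}(U'_k)\subset\mathring W$. Each $V_k$ is a smooth compact Liouville domain whose interior is exact symplectomorphic to $\mathring{U'_k}$ via $\Phi$; by Proposition \ref{prop:Inv1} we have $\SH(V_k)\cong \SH(U'_k)$ as persistence modules, hence $\fb_\eps(V_k,s)=\fb_\eps(U'_k,s)$. Since $\{U'_k\}$ exhausts $\mathring{W'}$, the family $\{V_k\}$ exhausts $\mathring W$. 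Pick $\nu_k\to 1^+$ so that $\nu_k^{-1}W\subset V_k$ eventually (possible by compactness of $\nu_k^{-1}W$ in $\mathring W$), and a smooth star-shaped $\tilde W_k\subset\mathring W$ with $V_k\subset \mathring{\tilde W_k}$ and $\dSBM(\tilde W_k,W)\to 0$. The nested chain $\nu_k^{-1}W\subset V_k\subset \tilde W_k$ produces Viterbo transfer maps $\SH(\tilde W_k)\to\SH(V_k)\to\SH(\nu_k^{-1}W)$ whose composition is the Viterbo transfer for the nested star-shaped pair $\nu_k^{-1}W\subset \tilde W_k$. For Part 1, with $W$ smooth, \eqref{eq:int-SBM} gives $\SH(\tilde W_k),\SH(\nu_k^{-1}W)\to\SH(W)$ in interleaving distance (after the standard action rescaling for $\nu_k$); the factorization through $\SH(V_k)$ then squeezes $\SH(V_k)\to \SH(W)$, yielding $\fb_\eps(W,s)=\lim\fb_\eps(V_k,s)=\lim\fb_\eps(U'_k,s)=\fb_\eps(W',s)$. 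For Part 2, $W$ need not be smooth, so \eqref{eq:int-SBM} is replaced by \eqref{eq:fb-mon1}--\eqref{eq:fb-mon2}, and the sandwich produces precisely the $\delta$-shifted inequality; the reverse inequality follows by swapping the roles of $W$ and $W'$.

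The main obstacle is the squeeze step: one must show that the Viterbo transfer along $\nu_k^{-1}W\subset V_k\subset \tilde W_k$ is compatible, up to controlled action shift, with the embedding-based interleaving between $\SH(\tilde W_k)$ and $\SH(\nu_k^{-1}W)$ coming from \eqref{eq:int-SBM}, so that $\SH(V_k)$ is forced to interleave with $\SH(W)$ at vanishing distance. This compatibility rests on naturality of filtered symplectic homology under Liouville inclusions and on the standard identification, for nested star-shaped domains with small $\dSBM$-distance, of the Viterbo transfer with the continuation-based interleaving maps used to prove \eqref{eq:int-SBM}.
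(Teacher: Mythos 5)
Your sandwich $\nu_k^{-1}W\subset V_k\subset \tilde W_k$ only produces morphisms in one direction, and that is exactly where the argument breaks. Knowing that $\SH(\tilde W_k)\to \SH(V_k)\to \SH(\nu_k^{-1}W)$ are filtered maps whose composition is compatible with the interleaving between the two \emph{outer} modules does not force the \emph{middle} module to be interleaving-close to $\SH(W)$, and in particular gives no upper bound on $\fb_\eps(V_k,s)$: if $A\xrightarrow{f}B\xrightarrow{g}C$ factors the canonical comparison map, $B$ may still carry arbitrarily many long bars invisible to both maps (take $B=A\oplus B'$, $f$ the inclusion, $g$ equal to the given map on $A$ and zero on $B'$). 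So the ``squeeze'' is not a consequence of compatibility of Viterbo transfer with the interleaving maps; it genuinely requires a morphism going \emph{back into} $\SH(V_k)$, i.e.\ a symplectic embedding of (a slightly shrunk copy of) $\tilde W_k$, or of $W$ itself, into $\mathring{V_k}$ together with an unknottedness isotopy. That reverse embedding is the one thing your construction never provides, and without it Part 2 also fails: to apply \eqref{eq:fb-mon1}--\eqref{eq:fb-mon2} you need $\dSBM(U_k,U'_k)$ (or $\dSBM(\tilde W_k,U'_k)$) to be small, which one-sided inclusions inside $\mathring{W}$ do not give. A secondary problem: a liminf-realizing sequence $U'_k$ in \eqref{eq:fb-C0} need not lie inside $\mathring{W'}$, let alone exhaust it, so $V_k=\Phi^{-1}(U'_k)$ and the eventual inclusion $\nu_k^{-1}W\subset V_k$ are not available as stated.

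The paper supplies precisely the missing ingredient and nothing else: Lemma \ref{lemma:dist} shows $\dSBM(W,W')=0$ directly from the symplectomorphism $F$, taking $\varphi=F|_{\lambda^{-1}W}$ and $\psi=\iota\circ\xi\circ F^{-1}\circ\xi^{-1}$ for $\xi>1$ close to $1$, with unknottedness checked by the explicit scaling isotopy $t\mapsto t\big(F^{-1}(tF(x))\big)$, $t\in[1,\xi]$. Once $\dSBM(W,W')=0$ is known, the first bullet is \eqref{eq:dSBM-fb} combined with the agreement of \eqref{eq:fb-C0} with the honest barcode count for the smooth domain, and the second bullet follows by choosing smooth $U_k$, $U'_k$ realizing the two liminfs, noting $\dSBM(U_k,U'_k)\to 0$ by the triangle inequality, and applying \eqref{eq:fb-mon1}--\eqref{eq:fb-mon2}; no transfer maps, middle-module squeezes, exhaustions, or even Proposition \ref{prop:Inv1} are needed. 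If you want to salvage your route, use $\Phi$ and a rescaling to embed $W'$ (or $U'_k$) back into a slightly enlarged $V_k$ with the scaling isotopy giving unknottedness --- but at that point you have reproved Lemma \ref{lemma:dist}, and the rest of your machinery becomes unnecessary.
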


\begin{Corollary}
  \label{cor:inv-nonsmooth}
  In the setting of Theorem \ref{thm:inv-nonsmooth}, $W$ and $W'$ have
  the same barcode entropy and slow barcode entropy.
\end{Corollary}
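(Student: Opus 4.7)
The plan is to deduce the corollary directly from the two-sided comparison in the second bullet of Theorem \ref{thm:inv-nonsmooth}, by passing to a limit in which the $\delta$-shift in both $\eps$ and $s$ is washed out. This is exactly the maneuver that underlies the very definition of (slow) barcode entropy, so no extra ingredients should be required.

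Concretely, I would fix $\eps>0$, set $\delta=\eps/2$, and start from the inequality
\[
\fb_{\eps+\delta}(W,s-\delta)\leq \fb_{\eps}(W',s),\quad s>\delta,
\]
supplied by Theorem \ref{thm:inv-nonsmooth}. Applying $\log^+$, dividing by $s$, and substituting $s'=s-\delta$ (so that $s'/(s'+\delta)\to 1$ as $s'\to\infty$) would give
\[
\hbar_{3\eps/2}(W)\;\leq\;\hbar_{\eps}(W').
\]
Letting $\eps\to 0+$ and using that $\hbar_{\eps'}(W)$ is monotone in $\eps'$ (so both $\hbar_{3\eps/2}(W)\to \hbar(W)$ and $\hbar_\eps(W')\to \hbar(W')$) would yield $\hbar(W)\leq\hbar(W')$; the reverse inequality is obtained from the symmetric inequality in the same bullet, giving $\hbar(W)=\hbar(W')$.

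For the slow barcode entropy the plan is identical, but with $s$ replaced by $\log s$ in the denominator; the shift $s\mapsto s-\delta$ is now absorbed via $\log(s'+\delta)/\log s'\to 1$ instead of $s'/(s'+\delta)\to 1$. The analogous inequality $\limsup_{s\to\infty}\log^+\fb_{3\eps/2}(W,s)/\log s\leq\limsup_{s\to\infty}\log^+\fb_{\eps}(W',s)/\log s$ and the passage $\eps\to 0+$, combined with symmetry, then deliver $\hbar^{\scriptstyle{Slow}}(W)=\hbar^{\scriptstyle{Slow}}(W')$. I do not anticipate any genuine obstacle: all the substance is packed into the quantitative estimate in Theorem \ref{thm:inv-nonsmooth}, and the remainder is an elementary double-limit bookkeeping.
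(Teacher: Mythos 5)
Your argument is correct and coincides with the paper's (implicit) proof: the corollary is stated as an immediate consequence of the second bullet of Theorem \ref{thm:inv-nonsmooth}, and your passage to the limits in $s$ and $\eps$, where the $\delta$-shifts in both variables are washed out by $s'/(s'+\delta)\to 1$ (resp.\ $\log(s'+\delta)/\log s'\to 1$), is exactly the intended bookkeeping. The only point worth flagging is that for non-smooth domains the paper does not claim monotonicity of $\fb_\eps(W,s)$ in $\eps$, only the near-monotonicity $\sup_{s'<s}\fb_{\eps_0}(W,s')\leq \fb_{\eps_1}(W,s)$ for $\eps_1<\eps_0$ from Section \ref{sec:C0}; this still makes $\eps\mapsto\hbar_\eps(W)$ monotone, and in any case $\hbar(W)$ and $\hbar^{\scriptstyle{Slow}}(W)$ are defined as $\eps\to 0+$ limits, so $\hbar_{3\eps/2}(W)\to\hbar(W)$ holds regardless and your conclusion stands.
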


The key to the proof is the following observation.
  \begin{Lemma}
    \label{lemma:dist}
    As in Theorem \ref{thm:inv-nonsmooth}, let $W$ and $W'$ be closed,
    not necessarily smooth, star-shaped domains such that the
    interiors $\mathring{W}$ and $\mathring{W'}$ are (exact)
    symplectomorphic. Then 
    $\dSBM(W, W')= 0$.
  \end{Lemma}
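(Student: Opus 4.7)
The plan is to exhibit, for every $\lambda>1$, a pair of symplectic embeddings witnessing that $\lambda$ is $(W,W')$-admissible. Since $W$ and $W'$ are both contractible, any symplectic embedding is automatically exact, so the only substantive requirement beyond the embeddings themselves is the unknottedness condition.

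Let $\Phi\colon \mathring{W}\to\mathring{W'}$ be a symplectomorphism. Fix $\lambda>1$. Because $W$ is star-shaped, $\lambda^{-1}W\subset\mathring{W}$ is compact, and on some open neighborhood of $\lambda^{-1}W$ inside $\mathring{W}$ we may set $\varphi:=\Phi$, giving a symplectic embedding $\varphi\colon \lambda^{-1}W\to\mathring{W'}$. For the reverse direction, define
\[
\psi(z):=\lambda\,\Phi^{-1}\!\bigl(\lambda^{-1}z\bigr),\qquad z\in W'.
\]
Since $W'$ is star-shaped, $\lambda^{-1}W'\subset\mathring{W'}$, so $\Phi^{-1}(\lambda^{-1}z)\in\mathring{W}$ and hence $\psi(z)\in\lambda\mathring{W}$. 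The rescaling $s_\mu\colon z\mapsto\mu z$ on $\R^{2n}$ satisfies $s_\mu^{*}\omega=\mu^{2}\omega$, so
\[
\psi^{*}\omega \;=\; s_{\lambda^{-1}}^{*}(\Phi^{-1})^{*}s_\lambda^{*}\omega
\;=\; \lambda^{-2}\cdot \lambda^{2}\,\omega \;=\;\omega,
\]
and $\psi$ is a symplectic embedding.

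To verify unknottedness, consider the one-parameter family
\[
F_\mu(z)\;:=\;\mu\,\Phi^{-1}\!\bigl(\mu^{-1}\Phi(z)\bigr),\qquad z\in \lambda^{-1}W,\ \mu\in[1,\lambda].
\]
For $z\in\lambda^{-1}W$ we have $\Phi(z)\in\Phi(\lambda^{-1}W)\subset\mathring{W'}$, and since $\mu\ge 1$, the rescaled point $\mu^{-1}\Phi(z)$ lies in $\mathring{W'}$ as well, so $\Phi^{-1}(\mu^{-1}\Phi(z))\in\mathring{W}$ and $F_\mu(z)\in\mu\mathring{W}\subset\lambda\mathring{W}$. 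The same computation as above shows that each $F_\mu$ is a symplectic embedding into $\lambda\mathring{W}$. Moreover, $F_1$ is the natural inclusion $\lambda^{-1}W\hookrightarrow\lambda\mathring{W}$ and $F_\lambda=\psi\circ\varphi$. Hence $\psi\circ\varphi$ is isotopic to the inclusion through symplectic embeddings, establishing unknottedness.

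Therefore every $\lambda>1$ is $(W,W')$-admissible, so $\delta(W,W')=0$. The argument is symmetric in $W$ and $W'$ (replace $\Phi$ by $\Phi^{-1}$), giving $\delta(W',W)=0$, and consequently $\dSBM(W,W')=0$. The only point that requires any care is the interplay between the conformal scaling $z\mapsto\lambda z$ and the symplectic form; this is what forces the specific conjugated form of $\psi$ and $F_\mu$, and it is also what makes the isotopy land inside $\lambda\mathring{W}$ for every intermediate~$\mu$.
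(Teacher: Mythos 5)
Your proof is correct and takes essentially the same approach as the paper: the key idea in both is the conjugated isotopy $\mu\mapsto \mu\,\Phi^{-1}\circ\mu^{-1}\circ\Phi$ between the inclusion and $\psi\varphi$, with exactness automatic by contractibility. The only cosmetic difference is that the paper introduces an auxiliary scaling parameter $\xi\in(1,\lambda)$ close to $1$ and isotopes over $\xi$, whereas you use the scale factor $\lambda$ directly throughout, which is if anything slightly cleaner.
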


  We recall in this connection that $\dSBM$ is only a pseudo-metric,
  and hence a sequence $\dSBM$-converging can have more than one
  limit. The proof of the lemma, given later in this section, is
  simple but not entirely automatic due to the fact that the
  definition of the distance involves the entire domains but not only
  their interiors.

  Applying the triangle inequality and using symmetry in $W$ and $W'$
  we obtain the following.

  \begin{Corollary}
    \label{cor:dist}
  Let $W$ and $W'$ be as in the lemma and let $U$ be another
  star-shaped domain. Then $\dSBM(U,W) =\dSBM(U,W')$.
\end{Corollary}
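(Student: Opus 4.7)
The plan is to deduce the corollary as an immediate consequence of Lemma \ref{lemma:dist} combined with the fact, stated in Section \ref{sec:SBM}, that $\dSBM$ is a pseudo-distance on the set of closed bounded star-shaped domains. In particular, the triangle inequality is available. So the only input beyond Lemma \ref{lemma:dist} is formal: two applications of the triangle inequality and one interchange of the roles of $W$ and $W'$.

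More concretely, I would first record that, by Lemma \ref{lemma:dist} applied to the pair $(W,W')$, we have $\dSBM(W,W')=0$, and by symmetry of $\dSBM$ also $\dSBM(W',W)=0$. Then from the triangle inequality
\[
\dSBM(U,W)\leq \dSBM(U,W')+\dSBM(W',W)=\dSBM(U,W')
\]
and, swapping $W$ and $W'$ (both domains satisfy the hypotheses of Lemma \ref{lemma:dist} symmetrically),
\[
\dSBM(U,W')\leq \dSBM(U,W)+\dSBM(W,W')=\dSBM(U,W).
\]
Combining these two inequalities yields $\dSBM(U,W)=\dSBM(U,W')$, which is the claim.

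There is no real obstacle here, because the substantive content is entirely packaged in Lemma \ref{lemma:dist}; once that lemma is in hand, the corollary is a pure formal consequence of the triangle inequality. The only minor point worth flagging is that $\dSBM$ is only a pseudo-distance (as emphasized in the paragraph preceding the lemma), so one should be careful not to invoke separation or uniqueness of limits. Fortunately, the two-line argument above only uses the triangle inequality and symmetry, both of which hold for pseudo-distances, so nothing further is needed.
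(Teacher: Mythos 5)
Your proposal is correct and coincides with the paper's argument: the corollary is stated there as following directly from Lemma \ref{lemma:dist} via the triangle inequality and the symmetry of the roles of $W$ and $W'$, which is exactly the two-inequality argument you give. Your remark that only the pseudo-distance axioms (triangle inequality and symmetry) are needed, not separation, is also the right point to flag.
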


\begin{proof}[Proof of Theorem \ref{thm:inv-nonsmooth}]
  The first assertion is a consequence of \eqref{eq:dSBM-fb} and Lemma
  \ref{lemma:dist}.  To prove the second one, pick two sequences $U_k$
  and $U'_k$ of domains with smooth boundary such that
  $\dSBM(U_k, W)\to 0$ and $\fb_\eps(W,s)=\lim \fb_\eps(U_k,s)$ as
  $k\to\infty$, and similarly $\dSBM(U'_k, W')\to 0$ and
  $\fb_\eps(W',s)=\lim \fb_\eps(U'_k,s)$. Then $\dSBM(U_k,U'_k)\to 0$
  by the triangle inequality. Now the assertion readily follows from
  \eqref{eq:fb-mon1} and \eqref{eq:fb-mon2}. Indeed, for instance,
$$
\fb_{\eps+\delta}(W,s-\delta)\leq \liminf_{k\to \infty} \fb_{\eps+\delta}(U_k,s-\delta)
\leq \lim_{k\to\infty} \fb_{\eps}(U'_k,s)=\fb_\eps(W',s).
$$
\end{proof}

To complete the proof of the theorem it remains to prove the lemma.

\begin{proof}[Proof of Lemma \ref{lemma:dist}] It suffices to show
  that $\delta(W,W')=0$. Then $\delta(W',W)=0$, for $W$ and $W'$ play
  symmetric roles.

  Denote by $F$ an exact symplectomorphism $\mathring{W}\to
  \mathring{W}'$. Pick any $\lambda>1$. The exact symplectic embedding
  $\varphi\colon \lambda^{-1}W\to \mathring{W}'$ is simply the
  restriction of $F$ to $\lambda^{-1}W$. To define $\psi\colon W'\to
  \lambda W$, pick $1<\xi<\lambda$ so close to 1 that $F(\lambda^{-1}W)\subset
  \xi^{-1}\mathring{W}'$. Then $\psi$ is the composition
  $$
  \psi\colon
  W'
  \stackrel{\xi^{-1}}{\longrightarrow}
  \mathring{W}'
  \stackrel{F^{-1}}{\longrightarrow}
  \mathring{W}
  \stackrel{\xi}{\longrightarrow}
  \xi\mathring{W}
  \stackrel{\iota}{\longrightarrow}
  \lambda\mathring{W},
  $$
  where the last map $\iota$ is the inclusion. Then the composition
  $\psi\varphi$ is defined and
  $$
  \psi\varphi=\iota\circ (\xi F^{-1}\xi^{-1} F)\colon \lambda^{-1}W\to
  \lambda \mathring{W}.
  $$
  In contrast with $\psi$, this map is defined for all $\xi$ in the
  range between 1 and the original value of $\xi$, with 1
  included. Thus varying $\xi$ in this range we obtain the desired
  homotopy between $\iota$ and $\psi\varphi$. In other words, as a
  function of $t\in [1,\xi]$ and $x\in\lambda^{-1} W$, the homotopy is
  $t\big(F^{-1}(tF(x))\big)$, which is clearly a smooth function of
  $(t,x)$ and an exact symplectic embedding. For $t=\xi$ this is
  $\psi\varphi$ and for $t=1$ this is the inclusion $\iota$.
\end{proof}

\section{Proofs of Theorems \ref{thm:toric-Reeb} and \ref {thm:toric-nonsmooth}}
\label{sec:main-prfs}

\subsection{Proof of Theorem \ref{thm:toric-Reeb}}
\label{sec:pf-toric-Reeb}
Our next goal is to prove Theorem \ref{thm:toric-Reeb}. As in Section
\ref{sec:pf-toric-sympl}, we will prove a stronger result, giving an
upper bound on the number of generators of the symplectic homology
complex and hence on $\fb_\eps(s)$.  Let, as in Section
\ref{sec:results-Reeb}, $\Omega$ be a star-shaped domain with a smooth
boundary, which we require to be strictly star-shaped, i.e., the
radial direction is nowhere tangent to $\p\Omega$ including the
boundary of the positive quadrant.

\begin{Theorem}
  \label{thm:toric-Reeb2}
  Assume that $\p\Omega$ is smooth and convex or concave, or real
  analytic. Then, for a suitable choice of auxiliary structures, the
  number of generators of the symplectic homology complex in the sense
  of Section \ref{sec:SH-def} with action less than or equal to $s$
  is bounded from above by $ C_n(\Omega) s^n + C_0(\Omega)$, where
  the constants $C_n(\Omega)$ and $C_0(\Omega)$ are independent of the
  auxiliary structures and $s$.
\end{Theorem}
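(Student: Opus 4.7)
The plan is to adapt the proof of Theorem~\ref{thm:toric-sympl2} to the symplectic homology setting. After fixing auxiliary structures consisting of a semi-admissible Hamiltonian $H=h(r)$ of slope $s$, a generic $C^\infty$-small perturbation of the contact form $\alpha$ on $\p W_f$, a $C^\infty$-small Morse--Bott perturbation of $H$, and a regular almost complex structure, the generators of the resulting Floer complex computing $\SH^s(W_f)$ split into two groups: a number bounded uniformly in $s$ and to be absorbed into $C_0(\Omega)$, coming from constant orbits in $\{r\le 1\}\subset W_f$; and $1$-periodic orbits in the shell $1<r<\rmax$ that are, by \eqref{eq:level}, in bijection with closed Reeb orbits on $\p W_f$ of period $T<s$. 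So the task reduces to bounding the number of closed Reeb orbits of period $\le s$ on $\p W_f$, counted with Morse--Bott multiplicity after perturbation.

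I would exploit the toric structure as follows. For each open face $\Delta$ of $\bar\Delta_+$ of dimension $d\in\{0,\ldots,n-1\}$, the stratum of $\p W_f$ above $\Delta$ is foliated by $(d{+}1)$-dimensional torus fibers $\mu^{-1}(f(\theta)\theta)$, on each of which the Reeb vector field is a constant-coefficient combination $R_\alpha=\sum_i a_i(\theta)\,\p_{\theta_i}$ with $a(\theta)$ parallel to the outward normal $n_\Delta(\theta)$ to $\p\Omega_f$ at $f(\theta)\theta$ (inside the relevant coordinate subspace) and normalized by $\langle a(\theta),f(\theta)\theta\rangle=\mathrm{const}$. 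Such a fiber carries a closed Reeb orbit of period $T$ exactly when $T a(\theta)\in 2\pi\Z^{d+1}$. Introducing the Reeb-direction map
$$
F_\Delta(\theta):=\frac{n_\Delta(\theta)}{\langle n_\Delta(\theta),\, f(\theta)\theta\rangle},
$$
the closed-orbit tori above $\Delta$ of period at most $s$ are in bijection with lattice points in $\bigcup_{0<T\le s} T\cdot F_\Delta(\Delta)\subset\R^{d+1}$. In the convex case, after a preliminary $C^0$-small replacement of $f$ by a strictly convex $\tilde f$ (which perturbs $\fb_\eps$ negligibly by the Banach--Mazur continuity of Section~\ref{sec:SBM}), the map $F_\Delta$ becomes injective and its image lies in the relative boundary of the polar dual of the strictly convex body cut out by $\tilde f$ in the ambient coordinate subspace. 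The above union is then contained in $s$ times that polar dual, and hence contains $O(s^{d+1})$ lattice points, with the implicit constant controlled by a fixed open neighborhood $U_\Delta$ of the closure of $F_\Delta(\Delta)$ exactly as in the proof of Theorem~\ref{thm:toric-sympl2}, and therefore independent of $\tilde f$. Each $(d{+}1)$-dimensional Morse--Bott torus of orbits contributes at most $2^{d+1}\le 2^n$ generators after the Floer-theoretic perturbation; summing over all faces, the top-dimensional face $d=n-1$ dominates and yields the desired $C_n(\Omega)s^n+C_0(\Omega)$ bound. The concave case is identical, using the polar dual of the convex complement.

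For the real analytic case I would imitate the parametric perturbation from the end of the proof of Theorem~\ref{thm:toric-sympl2}: replacing $f$ by $f+\langle\lambda,\theta\rangle$ for $\lambda$ in a small ball in $\R^n$, I obtain real analytic maps $F_{\Delta,\lambda}$; by a Baire-category and measure argument, for generic $\lambda$ no rational vector is a critical value of any $F_{\Delta,\lambda}$, so the closed-orbit tori remain Morse--Bott non-degenerate. Moreover, by the standard upper bound on the number of preimages of a regular value of a real analytic map on a fixed compact set, there is a constant $N$ depending only on $f$ bounding the number of preimages of any regular value for any $\Delta$; combined with the lattice-point count of $O(s^{d+1})$ for each face, this again gives the $O(s^n)$ bound. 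The main obstacle I anticipate is to couple these geometric perturbations of $f$ with the Floer-theoretic auxiliary structures so that the upper bound on the number of generators for the \emph{original} $\alpha$ holds with constants independent of both $s$ and of the perturbation. For this I would use Proposition~\ref{prop:sympl-Fl} to access $\SH^s(W_f)$ at chain level through a semi-admissible Hamiltonian, combined with the continuity of $\SH$ under Banach--Mazur approximations from Section~\ref{sec:SBM}; reproducing the fixed-neighborhood $U_\Delta$ device from Theorem~\ref{thm:toric-sympl2} is the key ingredient that prevents the constants from blowing up under the perturbation, and this step is where the contact argument is genuinely more technical than its closed-manifold counterpart.
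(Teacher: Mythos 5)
Your convex/concave argument is essentially the paper's: reduce via a semi-admissible Hamiltonian of slope $s$ (Proposition \ref{prop:sympl-Fl}) to counting, face by face, the rational Reeb-invariant tori of period at most $s$ after a strictly convex $C^\infty$-small perturbation $\tf$, bound them by a lattice-point count of order $s^{\dim V_\Delta}$, and split each Morse--Bott torus into at most $2^n$ nondegenerate generators. Your ``polar dual'' device is literally the paper's quantity in disguise: with the unit normal, $\|F_\Delta(\theta)\|=1/T(\theta)$ where $T(\theta)=\tf(\theta)\left<\theta,G_\Delta(\theta)\right>$ is the primitive period, so containment of the swept region in $s$ times the polar dual is the same as the paper's count of lattice points in the ball of radius $s/m_\Delta$ with $m_\Delta=\inf_\Delta T>0$ as in \eqref{eq:m-f}. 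One remark: the ``main obstacle'' you anticipate largely dissolves, because the $C^\infty$-small replacement of $f$ by $\tf$ is itself a perturbation of the contact form and hence part of the allowed auxiliary structures of Section \ref{sec:SH-def}; no Banach--Mazur continuity is needed for this theorem (that enters only in the non-smooth Theorem \ref{thm:toric-nonsmooth}). What does need saying, and what the paper says, is that the constants (e.g.\ $m$) depend $C^1$-continuously on $\tf$, so they can be chosen determined by $\Omega$ alone; your fixed-neighborhood $U_\Delta$ device accomplishes the same thing.

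In the real analytic case your route genuinely differs from the paper's, and it is the one place where more justification is required. You perturb by $f\mapsto f+\left<\lambda,\theta\right>$, imitating Theorem \ref{thm:toric-sympl2}; but unlike the closed-manifold case, where $G_{\Delta,\lambda}=G_\Delta+\lambda_\Delta$ so critical values are simply translated, here the relevant map is the (normalized) Gauss map into $S^{d-1}_\Delta$, and a linear perturbation of $f$ does not act on its critical values by any simple group action. So ``by a Baire-category and measure argument'' needs to be an actual parametric transversality argument: one must check that for fixed $\theta$ the map $\lambda\mapsto G_{\Delta,\lambda}(\theta)$ is a submersion onto the sphere (it is, since varying the tangential part of $P_\Delta\lambda$ moves the normal direction freely), and then apply Sard to the family, face by face, over the countably many rational directions. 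This can be made to work, and real analyticity of the family in $(\lambda,\theta)$ still yields the uniform preimage bound $N$ via the local-degree principle; the paper instead sidesteps the transversality computation by rotating, $f_\lambda=f\circ\lambda^{-1}$ with $\lambda$ near $I\in\SO(n)$, applied successively within each $V_{\Delta_i}$, under which the Gauss map of each coordinate slice transforms equivariantly, so genericity is immediate (and openness of the good set follows from compactness of $\bar\Delta$). Two minor imprecisions to fix in your write-up: ``rational vector is a critical value of $F_{\Delta,\lambda}$'' is not quite meaningful since $F_\Delta$ maps a $d$-dimensional face into a $(d+1)$-dimensional space, so nondegeneracy should be phrased via the Gauss map to $S^{d}_\Delta$ (or the map onto the boundary hypersurface of the polar dual); and one should note that boundedness of the image of $F_\Delta$, uniformly in the perturbation, follows from strict star-shapedness, i.e.\ exactly the positivity of $m_\Delta$.
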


The proof of the theorem follows the same path as the proof of Theorem
\ref{thm:toric-sympl2}. Moreover, the argument for convex/concave
domains is implicitly contained in \cite[Sect.\ 2.2 and 2.3]{GH}, and
we omit some of the calculations in that case.

\begin{proof}
  As in the proof of Theorem \ref{thm:toric-sympl2}, we will first
  consider the convex case; the proof for a concave domain $\Omega$ is
  similar. We will view the domain $\Omega=\Omega_f$ as defined by
  \eqref{eq:Omega}. Let $\tOmega:=\Omega_{\tf}$ be a strictly convex,
  smooth $C^\infty$-small perturbation of $\Omega$ with $f$ replaced
  by a smooth function $\tf$ $C^\infty$-close to $f$.

  Denote by $\Delta\subset S^{n-1}$ the intersection of the open
  positive quadrant in a coordinate subspace $V_\Delta$ of any
  dimension $d$ with $S^{n-1}$.  In other words, $\Delta$ is an open
  face of $\bar{\Delta}_+$.  The intersections of $\p \Omega$ and
  $\p \tOmega$ with $V_\Delta$ are parametrized by $f$ and $\tf$ or,
  to be more precise, by the maps
  $\Psi\colon \theta\mapsto (f(\theta),\theta)$ and
  $\tPsi\colon \theta\mapsto (\tf(\theta),\theta)$. (Here we are using
  polar coordinates as in Section \ref{sec:toric}.) Set
  $G_\Delta\colon \bar{\Delta} \to S^{d-1}_\Delta\subset V_\Delta$ to
  be the Gauss map for $\tOmega\cap V_\Delta$, using the outer
  normal, i.e., such that $\theta$ and $G_\Delta(\theta)$ form an acute
  angle. (When the role of $\Delta$ is not essential we will write
  $S^{d-1}$ for the unit sphere $S^{d-1}_\Delta$ in $V_\Delta$.)

  As  a simple calculation shows 
  \begin{equation}
    \label{eq:G}
  G_\Delta=\frac{\p/\p r-P_\Delta \nabla \tf/\tf}{\sqrt{1+\|P_\Delta\nabla \tf\|^2/\tf ^2}},
\end{equation}
where $P_\Delta$ is the orthogonal projection to $V_\Delta$, i.e.,
$P_\Delta\nabla \tf$ is the gradient of the restriction of $\tf$ to
$S^{d-1}_\Delta$ in $V_\Delta$. Furthermore and more importantly, in
\eqref{eq:G} we treat $\nabla \tf$ as the gradient of $\tf$ as a
function on the unit sphere, but not on $\R^n$. Thus, in this equation
$\nabla \tf(\theta)$ is a vector in $T_\theta S^{n-1}$ parallel
transported to the point $(\tf(\theta),\theta)\in V_\Delta$. If we
thought of $\tf$ as a function on $\R^n$ (with the origin deleted) the
gradient at $(r,\theta)$ would be $\nabla \tf/r$. This amounts to the
factor of $1/\tf$ in \eqref{eq:G}.

  The map $G_\Delta$ is a smooth embedding due to the strict convexity
  requirement. In particular, $G_\Delta$ is one-to-one, and
  $G_\Delta(\Delta)$ is an open subset of $S^{d-1}$.

  Note that here again $V_\Delta$ carries a canonical lattice $\Z^d$
  and a canonical measure as the dual of the Lie algebra of $\T^d$.
  Let us call a point $v\in S^{d-1}$ rational if $v$ is the
  intersection of $S^{d-1}$ with a line generated by an integer
  vector.

  The inverse image $L_\theta:=\mu^{-1}(\tPsi(\theta))$, where
  $\theta\in\Delta$, is a torus of dimension $d$ invariant under the
  Reeb flow and also an orbit of the $\T^n$-action. It is easy to see
  that this torus is comprised entirely of periodic orbits if and only
  if $v=G_\Delta(\theta)$ is a rational point in $S^{d-1}$. We will
  call such tori rational. (Otherwise, no point in $L_\theta$ is
  periodic.)  A direct calculation shows that the Reeb action (i.e.,
  period) of an uniterated closed orbit in a rational torus $L_\theta$
  is
  $$
  T(\theta)=\tf(\theta)\left<\theta, G_\Delta(\theta)\right>;
  $$
  cf., \cite[p.\ 3555]{GH}. Explicitly, with \eqref{eq:G} in mind,
  \begin{equation}
    \label{eq:T}
  T(\theta)=\frac{\tf(\theta)}{\sqrt{1+\|P_\Delta\nabla \tf(\theta)\|^2/\tf(\theta) ^2}}.
\end{equation}

Let now $H$ be a semi-admissible Hamiltonian for $\tf$ with slope
$s$. Without loss of generality we may assume that $s$ is not in the
action spectrum of $\mu^{-1}(\p \tOmega)$. A rational point
$v\in G_\Delta(\Delta)$ gives rise to several invariant tori filled up
by one-periodic orbits of $H$ and projecting to $L_\theta$, where
$G_\Delta(\theta)=v$. The number of these tori is
$\lfloor s/T(\theta)\rfloor$.

  Set
  $$
  m_\Delta=\inf \big\{T(\theta)\,\mid \theta\in
  \Delta\big\}.
  $$
  Here the infimum is taken over all $\theta\in \Delta$ but not only
  such that $G_\Delta(\theta)$ is rational. By \eqref{eq:T},
  $$
  m_\Delta=\inf_\Delta \frac{\tf}{\sqrt{1+\|P_\Delta\nabla \tf\|^2/\tf
      ^2}}.
  $$
  It is not hard to see that $m_\Delta>0$ as a consequence only of the
  assumption that $\tf$ is smooth, i.e., $\Omega$ is strictly
  star shaped. For instance, 
  \begin{equation}
    \label{eq:m-f}
  m_\Delta\geq \min_{\bar{\Delta}_+} \frac{\tf}{\sqrt{1+\|\nabla \tf\|^2/\tf
      ^2}}>0.
\end{equation}
Therefore, for each $\Delta$ the total number of such tori is bounded
from above by the number of integer points in the ball of radius
$s/m_\Delta$ in $V_\Delta$. The later number is in turn bounded from
above by $c(d)(s/m_\Delta)^d+c_\Delta$, where as $c(d)$ we can take
any constant greater than the volume of the unit ball and $c_\Delta$
is completely determined by $m_\Delta$ and $c(d)$. Setting
  \begin{equation}
    \label{eq:m}
  m:=\min_{\{\Delta\}} m_\Delta\geq m_{\Delta_+}>0,
\end{equation}
and adding up these upper bounds, we conclude that the number of
rational tori for all $\Delta$ is bounded from above by
  $$
  (2^n-1)C m^{-n} s^n+ C'',
  $$
  where $C=\max\{c(1),\ldots, c(n)\}$ is independent of $\tOmega$. The
  constant $C''$ is completely determined by the constants $c_\Delta$
  and $m_\Delta$ and $C$. We set $C'=(2^n-1)C m^{-n}$.

  Clearly $m_\Delta$ depends $C^1$-continuously on $\tf$, and hence so
  does $m$. As a consequence, the constants $c_\Delta$ and $C''$ can
  also be picked $C^1$-continuously in $\tf$, as is $C'$. Therefore,
  we can replace everywhere $\tOmega$ by $\Omega$ at the expense of
  slightly enlarging the constants $C'$ and $C''$ if necessary. The
  resulting new constants are completely determined by the domain
  $\Omega$.

  Finally, every periodic invariant torus of $H$ is Morse--Bott
  non-degenerate due to the strict convexity requirement for
  $\tOmega$; see again, e.g., \cite[p.\ 3555]{GH}. Hence, under a
  small non-degenerate perturbation of $H$, it would give rise to
  $2^{d}$ non-degenerate fixed points. The interior of the domain
  would in addition give rise to one fixed point. Hence, we obtain
  the upper bound $ C_n(\Omega) s^n + C_0(\Omega)$ on the number of
  generators.

  \begin{Remark}
    \label{rmk:m}
    It is easy to see from the above argument that the constants
    $C_n(\Omega)$ and $C_0(\omega)$ in the upper bound on the number
    of generators in Theorem \ref{thm:toric-Reeb2} can be taken to be
    completely determined by $m$, provided that $\Omega$ is convex or
    concave and $f$ is smooth. To be more precise, without trying to
    minimize these coefficients, we can set
    $C_n(\Omega)= 2^n(2^n-1)C m^{-n}$, where $C=\max c(d)$ is as above
    and $C_0(\Omega)$ is completely determined by $C$ and
    $m$. Moreover, we could replace here $m$ by any constant in the
    range $(0,m]$. These facts play a central role in the proof of
    Theorem~\ref{sec:pf-toric-nonsmooth}.
  \end{Remark}

  Next assume that $\p \Omega$ is real analytic, i.e., by definition,
  the function $f\colon \Delta_+\to (0,\infty) $ parametrizing
  $\p \Omega$ is real analytic and extends real analytically to an
  open neighborhood of $\bar{\Delta}_+$. We keep the notation $f$ for
  such an extension. As in the proof of Theorem
  \ref{sec:pf-toric-sympl}, the convexity of $\p \Omega$ was used
  above only to ensure that $G_\Delta$ is one-to-one, and hence $L_\theta$
  is a single torus, and that this map does not have rational critical
  values, and thus $L_\theta$ is Morse--Bott non-degenerate. Keeping the
  notation from the convex case, we see that it is enough to find an
  arbitrarily $C^2$-small perturbation $\tf$ of $f$ such that

    \begin{itemize}

   \item no rational point $v\in S^{d-1}_\Delta$ is a critical value of
  the Gauss map $\tG_\Delta$;  
  
   \item for any $\Delta$ and any regular value $v\in S^{d-1}_\Delta$ the
  number of inverse images $\tG^{-1}_\Delta(v)$ is bounded from
  above by some constant $N$ completely determined by $\Omega$.

  \end{itemize}

  Indeed, once these requirements are met we can simply set
  $C_n(\Omega)=N2^nC'$ and $C_0(\Omega)=N2^nC''$ where $C'$ and $C''$
  are as above.

  Let $B$ be a sufficiently small closed neighborhood of the unit
  $I\in \SO(n)$. By using the extension of $f$ let us extend $\Omega$ to
  an open (except for the origin) star shaped domain with real
  analytic boundary containing the original domain as a compact
  subset.  We keep the notation $\Omega$ for this extension.  Set
  $\Omega_\lambda:=\lambda (\Omega)\cap \R^{n}_{\geq 0}$ with
  $\lambda\in B$. In other words, $\p \Omega_\lambda$ is parametrized
  by
  $f_\lambda:=f\circ \lambda^{-1}\colon \bar{\Delta}_+\to (0,\infty)$.
  Then the resulting Gauss map
  $G_{\Delta,\lambda}\colon \bar{\Delta}\to S^{d-1}_\Delta$ is the
  unit normal to $\lambda(\p \Omega)\cap V_\Delta$ in $V_\Delta$.  In
  particular, $\Omega_I$ is the original domain, and $\p\Omega_\lambda$ and
  $G_{\Delta,\lambda}$ are $C^\infty$-small perturbations of
  $\p\Omega_I$ and, respectively, $G_\Delta$ when $\lambda$ is close to
  $I$. For every $\Delta$ the function $G_{\Delta,\lambda}(\theta)$ is real
  analytic in $\theta$ and $\lambda$. Therefore, the second
  requirement is satisfied automatically for any $\tf=f_\lambda$,
  i.e., the number of inverse images $\tG^{-1}_{\Delta,\lambda}(v)$ of
  a regular value is bounded from above by a constant $N$ independent
  of $v\in S^{d-1}$ and $\lambda\in B$, and hence completely
  determined by $\Omega$. (See the proof of the real analytic case of
  Theorem \ref{sec:pf-toric-sympl}.)

To complete the proof, we will show that the first requirement is
satisfied when $\tf=f_\lambda$ for $\lambda$ in a second Baire category
set.

Denote by $\Gamma_\Delta(R)$ the set of rational points in
$S^{d-1}_\Delta$ which are the intersections with the lines generated
by the integer lattice vectors in the ball of radius $R$ in
$V_\Delta$. This is a finite set of cardinality $O(R^d)$. Let $U_R$ be
the set of $\lambda\in B$ such that for every $\Delta$ all points in
$\Gamma_\Delta(R)$ are regular values of $G_{\Delta,\lambda}$.  The
set $U_R$ is open since the domain of $G_{\Delta,\lambda}$ is the
closure $\bar{\Delta}$. Hence, we only need to show that $U_R$ is
dense.

To this end let us order the faces $\Delta$ of the simplex
$\bar{\Delta}_+$ in an arbitrary way as
$\Delta_1,\ldots,\Delta_{\ell}$ with $\ell=2^{n-1}-1$. Pick
$\lambda_0\in B$. Let $\rho_1$ be a rotation of $V_{\Delta_1}$,
extended to $\R^n$ as the identity on $V_{\Delta_1}^\perp$, such that
for $\lambda_1=\rho_1\lambda_0$ all points in $\Gamma_{\Delta_1}(R)$
are regular values of $G_{\Delta_1,\lambda_1}$. Clearly, such $\rho_1$
can be taken arbitrarily close to the identity and hence $\lambda_1$
can be made arbitrarily close to $\lambda_0$. Furthermore, this is an
open condition on $\rho_1$ and therefore on $\lambda_1$: for any
$\lambda_2$ close to $\lambda_1$ all points in $\Gamma_{\Delta_1}(R)$
are regular values of $G_{\Delta_1,\lambda_2}$. Next, let $\rho_2$ be
a rotation of $V_{\Delta_2}$, again extended to $\R^n$ as the identity on
$V_{\Delta_2}^\perp$, such that for $\lambda_2=\rho_2\lambda_1$ all
points in $\Gamma_{\Delta_2}(R)$ are regular values of
$G_{\Delta_2,\lambda_2}$. Moreover, we take $\rho_2$ so small that all
points in $\Gamma_{\Delta_1}(R)$ are still regular values of
$G_{\Delta_1,\lambda_2}$.

Proceeding inductively, we obtain an arbitrarily small perturbation
$\lambda_\ell=\rho_\ell\cdots\rho_1\lambda_0$ of $\lambda_0$ such that
for all $\Delta_i$ every point in $\Gamma_{\Delta_i}(R)$ is a regular
value of $G_{\Delta_i,\lambda_\ell}$, i.e., $\lambda_\ell\in
U_R$. Hence, $U_R$ is dense, which completes the proof of the theorem.
\end{proof}

\subsection{Proof of Theorem \ref {thm:toric-nonsmooth}}
\label{sec:pf-toric-nonsmooth}
In the proof we focus first on convex domains and later
indicate the changes needed in the concave case.

The strategy of the proof is to approximate $f$ by smooth strictly
convex functions $\tf$ so that the constant $m$ given by \eqref{eq:m}
for $\tf$ is bounded away from 0 with a lower bound independent of
$\tf$. Then the theorem will follow from the definition of $\fb_\eps$
(see \eqref{eq:fb-C0} and \eqref{eq:fb-C01}) and Remark \ref{rmk:m}.

As in Section \ref{sec:toric}, let us extend $f$ to a small
neighborhood $U$ of $\bar{\Delta}_+$ in $S^{n-1}$ so that the set
$$  
\{(r,\theta)\mid r\leq f(\theta), \,\theta\in
U\}\subset \R^n
$$
given by \eqref{eq:convex-ext} is convex, where we keep the notation
$f$ for the extension. Throughout the proof we will shrink the
neighborhood $U$ several times.

Consider the function $F(x)=r/f(\theta)$, where we are using polar
coordinates $x=(r,\theta)$ on $\R^{n}$. This function is defined on
the cone spanned by $f(U)$, homogeneous of degree one and convex. As a
consequence, $F$ is uniformly Lipschitz on the cone spanned, perhaps,
by a smaller neighborhood of $\bar{\Delta}_+$, still denoted by $U$;
see \cite{Ro}. (This is the first instance when we need to shrink
$U$.)  Therefore, $f$ is also uniformly Lipschitz on $U$ after if
unnecessary shrinking $U$ again.  Denote by $\CC$ the convex set left
after cutting off the tip of the cone by an affine anti-diagonal
hyperplane, i.e., obtained by removing from this cone its intersection
with the half-space $\sum y_i\leq a $ for a sufficiently small
constant $a>0$. (Here $y_1,\ldots,y_n$ are the coordinates on $\R^n$.)

Let us approximate $F$ by smooth convex functions by applying the
following standard procedure; cf.\ \cite[Sec.\ 1]{Az}. Fix a
non-negative, compactly supported function $\delta$ on $\R^n$ with
integral equal to 1. Then the family
$\delta_\eta(x)=\eta^{-n}\delta(x/\eta)$, $\eta>0$, is an approximate
identity. When $\eta$ is small, the convolutions
$$
F_\eta=F*\delta_\eta
$$
are defined on the cut-off cone obtained from $\CC$ by slightly
shrinking $U$, which we still denote by $\CC$.  These functions are
smooth, convex, and globally Lipschitz with the same Lipschitz
constant as $F$. Moreover, $F_\eta$ are uniformly $C^0$-close to $F$
when $\eta$ is small.  In the notation from Section \ref{sec:toric},
set
$$
\Sigma=\{x\mid F(x)=1\}=:\p \Omega_f\textrm{ and }
\Sigma_\eta=\{x\mid F_\eta(x)=1\}\subset \CC.
$$
We claim that $1$ is a regular value of $F_\eta$ and, as a consequence,
$\Sigma_\eta$ is a smooth hypersurface. In fact, we will prove a
stronger result which is essential for our purposes: the derivative of
$F_\eta$ in the radial direction at the points of $\Sigma_\eta$ is
bounded from below by a constant $\xi>0$ which depends only on $f$. To
be more precise, we have the following.

\begin{Lemma}
  \label{lemma:dir}
There exists a constant $\xi>0$ depending only on $f$ such that
whenever $\eta>0$ is small,
$$
F_\eta(r_1,\theta)-F_\eta(r_0,\theta)\geq \xi(r_1-r_0)
$$
for every $\theta$ in $U$ and any $r_1>r_0$ sufficiently close to
$f(\theta)$ uniformly on $U$.
\end{Lemma}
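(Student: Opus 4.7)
The plan is to bound the one-sided radial directional derivative of $F$ from below at points close to $\Sigma$, and then transport this estimate through the mollification by convexity. Two ingredients supplied by the setup preceding the lemma do most of the work: (i) $F$ is positively homogeneous of degree one, so by Euler's identity $\langle p, z\rangle = F(z)$ for every $z$ in the cone and every subgradient $p \in \partial F(z)$; and (ii) $F$ is uniformly Lipschitz on the cone with some constant $L$ (from \cite{Ro}), so $|p|\leq L$ for every such $p$. Set $M := \max_{\bar{\Delta}_+} f$; both $L$ and $M$ depend only on $f$.

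First I would apply convexity of $F$ along the ray in direction $\theta$: for any $r_1 > r_0$ and any fixed $y$,
\begin{equation*}
F(r_1\theta - y) - F(r_0\theta - y)\;\geq\;(r_1-r_0)\,F'_+(r_0\theta - y;\,\theta)\;=\;(r_1-r_0)\sup_{p\in \partial F(r_0\theta - y)}\langle p,\theta\rangle.
\end{equation*}
Writing $z = r_0\theta - y = f(\theta)\theta + w$ with $|w| \leq |y| + |r_0 - f(\theta)|$, the Lipschitz bound gives $F(z) \geq F(f(\theta)\theta) - L|w| = 1 - L|w|$. For any $p \in \partial F(z)$, Euler's identity combined with $|p| \leq L$ yields
\begin{equation*}
\langle p, \theta\rangle \;=\; \frac{F(z) - \langle p, w\rangle}{f(\theta)} \;\geq\; \frac{F(z) - L|w|}{M} \;\geq\; \frac{1 - 2L|w|}{M}.
\end{equation*}

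Now fix an $\eta_0 > 0$ and a radial neighborhood size $\sigma > 0$, both depending only on $L$, small enough that $\eta_0 + \sigma < 1/(4L)$. Then for every $\eta \in (0,\eta_0]$, every $\theta \in U$, and every $r_0, r_1$ within distance $\sigma$ of $f(\theta)$, one has $|w| < 1/(4L)$ and so the supremum in the first display is at least $\xi := 1/(2M)$. Integrating against $\delta_\eta$,
\begin{equation*}
F_\eta(r_1\theta) - F_\eta(r_0\theta) \;=\; \int\bigl[F(r_1\theta - y) - F(r_0\theta - y)\bigr]\,\delta_\eta(y)\,dy \;\geq\; \xi\,(r_1 - r_0),
\end{equation*}
which is the claimed bound. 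I expect no serious obstacle: the only technical point is the uniformity of $\xi$ in $\theta$, but this is automatic since $L$ and $M$ are taken over the compact set $\bar{\Delta}_+$, and one shrinks $U$ if necessary so that $z = r_0\theta - y$ remains in the cone where $F$ is defined.
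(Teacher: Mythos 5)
Your proof is correct, but it takes a genuinely different route from the paper's. The paper never uses convexity of $F$ inside the proof of the lemma: it writes $x_i-y$ in polar coordinates, splits the integrand into a radial difference (bounded below by roughly $\inf_U(1/f)\,(r_1-r_0)$, using only the explicit formula $F=r/f(\theta)$) and an angular error term, and controls the latter by the Lipschitz continuity of $f$ together with the geometric estimate $d(\theta_1(y),\theta_0(y))\leq O(\|y\|)\,|r_1-r_0|$, so the error is $O(\eta)|r_1-r_0|$ and is absorbed for small $\eta$. You instead bound the integrand pointwise from below, with no error term and no polar-coordinate bookkeeping, via the subgradient inequality along the ray direction plus Euler's identity $\langle p,z\rangle=F(z)$ for the degree-one homogeneous convex function $F$, combined with the bound $|p|\leq L$ coming from the uniform Lipschitz property of $F$ (Rockafellar, \cite{Ro}); this yields the clean constant $\xi=1/(2M)$. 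Both arguments need the same housekeeping you flag: shrink $U$ (and take $r_0,r_1$ close to $f(\theta)$, $\eta$ small relative to the support of $\delta$) so that the shifted points stay in the convex cone where $F$ is finite and $L$-Lipschitz, so subgradients exist there and the dilation argument behind Euler's identity applies. Two cosmetic points: take $M=\sup_U f$ rather than $\max_{\bar{\Delta}_+}f$ (since $\theta$ ranges over $U$), and keep track of the support radius of $\delta$ in the condition ``$\eta_0+\sigma<1/(4L)$''. The trade-off between the two proofs is worth noting: the paper's computation is convexity-agnostic, which is exactly why the concave case at the end of Section \ref{sec:pf-toric-nonsmooth} goes through verbatim with $F$ replaced by $-F$; your argument leans on convexity of $F$, so in the concave case it would need a one-line adjustment (apply the supergradient inequality at the upper endpoint $r_1\theta-y$ and use Euler's identity there), while in the convex case it is arguably the more transparent of the two.
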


As an immediate consequence of the lemma, we obtain the lower bound
\begin{equation}
  \label{eq:derivative}
\frac{\p F_\eta}{\p r}\geq \xi \textrm{ along } \Sigma_\eta.
\end{equation}
For instance, it is easy to see that if $f$ were smooth, the left hand
side of \eqref{eq:derivative} would be close to $\p F/ \p r=1/f$ and
for a small $\eta>0$ we could take as $\xi$ any constant below
$\inf (1/f)$.  We will show that in fact this is also true when $f$
is just continuous.

\begin{proof}[Proof of Lemma \ref{lemma:dir}]
  Set $x_1=(r_1,\theta)$ and $x_0=(r_0,\theta)$, and let $y$ be in
  $Y_\eta:=\supp \delta_\eta$. Define the functions $r_i(y)$ and
  $\theta_i(y)$ for $i=0,\, 1$ by $x_i-y=(r_i(y),\theta_i(y))$ in
  polar coordinates. Then
\begin{equation*}
  \begin{split}
    F_\eta(x_1)-F_\eta(x_0)
     &=\int\big( F(x_1-y)-F(x_0-y)\big) \delta_\eta(y)\, dy\\
    & \geq \int\big( F(r_1(y),\theta_0(y))- F(r_0(y),\theta_0(y))\big)
    \delta_\eta(y)\, dy\\
    & \quad - \int\big| F(r_1(y),\theta_1(y))- F(r_1(y),\theta_0(y))\big| \delta_\eta(y)\, dy.
  \end{split}
\end{equation*}
Our goal is to bound the first term from below and the second one term
from above. For the first term, we have
$$
\int\big( F(r_1(y),\theta_0(y))- F(r_0(y),\theta_0(y))\big)
\delta_\eta(y)\, dy\geq \xi_0(r_1-r_0)
$$
for some constant $\xi_0>0$ which depends only on $f$. For instance, 
we can take $\xi_0=\inf_U (1/f)$ after shrinking $U$ if necessary and
requiring $\eta>0$ to be sufficiently small.

To bound the second term from above, first note that 
\begin{equation*}
  \begin{split}
    \int\big| F(r_1(y),\theta_1(y))- F(r_1(y),\theta_0(y))\big|
    \delta_\eta(y)\, dy &=\int \left| \frac{r_1(y)}{f(\theta_1(y))}-
      \frac{r_1(y)}{f(\theta_0(y))}\right|\delta_\eta(y)\, dy\\
    &\leq C \cdot\sup_{Y_\eta}\big| f(\theta_1(y))-f(\theta_0(y))\big|
  \end{split}
\end{equation*}
since
$F(r,\theta)=r/f(\theta)$, where $Y_\eta=\supp \delta_\eta$. Here the constant $C$ is completely
determined by $f$ provided that $r_0$ and $r_1$ are sufficiently close
to $f(\theta)$ and $\eta>0$ is small. Denote by $\rho$ the distance in
$S^{n-1}$. A routine calculation shows that under the above
conditions
$$
d(\theta_1(y),\theta_0(y))\leq O(\|y\|)|r_1-r_0|.
$$
Since $f$ is Lipschitz, the second term is bounded from above by
$O(\eta)|r_1-r_0|$. Hence, we can take as $\xi$ any constant smaller
than $\xi_0$ whenever $r_0$ and $r_1$ are sufficiently close to
$f(\theta)$ uniformly in $\theta$ and $\eta>0$ is small. This
completes the proof of the lemma.
\end{proof}

By \eqref{eq:derivative}, $\Sigma_\eta$ is a smooth convex
hypersurface, lying in a small neighborhood of $\Sigma$. Moreover,
$\Sigma_\eta$ intersects every line through the origin at most
once since the restriction of
$F_\eta$ to a line is convex and $F_\eta(r,\theta)$ is close to
$F(0)=0$ when $r$ is close to zero.  Hence,
while $\Sigma$ is given by the equation $r=f(\theta)$, the
hypersurface $\Sigma_\eta$ is given by the equation $r=f_\eta(\theta)$
for some smooth function $f_\eta$ on $U$, which is $C^0$-close to
$f$. This follows from the inverse function theorem. In addition, the
second fundamental form of $\Sigma_\eta$ is negative semi-definite,
due to the fact $F_\eta$ is smooth and convex.

The gradient of $f_\eta$ is bounded from above
by a constant completely determined by $f$, provided again that
$\eta>0$ is small. Namely,
$$
\|\nabla f_\eta\|\leq L/\xi,
$$
where $L$ is the Lipschitz constant of $F$ and hence also of
$F_\eta$. To see this, we simply differentiate the identity
$F_\eta(f_\eta(\theta),\theta)=1$ with respect to $\theta$ and use
\eqref{eq:derivative}.

Our next goal is to show that $m$ from \eqref{eq:m} for $f_\eta$ is
bounded away from 0 by a constant completely determined by $f$
provided that $\eta>0$ is sufficiently small. By Remark \ref{rmk:m}
this will imply that the number of generators in the symplectic
homology complex for $f_\eta$ is bounded from above by
$C_n s^n+C_0$ with $C_n$ and $C_0$ independent of $\eta$ and
completely determined by $f$. Then the theorem will follow from the
definition of $\fb_\eps(s)$ for $W_f$; see \eqref{eq:fb-C0}
and \eqref{eq:fb-C01}. 

By \eqref{eq:m-f} and \eqref{eq:m} applied to $\tf=f_\eta$ we have for
$\eta>0$ sufficiently small
$$
m:=\min m_\Delta\geq \min_{\bar{\Delta}_+}
\frac{f_\eta}{\sqrt{1+\|\nabla f_\eta\|^2/f_\eta ^2}}\geq \frac{\min
  f}{2\sqrt{1+L^2/(\xi\max f)^2}}
$$
with the ultimate lower bound independent of the smooth approximation
$f_\eta$. As in Remark \ref{rmk:m}, replacing $m$ by this lower bound
we obtain an upper bound $ C_n(\Omega) s^n + C_0(\Omega)$ on the
growth rate of the number of generators, and hence $\fb_\eps(s)$,
with constants $C_n(\Omega)$ and $C_0(\Omega)$ completely determined
by $\Omega=\Omega_f$ and independent of $s$ and $\eps>0$.

By construction, the hypersurface $\Sigma_\eta$ is convex, but not
necessarily strictly convex. However, it can be approximated by
strictly convex hypersurfaces without changing the lower bound on $m$
or just decreasing it by an arbitrarily small amount. To this end, we can
simply add a sufficiently small positive definite quadratic form to
$F_\eta$. This concludes the proof of Theorem
\ref{sec:pf-toric-nonsmooth}.

The argument goes through for concave domains with straightforward
modifications in the setup. Namely, as above we extend $f$ to a small
neighborhood $U$ of $\bar{\Delta}_+$. Then the function $F$ is
replaced by $-F$ and the truncated cone $\CC$ is replaced by the
intersection of the set
$\{(r,\theta)\mid r\geq f(\theta),\, \theta\in U\}$ with a sufficiently
large ball. \qed

\end{document}